\documentclass[a4paper, reqno]{amsart}

\usepackage{Settings}

\title{Classification of objects in the singularity categories \\ of rational double points in arbitrary characteristics}

\author{Yuta Takashima}
\address{(Yuta TAKASHIMA) Department of Mathematical Sciences, Graduate School of Science,
Tokyo Metropolitan University, 1-1 Minamiosawa, Hachioji, Tokyo 192-0393, Japan.}
\email{yuta.takashima.m@gmail.com}

\subjclass[2020]{
    Primary~14F08; 
    Secondary~14B05, 
    16G70, 
    13C14 
}

\keywords{
    singularity category,
    rational double point,
    Auslander--Reiten theory,
    matrix factorization,
    maximal Cohen--Macaulay module
}

\begin{document}

\begin{abstract}
    We study rational double points over algebraically closed fields in arbitrary characteristics
    and completely classify the indecomposable objects in their singularity categories, which correspond to
    the vertices in their Auslander--Reiten quivers.
    Along the way, we present an alternative proof determining the configuration of these Auslander--Reiten quivers,
    and provide methods to handle the homotopy categories
    of matrix factorizations of isolated hypersurface singularities with computer algebra systems.
\end{abstract}

\maketitle

\setcounter{tocdepth}{2}
\tableofcontents

\section{Introduction}
Let $X$ be a finite-dimensional separated Noetherian scheme over an algebraically closed field $k$ such that
any coherent sheaf on $X$ is a quotient of a locally free sheaf of finite rank.
The \emph{singularity category} $\DD{sg} (X)$
is defined to be the Verdier quotient of the derived category $\DD{b} (\Coh X)$
by the full subcategory $\Perf (X)$ of perfect complexes.
Since $\DD{sg} (X)$ is trivial if and only if $X$ is smooth, 
$\DD{sg} (X)$ can be thought of what measures complexity of singularities.
Rather than $\DD{sg} (X)$ itself,
we are interested in the \emph{idempotent completion} $\overline{\DD{sg} (X)}$ (\cite{MR1813503}).
In fact, two schemes $X$ and $Y$
whose formal completions $\widehat{X}$ and $\widehat{Y}$ along their singular loci are isomorphic may have
non-equivalent singularity categories $\DD{sg} (X)$ and $\DD{sg} (Y)$,
whereas their idempotent completions $\overline{\DD{sg} (X)}$ and $\overline{\DD{sg} (Y)}$
are $k$-linear triangulated equivalent (\cite[Theorem~2.10]{MR2735755}).
If $X$ has only one isolated Gorenstein singular point $p \in X$, the idempotent completion $\overline{\DD{sg} (X)}$
turns out to be triangulated equivalent to $\DD{sg} (\widehat{\strsh}_{X, p})$
(\cite[Proposition~2.7]{MR2735755}, \cite[Proposition~A.1]{MR2776613} and \cite[Lemma~5.6]{MR2824483}).

In this paper,
we study the singularity category $\DD{sg} (\widehat{\strsh}_{X, p})$ of a rational double point $(X, p)$.
Let $S \coloneqq k \exdbra{x, y, z}$ and $f$ be the defining polynomial: $\widehat{\strsh}_{X, p} \cong S / \exgen{f}$.
As is well known ({\cite[Chapter~4]{MR4390795}} and \cite[Theorem~6.1]{MR570778}),
the singularity category $\DD{sg} (\widehat{\strsh}_{X, p})$ is $k$-linear triangulated equivalent
to the stable category $\sMCM(\widehat{\strsh}_{X, p})$ of maximal Cohen--Macaulay $\widehat{\strsh}_{X, p}$-modules
and the homotopy category $\HMF_S (f)$ of matrix factorizations.
Since these categories are $k$-linear Krull--Schmidt categories,
it is important to classify their indecomposable objects.
In the case that the characteristic of $k$ is zero,
Auslander has constituted a complete set of pairwise non-isomorphic 
indecomposable maximal Cohen--Macaulay $\widehat{\strsh}_{X, p}$-modules
and determined the Auslander--Reiten quiver in \cite{MR0816307}.
Furthermore, the corresponding indecomposable matrix factorizations are explicitly calculated
in \cite[Chapter~9, \S4]{MR2919145}.
Both arguments depend crucially on the fact
that rational double points in characteristic $0$ are quotient singularities.
In contrast to the characteristic $0$ case, rational double points in positive characteristics
are not quotient singularities in general (\cite[Section~3.9]{MR4996323}).
While it has been shown in \cite[Sections~2 and 3]{MR4788721} that Auslander's work mentioned above holds
for rational double points in positive characteristics
which are quotient singularities by linearly reductive finite group schemes (see \cite[Section~2.2]{MR4996323}),
the remaining cases have yet to be investigated.
This naturally leads us to ask how to find the indecomposable objects
in the singularity category $\DD{sg} (\widehat{\strsh}_{X, p})$ in arbitrary characteristic.
Our main result is the following theorem.

\begin{thm}[{\cref{thm:Main1,thm:Main2}}]
    \label{thm:Main}
    \begin{items}
        \item The matrix factorizations $\M_1, \M_2, \ldots, \M_n$ of type $A_n$ in \cref{subsec:A_n}
        constitute a complete set of pairwise non-isomorphic indecomposable objects in $\HMF_S (f)$ of type $A_n$,
        and the Auslander--Reiten quiver is given by
        \[ 
\begin{tikzcd}[ampersand replacement=\&]
    \M_1 \ar[shift left=0.4ex]{r} \ar[out=225,in=315,loop, dashed] \&
        \M_2 \ar[shift left=0.4ex]{r} \ar[shift left=0.4ex]{l} \ar[out=225,in=315,loop, dashed] \&
            \cdots \ar[shift left=0.4ex]{r} \ar[shift left=0.4ex]{l} \&
                \M_n\rlap{.} \ar[shift left=0.4ex]{l} \ar[out=225,in=315,loop, dashed]
\end{tikzcd}
 \]
        \item The matrix factorizations $\M_1, \M_2, \ldots, \M_{2n}$ of type $D_{2n}^r$ in \cref{subsec:D_even^r}
        constitute a complete set of pairwise non-isomorphic indecomposable objects in $\HMF_S (f)$ of type $D_{2n}^r$,
        and the Auslander--Reiten quiver is given by
        \[ 
\begin{tikzcd}[ampersand replacement=\&, row sep=tiny]
    \&
        \&
            \&
                \&
                    \M_{2n - 1} \ar[shift left=0.4ex]{dl} \ar[out=225,in=315,loop, dashed] \\
    \M_1 \ar[shift left=0.4ex]{r} \ar[out=225,in=315,loop, dashed] \&
        \M_2 \ar[shift left=0.4ex]{r} \ar[shift left=0.4ex]{l} \ar[out=225,in=315,loop, dashed] \&
            \cdots \ar[shift left=0.4ex]{r} \ar[shift left=0.4ex]{l} \&
                \M_{2n - 2} \ar[shift left=0.4ex]{l} \ar[shift left=0.4ex]{ur} \ar[shift left=0.4ex]{dr} \ar[out=225,in=315,loop, dashed] \&
                    \\
    \&
        \&
            \&
                \&
                    \M_{2n}\rlap{.} \ar[shift left=0.4ex]{ul} \ar[out=225,in=315,loop, dashed]
\end{tikzcd}
 \]
        \item The matrix factorizations $\M_1, \M_2, \ldots, \M_{2n + 1}$ of type $D_{2n + 1}^r$ in \cref{subsec:D_odd^r}
        constitute a complete set of pairwise non-isomorphic indecomposable objects in $\HMF_S (f)$ of type  $D_{2n + 1}^r$,
        and the Auslander--Reiten quiver is given by
        \[ 
\begin{tikzcd}[ampersand replacement=\&, row sep=tiny]
    \&
        \&
            \&
                \&
                    \M_{2n} \ar[shift left=0.4ex]{dl} \ar[out=225,in=315,loop, dashed] \\
    \M_1 \ar[shift left=0.4ex]{r} \ar[out=225,in=315,loop, dashed] \&
        \M_2 \ar[shift left=0.4ex]{r} \ar[shift left=0.4ex]{l} \ar[out=225,in=315,loop, dashed] \&
            \cdots \ar[shift left=0.4ex]{r} \ar[shift left=0.4ex]{l} \&
                \M_{2n - 1} \ar[shift left=0.4ex]{l} \ar[shift left=0.4ex]{ur} \ar[shift left=0.4ex]{dr} \ar[out=225,in=315,loop, dashed] \&
                    \\
    \&
        \&
            \&
                \&
                    \M_{2n + 1}\rlap{.} \ar[shift left=0.4ex]{ul} \ar[out=225,in=315,loop, dashed]
\end{tikzcd}
 \]
        \item The matrix factorizations $\M_1, \M_2, \ldots, \M_n$ of type $E_n^r$ in \crefrange{subsec:E_6^r}{subsec:E_8^rInChar2}
        constitute a complete set of pairwise non-isomorphic indecomposable objects in $\HMF_S (f)$ of type $E_n^r$,
        and the Auslander--Reiten quiver is given by
        \[ 
\begin{tikzcd}[ampersand replacement=\&]
    \&
        \&
            \M_n \ar[shift left=0.4ex]{d} \ar[out=315,in=45,loop, dashed] \&
                \&
                    \&
                        \\
    \M_1 \ar[shift left=0.4ex]{r} \ar[out=225,in=315,loop, dashed] \&
        \M_2 \ar[shift left=0.4ex]{r} \ar[shift left=0.4ex]{l} \ar[out=225,in=315,loop, dashed] \&
            \M_3 \ar[shift left=0.4ex]{r} \ar[shift left=0.4ex]{l} \ar[shift left=0.4ex]{u} \ar[out=225,in=315,loop, dashed] \&
                \M_4 \ar[shift left=0.4ex]{r} \ar[shift left=0.4ex]{l} \ar[out=225,in=315,loop, dashed] \&
                    \cdots \ar[shift left=0.4ex]{r} \ar[shift left=0.4ex]{l} \&
                        \M_{n - 1}\rlap{.} \ar[shift left=0.4ex]{l} \ar[out=225,in=315,loop, dashed]
\end{tikzcd}
 \]
    \end{items}
\end{thm}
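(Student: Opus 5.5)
The plan is to work entirely in $\HMF_S(f)$, a $k$-linear Hom-finite Krull--Schmidt triangulated category. It is $2$-Calabi--Yau: $\tau\cong\Sigma$ and $\Sigma^2\cong\mathrm{id}$, and since $\dim S/\langle f\rangle=2$ we in fact have $\tau\cong\mathrm{id}$ on objects. This is why every vertex of the quivers in the statement carries a dashed loop. The argument then reduces to a finite-type recognition principle. Suppose we can find a finite set of indecomposables $\M_1,\dots,\M_m$ that is closed under the AR-translate (automatic here) and such that every irreducible morphism starting or ending at some $\M_i$ has its other end again among the $\M_j$. Then this set forms a connected component of the AR quiver. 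Because $\HMF_S(f)$ is connected, that component is the whole quiver. I would invoke the standard result (Auslander's theorem, or the triangulated version of Happel/Xiao--Zhu for Hom-finite Krull--Schmidt categories with Serre duality and a finite component) to conclude that the $\M_i$ exhaust the indecomposables.

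Concretely, for each type and each characteristic-dependent normal form of $f$ (Artin's list $A_n$, $D_N^r$, $E_n^r$), I would proceed in four steps.

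\textbf{Step 1: indecomposability.} Show that each explicit matrix factorization $\M_i=(\varphi_i,\psi_i)$ from the relevant subsection is indecomposable and that the $\M_i$ are pairwise non-isomorphic. I would do this by computing $\operatorname{End}(\M_i)$ in $\HMF_S(f)$, namely graded-commutative endomorphisms modulo null-homotopic ones, and checking that it is local. For non-isomorphism, comparing ranks and the ideals of entries (Fitting ideals) modulo $\mathfrak m$ should suffice.

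\textbf{Step 2: AR triangles.} For each $i$, construct explicitly the AR triangle
\[ \M_i \to \textstyle\bigoplus_{j \sim i} \M_j \to \M_i \to \Sigma\M_i, \]
where $j\sim i$ runs over the neighbours of $i$ in the displayed Dynkin diagram. Write down the two maps as block matrices and verify that their mapping cone is the middle term. This is the step suited to computer algebra. To prove that the triangle is almost split, it suffices to check two things. First, the connecting morphism $\M_i\to\Sigma\M_i$ is nonzero and lies in the socle of $\operatorname{Hom}(\M_i,\Sigma\M_i)$ as an $\operatorname{End}(\M_i)$-module. Second, by $2$-CY duality, $\operatorname{Hom}(\M_i,\Sigma\M_i)\cong D\operatorname{End}(\M_i)$, so the socle is one-dimensional. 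The socle element is then forced up to scalar, and the triangle it determines is the AR triangle.

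\textbf{Step 3: closure.} The middle terms computed in Step 2 consist only of the $\M_j$, so the set $\{\M_i\}$ is closed under irreducible maps. The arrows are exactly those of the displayed quivers, doubled because $\tau=\mathrm{id}$.

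\textbf{Step 4: conclusion.} Apply the finite-component theorem to obtain completeness. As a consistency check, the additive function $i\mapsto\operatorname{rank}\M_i$ should satisfy $2\operatorname{rank}\M_i=\sum_{j\sim i}\operatorname{rank}\M_j$, up to the contribution of $S$ at the extended vertex. This recovers the multiplicities of the highest root, as in McKay correspondence.

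\textbf{Main obstacle.} I expect the hardest part to be Step 2 in small characteristics ($p=2,3,5$ for the $E$-types, $p=2$ for $D$). There the normal forms $D_N^r$ and $E_n^r$ carry extra terms, such as $z^2+x^2y+xy^n+xy^{n-r}z$, and the naive characteristic-zero matrices no longer factor $f$. The workable plan is to deform the characteristic-zero factorizations by adding the terms involving $r$, and to compute endomorphism rings via a Gröbner/standard basis over $k\exdbra{x,y,z}$. For this, a $\mathfrak m$-adic truncation is legitimate because the singularity is isolated, so $\operatorname{Hom}$ spaces are annihilated by a power of $\mathfrak m$. Once the socle of $\operatorname{Hom}(\M_i,\Sigma\M_i)$ is located by this finite linear algebra, Steps 3 and 4 are formal.
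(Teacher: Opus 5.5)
You have a genuine gap in parts (1)--(3). For the $E$-types your plan is essentially the paper's proof of \cref{thm:Main1}: build each Auslander--Reiten triangle by computer from a nonzero socle element of $\Hom(\M,\tau(\M)[1])$, and split off trivial and already-known summands.

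Parts (1)--(3), however, concern infinitely many $n$, and for $D_N^r$ in characteristic $2$ every $1\le r<n$. Your Steps 1 and 2 rest on Gr\"obner-basis computations, which can only treat finitely many $(n,r)$. Those steps are: proving $\End(\M_i)$ local, locating the socle of $\Hom(\M_i,\Sigma\M_i)$ over $\End(\M_i)$ (spaces whose dimension grows with $n$), and checking that the middle term of the resulting triangle is $\bigoplus_{j\sim i}\M_j$ up to trivial summands. You give no argument uniform in $n$ and $r$, and that uniform computation would be the whole content of the proof. So as written the proposal does not establish (1)--(3). (The same objection applies to the $E$-types over the infinitely many characteristics $p\geq 7$; the paper sets these aside at the start of the proof of \cref{thm:Main1} via the quotient-singularity theory.)

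The missing idea is the paper's geometric route, which computes no AR triangle for these families.
\begin{itemize}
\item Each $M_i=\Coker\M_i$ embeds in $R^{r_i}$ with $r_i=\rk_R M_i$, so $\det\widetilde{M_i}\cong\strsh_{\widetilde X}(-Z_i)$ by \cref{lem:IdealSh}.
\item The multiplicity of a curve in $Z_i$ is the valuation of $\Im(\det M_i\to R)$ along it. This is computed uniformly in $n$ and $r$ in \cref{prop:HardWork}.
\item Verifying $-Z_i\cdot E_j=\delta_{i,j}$ identifies $M_i$ with the indecomposable attached to $E_i$ in \cref{prop:AV}. This gives indecomposability, pairwise non-isomorphism and completeness at once.
\item The arrows come from \cref{thm:IrrConfig}, $\rk_k \srad_R(M_i,M_j)/\srad^2_R(M_i,M_j)=2\delta_{i,j}+E_i\cdot E_j$, proved via Van den Bergh's tilting bundle (\cref{prop:VdB}) and Riemann--Roch.
\end{itemize}

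Two smaller points. First, your non-isomorphism test in Step 1 already fails in type $A_n$. $\M_i$ and $\M_{n+1-i}$ both have rank $1$ and the same ideal of entries $(x,y,z^{\min(i,n+1-i)})$, hence identical Fitting ideals. Yet $\Coker\M_i\cong(y,z^i)$ and $\Coker\M_{n+1-i}\cong(y,z^{n+1-i})$ are non-isomorphic for $2i\neq n+1$, since their classes in $\Cl(R)\cong\ZZ/(n+1)$ are $i$ and $-i$. You need a genuine isomorphism test or a finer invariant such as $c_1(\widetilde M)$.

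Second, for $\dim R=2$ the category $\sMCM(R)$ is $1$-Calabi--Yau, not $2$-Calabi--Yau. The Serre functor is $\Sigma$ and $\tau=[\dim R-2]=\mathrm{id}$, whereas $\Sigma\not\cong\mathrm{id}$ on objects (in type $A_n$, $\Sigma\M_i\cong\M_{n+1-i}$). The duality $\Hom(\M,\Sigma\M)\cong D\End(\M)$ that you actually use is the correct $1$-Calabi--Yau statement, so Step 2 survives, but you should drop the claim $\tau\cong\Sigma$.
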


Here, types $D_n$ $(n \geq 4)$, $E_6$, $E_7$ and $E_8$
in Artin's classification (\cite[Section~3]{MR0450263}) of rational double points
are also denoted by $D_n^0$, $E_6^0$, $E_7^0$ and $E_8^0$ respectively.
\cref{thm:Main} is proved in \cref{sec:MainResults} with the aid of the computer algebra system \Singular{} (\cite{DGPS}).
In preparation for this, \cref{sec:MF} provides methods for computationally handling matrix factorizations
of isolated hypersurface singularities.
While the proof for types $E_n^r$ is exhausted by running a computer program
since $n$ and $r$ can only take finitely many values,
the other types require us to computationally find a candidate for
a complete set of pairwise non-isomorphic indecomposable objects in $\HMF_S (f)$,
and then rigorously prove its validity by applying the Auslander--Reiten theory of maximal Cohen--Macaulay modules,
which is summarized in \cref{sec:AR}.
Although it is known by \cite[Theorem~1]{MR0887498}
that the configuration of the Auslander--Reiten quiver of $\sMCM(\widehat{\strsh}_{X, p})$ is the double quiver
of the dual resolution graph of the rational double point $(X, p)$,
our argument in \cref{sec:AR} yields an alternative proof of this fact.

As an application of \cref{thm:Main}, using a computer algebra system,
we can determine whether the singularity category of a given rational double point is standard or not.
This topic will be addressed in detail in a forthcoming paper.

\begin{cvn}
    \begin{items}
        \item $k$ denotes an algebraically closed field.
        \item Any functor between $k$-linear categories is assumed to be $k$-linear.
        \item Let $S$ be a commutative ring, and $m$ and $n$ positive integers.
        $M_{m, n} (S)$ stands for the set of $m \times n$ matrices with entries in $S$.
        In particular when $m = n$, we simply write $M_n (S)$ for $M_{m, n} (S)$,
        and $I_n$ for the identity matrix. 
        Let $X \in M_{m, n} (S)$ be a matrix partitioned into row vectors as
        \[
            X =
            \begin{pmatrix}
                \bm{x}_1 \\
                \bm{x}_2 \\
                \vdots \\
                \bm{x}_m
            \end{pmatrix}.
        \]
        $\trans{X}$ is the transpose of $X$ and $\bm{v}(X)$ denotes
        $
            \trans{
                \begin{pmatrix}
                    \bm{x}_1 & \bm{x}_2 & \cdots & \bm{x}_n
                \end{pmatrix}
            }
        $. Also,
        \begin{align*}
            &A \otimes B \coloneqq 
            \begin{pmatrix}
                a_{1, 1}B & a_{1, 2}B & \cdots & a_{1, n}B \\
                a_{2, 1}B & a_{2, 2}B & \cdots & a_{2, n}B \\
                \vdots & \vdots & \ddots & \vdots \\
                a_{m, 1}B & a_{m, 2}B & \cdots & a_{m, n}B \\
            \end{pmatrix}
            & &\text{for $A = (a_{i, j})_{i, j} \in M_{m, n}(S)$ and $B \in M_{p, q}(S)$.}
        \end{align*}
        \item Let $\D$ be a $k$-linear category.
        The $k$-vector space of radical morphisms from one object $M$ to another $N$ is defined to be
        \begin{align*}
            \rad_{\D} (M, N)
            &\coloneqq \inset{\phi \in \Hom_{\D}(M, N)}
            {\text{$\id_M - \psi \circ \phi$ is isomorphic for any $\psi \in \Hom_{\D}(N, M)$}} \\
            &= \inset{\phi \in \Hom_{\D}(M, N)}
            {\text{$\id_N - \phi \circ \psi$ is isomorphic for any $\psi \in \Hom_{\D}(N, M)$}}.
        \end{align*}
        Also,
        \begin{align*}
            \rad^n_\D (M, N) &\coloneqq
            \ingen{\phi \in \Hom_\D (M, N)}{\phi \text{ is a composition of $n$ radical morphisms}}_k
                & &\text{for $n \geq 1$.}
        \end{align*}
        In particular when $\D$ is the category $\MCM (R)$ (resp. the stable category $\sMCM (R)$)
        of maximal Cohen--Macaulay modules over a commutative Noetherian local $k$-algebra $R$,
        we simply write $\rad_R$ (resp. $\srad_R$) for $\rad_\D$.
        \item Let $A$ be a ring.
        $\mod A$ stands for the category of finitely generated right $A$-modules.
        $M^\vee$ denotes the dual $\Hom_A (M, A)$ of a right $A$-module $M$.
        \item Let $X$ be a scheme.
        $\mathcal{M}^\vee$ denotes the dual $\shHom_X (\mathcal{M}, \strsh_X)$ of an $\strsh_X$-module $\mathcal{M}$.
    \end{items}
\end{cvn}

\begin{ack}
    The author would like to express his gratitude to Professor Hokuto Uehara for helpful discussions.
    Thanks are also due to the contributors of the computer algebra system \Singular{} (\cite{DGPS}),
    which was used for calculating matrix factorizations.
\end{ack}

\section{Auslander--Reiten theory}
\label{sec:AR}
Let $(X, p)$ be a quasi-projective $k$-variety $X$ with only one isolated Gorenstein singular point $p$,
and $R$ denote $\widehat{\strsh}_{X, p}$.
In this section, we discuss an approach to studying the singularity category $\DD{sg} (R)$
via the Auslander--Reiten theory of maximal Cohen--Macaulay $R$-modules
(see \cite{MR1079937}, \cite{MR2484725}, \cite{MR2919145} and \cite{MR2856194}).

\begin{thm}[{\cite[Chapter~4]{MR4390795}}]
    \label{thm:MCMToDsg}
    \begin{items}
        \item \label{thm:MCMToDsg:1} The category $\MCM (R)$ of maximal Cohen--Macaulay $R$-modules
        is a Frobenius subcategory of $\mod R$.
        \item \label{thm:MCMToDsg:2} The functor
        \[ \sMCM (R) \to \DD{sg} (R) \]
        induced by the composition
        $\MCM (R) \hookrightarrow \mod R \to \DD{sg} (R)$
        is a well-defined triangulated equivalence.
    \end{items}
\end{thm}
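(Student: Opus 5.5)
This is Buchweitz's theorem, and I would prove the two parts in turn, using throughout that $R$ is a complete Gorenstein local ring of Krull dimension $d$ (here $d = 2$). The basic tool is local duality. Over a Gorenstein ring, a finitely generated module $M$ is maximal Cohen--Macaulay if and only if $\Ext^i_R(M, R) = 0$ for all $i > 0$. Moreover, $M \mapsto M^\vee$ is an exact duality on $\MCM(R)$ with $M \cong M^{\vee\vee}$.

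For \ref{thm:MCMToDsg:1}, three things need checking.
\begin{itemize}
\item \emph{$\MCM(R)$ is an exact subcategory of $\mod R$.} It is closed under extensions, by the depth lemma or by the Ext-characterization above.
\item \emph{Projectives and injectives.} Free modules are maximal Cohen--Macaulay and satisfy $\Ext^1_R(M, R) = 0$ for every $M \in \MCM(R)$, so they are both projective and injective in the exact category. Conversely, any projective or injective object is a direct summand of a free module, hence free.
\item \emph{Enough projectives and injectives.} Enough projectives comes from a free cover $0 \to \Omega M \to F \to M \to 0$, where $\Omega M$ is again maximal Cohen--Macaulay by the depth lemma. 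For enough injectives, take a free cover $G \to M^\vee$ with kernel $N$ and dualize $0 \to N \to G \to M^\vee \to 0$. Since $\Ext^1(M^\vee, R) = 0$, this gives an exact sequence $0 \to M \to G^\vee \to N^\vee \to 0$ whose terms lie in $\MCM(R)$.
\end{itemize}
This proves that $\MCM(R)$ is Frobenius. By Happel's theorem, $\sMCM(R)$ is then triangulated, with shift given by the cosyzygy $\Omega^{-1}$.

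For \ref{thm:MCMToDsg:2}, I first check that the functor is well defined. The composite $\MCM(R) \to \DD{sg}(R)$ kills free modules, and any map factoring through a free module becomes zero in $\DD{sg}(R)$, so the composite descends to $\sMCM(R)$.

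Next, exactness. A conflation $0 \to L \to M \to N \to 0$ gives a triangle in $\DD{b}(\mod R)$. Applied to $0 \to M \to G \to \Omega^{-1} M \to 0$ with $G$ free, this triangle shows that $\Omega^{-1}M \cong M[1]$ in $\DD{sg}(R)$. Using this, one checks that Happel's standard triangles map to triangles in $\DD{sg}(R)$.

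Essential surjectivity comes next. Every complex in $\DD{b}(\mod R)$ is isomorphic in $\DD{sg}(R)$ to a shift of a module: truncate a projective resolution far to the left and use that perfect complexes vanish. Since $R$ has finite Krull dimension, $\Omega^d N$ is maximal Cohen--Macaulay for every module $N$, and $N \cong \Omega^d N[d]$ in $\DD{sg}(R)$. Finally, $\Omega^d N[d] \cong \Omega^{-d}\Omega^d N$ in $\DD{sg}(R)$, and this object lies in the image.

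The main obstacle is full faithfulness, that is, showing that
\[ \underline{\Hom}_R(M, N) \to \Hom_{\DD{sg}(R)}(M, N) \]
is bijective for $M, N \in \MCM(R)$. A morphism in $\DD{sg}(R)$ is a roof $M \leftarrow C \to N$ whose cone is perfect. I would replace $M$ by its complete resolution, an acyclic complex of free modules, and use the vanishing $\Hom_{\DD{b}}(P, N[i]) = 0$ for perfect $P$ and large $i$, which holds because $\Ext^{>0}(-, N)$ vanishes against the relevant syzygies. Combining this with the natural isomorphism
\[ \underline{\Hom}_R(M, N) \cong \Hom_{\DD{sg}}(M, N), \]
which I would obtain as a colimit of
\[ \Hom_{\DD{b}}(\Omega^n M, \Omega^n N) \]
along the identifications $\Omega^n M[n] \cong M$, should yield both injectivity and surjectivity. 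Concretely, given a roof, I would push it far enough along syzygies that the perfect cone can no longer interact with $N$, and then read off an honest module map modulo maps factoring through free modules. This is the step where the careful bookkeeping of Buchweitz's argument is needed.
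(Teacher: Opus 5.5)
You handle part (i), well-definedness, exactness and essential surjectivity correctly, along the standard lines of Buchweitz's proof, which the paper cites without giving an argument. (At this point in the paper $R$ is any isolated Gorenstein singularity, so $d=2$ is not assumed, but nothing in your argument uses it.)

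The gap is full faithfulness. This is the real content of the theorem, and your sketch does not work as written, for three reasons.
\begin{itemize}
\item It is circular: you propose to combine the degree vanishing with the isomorphism $\underline{\Hom}_R(M,N)\cong\Hom_{\DD{sg}(R)}(M,N)$, which is exactly the claim.
\item The colimit you would use to obtain that isomorphism is not well defined. The groups $\Hom_{\DD{b}}(\Omega^nM,\Omega^nN)=\Hom_R(\Omega^nM,\Omega^nN)$ carry no canonical transition maps, because syzygies of morphisms are only defined modulo maps through free modules. Even the correct colimit of stable Homs is, when identified with $\Hom_{\DD{sg}(R)}$, essentially the statement being proved.
\item The vanishing $\Hom_{\DD{b}}(P,N[i])=0$ for $i\gg0$ holds for every module $N$, simply because $P$ is a bounded complex of free modules. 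So it cannot be where the Gorenstein/MCM hypothesis enters, and full faithfulness fails in general without that hypothesis.
\end{itemize}

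The missing ingredient is the following lemma. If $\Ext^{>0}_R(M,R)=0$ and $Q$ is a bounded complex of finitely generated free modules, then the natural map $H^0\Hom_R(M,Q^\bullet)\to\Hom_{\DD{b}}(M,Q)$ is bijective. In other words, morphisms from an MCM module to a perfect complex are honest chain maps. To prove it, induct on the length of $Q$ using brutal truncations and the five lemma. For a stalk complex $F[i]$, both sides equal $\Hom_R(M,F)$ if $i=0$ and vanish otherwise, using $\Ext^{>0}_R(M,F)=0$. With this lemma both halves are short.

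\emph{Faithfulness.} Suppose a module map $\alpha\colon M\to N$ vanishes in $\DD{sg}(R)$. Then it factors in $\DD{b}(\mod R)$ as $M\to Q\to N$ with $Q$ perfect. By the lemma the first arrow is given by some $\phi\colon M\to Q^0$. Since $Q$ is a bounded complex of projectives, the second arrow is given by some $\psi\colon Q^0\to N$. Hence $\alpha=\psi\phi$ factors through the free module $Q^0$.

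\emph{Fullness.} Write a morphism as $t^{-1}g$, with $g\colon M\to C$ and a triangle $N\xrightarrow{t}C\to P\xrightarrow{w}N[1]$ where $P$ is perfect.
\begin{itemize}
\item Let $G$ be a free resolution of $N$ truncated to degrees $[-m+1,0]$. This gives a triangle $G\to N\xrightarrow{\pi}\Omega^mN[m]\to G[1]$ with $\pi$ invertible in $\DD{sg}(R)$.
\item For $m\gg0$, your degree vanishing gives $\Hom_{\DD{b}}(P,\Omega^mN[m+1])=0$. So $\pi=\rho t$ for some $\rho$, and hence $t^{-1}g=\pi^{-1}\rho g$.
\item By the lemma, $\Hom_{\DD{b}}(M,G[1])=H^1\Hom_R(M,G^\bullet)=0$. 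So $\rho g=\pi\alpha$ for some module map $\alpha\colon M\to N$, and therefore $t^{-1}g=\alpha$ in $\DD{sg}(R)$.
\end{itemize}
This is the bookkeeping your last paragraph leaves open, and it is where the hypothesis $\Ext^{>0}_R(M,R)=0$ is genuinely used.
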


\begin{rem}
    Both $\MCM (R)$ and $\sMCM (R)$ are $k$-linear Krull--Schmidt categories
    since $R$ is a complete local $k$-algebra,
    and the latter is $\Hom$-finite by \cite{MR0842486}.
\end{rem}

\begin{thm}[{{\cite[Section~3]{MR0450263}} and \cite[Theorem~1.4]{MR1033443}}]
    \label{thm:SimpleSing}
    The following statements are equivalent\textup{:}
    \begin{items}
        \item \label{thm:SimpleSing:1} $(X, p)$ is a rational double point\textup{;}
        \item \label{thm:SimpleSing:2} $(X, p)$ is a two dimensional hypersurface singularity
        and, up to isomorphism, the number of indecomposable objects in $\MCM(R)$ is finite\textup{;}
        \item \label{thm:SimpleSing:3} $R$ is isomorphic to $k\exdbra{x, y, z} / \exgen{f}$
        with $f$ one of the polynomials listed in \cref{tab:RDPEq}.
        
\begin{longtable}{llll}
    \caption{Defining polynomials of rational double points.}
    \label{tab:RDPEq}
    \endfirsthead
    \multicolumn{4}{l}{\underline{Characteristic $0$ or not less than $7$}} \\ 
    & $A_n$ & $z^{n+1}+xy$ & for $n \ge 1$ \\
    & $D_{2n}$ & $z^2+x^2y+xy^n$ & for $n \ge 2$ \\
    & $D_{2n+1}$ & $z^2+x^2y+y^nz$ & for $n \ge 2$ \\
    & $E_6$ & $z^2+x^3+y^2z$ & \\
    & $E_7$ & $z^2+x^3+xy^3$ & \\
    & $E_8$ & $z^2+x^3+y^5$ & \\
    & & & \\
    \multicolumn{4}{l}{\underline{Characteristic $5$}} \\ 
    & $A_n$ & $z^{n+1}+xy$ & for $n \ge 1$ \\
    & $D_{2n}$ & $z^2+x^2y+xy^n$ & for $n \ge 2$ \\
    & $D_{2n+1}$ & $z^2+x^2y+y^nz$ & for $n \ge 2$ \\
    & $E_6$ & $z^2+x^3+y^2z$ & \\
    & $E_7$ & $z^2+x^3+xy^3$ & \\
    & $E^0_8$ & $z^2+x^3+y^5$ & \\
    & $E^1_8$ & $z^2+x^3+y^5+xy^4$ & \\
    & & & \\
    \multicolumn{4}{l}{\underline{Characteristic $3$}} \\ 
    & $A_n$ & $z^{n+1}+xy$ & for $n \ge 1$ \\
    & $D_{2n}$ & $z^2+x^2y+xy^n$ & for $n \ge 2$ \\
    & $D_{2n+1}$ & $z^2+x^2y+y^nz$ & for $n \ge 2$ \\
    & $E^0_6$ & $z^2+x^3+y^2z$ & \\
    & $E^1_6$ & $z^2+x^3+y^2z+xyz$ & \\
    & $E^0_7$ & $z^2+x^3+xy^3$ & \\
    & $E^1_7$ & $z^2+x^3+xy^3+x^2y^2$ & \\
    & $E^0_8$ & $z^2+x^3+y^5$ & \\
    & $E^1_8$ & $z^2+x^3+y^5+x^2y^3$ & \\
    & $E^2_8$ & $z^2+x^3+y^5+x^2y^2$ & \\
    & & & \\
    \multicolumn{4}{l}{\underline{Characteristic $2$}} \\ 
    & $A_n$ & $z^{n+1}+xy$ & for $n \ge 1$ \\
    & $D^0_{2n}$ & $z^2+x^2y+xy^n$ & for $n \ge 2$ \\
    & $D^r_{2n}$ & $z^2+x^2y+xy^n+xy^{n-r}z$ & for $n \ge 2$ and $1 \le r < n$ \\
    & $D^0_{2n+1}$ & $z^2+x^2y+y^nz$ & for $n \ge 2$ \\
    & $D^r_{2n+1} \quad$ & $z^2+x^2y+y^nz+xy^{n-r}z \quad$ & for $n \ge 2$ and $1 \le r < n$ \\
    & $E^0_6$ & $z^2+x^3+y^2z$ & \\
    & $E^1_6$ & $z^2+x^3+y^2z+xyz$ & \\
    & $E^0_7$ & $z^2+x^3+xy^3$ & \\
    & $E^1_7$ & $z^2+x^3+xy^3+x^2yz$ & \\
    & $E^2_7$ & $z^2+x^3+xy^3+y^3z$ & \\
    & $E^3_7$ & $z^2+x^3+xy^3+xyz$ & \\
    & $E^0_8$ & $z^2+x^3+y^5$ & \\
    & $E^1_8$ & $z^2+x^3+y^5+xy^3z$ & \\
    & $E^2_8$ & $z^2+x^3+y^5+xy^2z$ & \\
    & $E^3_8$ & $z^2+x^3+y^5+y^3z$ & \\
    & $E^4_8$ & $z^2+x^3+y^5+xyz$ &
\end{longtable}

    \end{items}
\end{thm}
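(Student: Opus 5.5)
The statement is a compilation of known results, so the plan is to assemble it from the two cited sources rather than reprove it from scratch. I will establish \ref{thm:SimpleSing:1}$\Leftrightarrow$\ref{thm:SimpleSing:3} via Artin's classification \cite[Section~3]{MR0450263}, and \ref{thm:SimpleSing:2}$\Leftrightarrow$\ref{thm:SimpleSing:3} via the theorem of Greuel and Kr\"oning \cite[Theorem~1.4]{MR1033443}. The two equivalences share the list in \cref{tab:RDPEq} as a common pivot, and together they give the statement.

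For \ref{thm:SimpleSing:1}$\Rightarrow$\ref{thm:SimpleSing:3}, I will use the following facts about a rational double point, that is, a rational surface singularity of multiplicity $2$:
\begin{itemize}
    \item it is a hypersurface singularity, since embedding dimension equals multiplicity plus one for rational singularities;
    \item the exceptional curves of its minimal resolution are $(-2)$-curves whose dual graph is a Dynkin diagram of type $A$, $D$ or $E$.
\end{itemize}
Following Artin, I will then write $f = z^2 + a(x, y) z + b(x, y)$ after Weierstrass preparation (multiplicity $2$), and complete the square when $\operatorname{char} k \neq 2$. Next I will run through the successive blow-ups compatible with the given Dynkin graph, normalizing coefficients by coordinate changes in $k\exdbra{x, y, z}$ at each stage. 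This forces $f$ into one of the forms of \cref{tab:RDPEq}, and the superscript $r$ records the extra parameters that survive only in characteristics $2$, $3$ and $5$.

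For \ref{thm:SimpleSing:3}$\Rightarrow$\ref{thm:SimpleSing:1}, I will argue directly from each polynomial in the table:
\begin{itemize}
    \item checking the Jacobian (Tjurina) ideal shows that the singularity is isolated;
    \item the multiplicity is visibly $2$;
    \item an explicit sequence of blow-ups produces a resolution by $(-2)$-curves with ADE dual graph, and this implies rationality by Artin's criterion (the fundamental cycle has arithmetic genus $0$).
\end{itemize}

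For \ref{thm:SimpleSing:2}$\Leftrightarrow$\ref{thm:SimpleSing:3}, I will invoke Greuel--Kr\"oning. Their theorem states that a hypersurface singularity over an algebraically closed field has finite CM type if and only if it is simple, in the sense of having finitely many deformation-isomorphism classes of nearby singularities. They further show that in dimension two the simple singularities are exactly the ADE normal forms of \cref{tab:RDPEq}, including the characteristic-dependent variants.

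\begin{itemize}
    \item \emph{Finite type implies simple.} This is proved by semicontinuity: an adjacent non-simple singularity would produce a positive-dimensional family of pairwise non-isomorphic indecomposable MCM modules, constructed for instance from matrix factorizations $\begin{pmatrix} z & g \\ h & -z \end{pmatrix}$-type families.
    \item \emph{Simple implies finite type.} This follows by reduction to curves, either through Kn\"orrer periodicity or, in characteristic $2$ where periodicity fails, through the double-branched-cover argument on $z^2 + a z + b$.
\end{itemize}

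The main obstacle is the positive-characteristic bookkeeping. In characteristics $2$, $3$ and $5$, completing the square and the usual finite-determinacy arguments break down. One must therefore show that the additional terms $xy^{n-r}z$, $xyz$, $x^2y^3$, and so on cannot be removed by coordinate changes, and that no further normal forms arise. I would handle this exactly as in the cited papers, by explicit coordinate changes with careful tracking of the weights of monomials.
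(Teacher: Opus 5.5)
Your proposal matches the paper, which gives no argument of its own for this statement and simply cites Artin's classification for \ref{thm:SimpleSing:1}$\Leftrightarrow$\ref{thm:SimpleSing:3} and Greuel--Kr\"oning for the characterization by finite Cohen--Macaulay type. The one step your sketch glosses over is the claim that $f$ is forced into the forms of \cref{tab:RDPEq}: some entries of that table are not literally the cited normal forms (e.g.\ $E_6^1$ in characteristic $3$ is written $z^2+x^3+y^2z+xyz$ rather than Artin's $z^2+x^3+y^4+x^2y^2$), and completing the square as you propose would produce the latter, so you still need the explicit coordinate changes given in the remark following \cref{thm:SimpleSing}.
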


\begin{rem}
    \begin{items}
        \item For uniform treatment independent of the characteristic of $k$,
        some of the polynomials in \cref{tab:RDPEq} differ from those in Artin's classification (\cite[Section~3]{MR0450263}).
        Simple changes of variables show that both lists are essentially the same.
        For type $E_6^1$ in characteristic $3$, for instance, the polynomial
        $z^2 + x^3 + y^2z + xyz$ in \cref{tab:RDPEq} transforms into $z^2 + x^3 + y^4 + x^2y^2$ in Artin's list
        as follows:
        \begin{align*}
            &z^2 + x^3 + y^2z + xyz \\
            &= z^2 + (-x^2 + y_1^2)z + x^3 & &\text{$(y_1 \coloneqq -x + y)$} \\
            &= z_1^2 + x^3(1 - x) - y_1^4 - x^2y_1^2 & &\text{$(z_1 \coloneqq z + x^2 - y_1^2)$} \\
            &= z_1^2 + \dfrac{x_1^3}{(1 + x_1)^4} - y_1^4 - \dfrac{x_1^2y_1^2}{(1 + x_1)^2}
                & &\text{$\expar{x_1 \coloneqq \dfrac{x}{1 - x}}$} \\
            &= z_1^2 + \dfrac{x_1^3}{(1 + x_1)^4} - \dfrac{y_2^4}{(1 + x_1)^4} - \dfrac{x_1^2y_2^2}{(1 + x_1)^4}
                & &\text{$(y_2 \coloneqq (1 + x_1)y_1)$} \\
            &= -\dfrac{z_2^2}{(1 + x_1)^4} + \dfrac{x_1^3}{(1 + x_1)^4} - \dfrac{y_2^4}{(1 + x_1)^4} - \dfrac{x_1^2y_2^2}{(1 + x_1)^4}
                & &\text{$(z_2 \coloneqq \sqrt{-1}(1 + x_1)^2z_1)$} \\
            &= -\dfrac{1}{(1 - x_2)^4}(z_2^2 + x_2^3 + y_2^4 + x_2^2y_2^2)
                & &\text{$(x_2 \coloneqq -x_1)$.}
        \end{align*}
        \item The type of a rational double point $(X, p)$ in \cref{tab:RDPEq} indicates
        what the dual resolution graph is.
        For instance, if the rational double point $(X, p)$ is of type $E^1_8$ in characteristic $2$, 
        its dual resolution graph is the Dynkin graph of type $E_8$.
    \end{items}
\end{rem}

The configuration of the Auslander--Reiten quiver of the singularity category of a rational double point
was first determined by Auslander and Reiten (\cite[Theorem~1]{MR0887498}).
Hereafter, assume $(X, p)$ be a rational double point and
we will present an alternative approach to obtaining the configuration of the Auslander--Reiten quiver.
Let $M_0, M_1, \ldots, M_n \in \MCM(R)$ constitute a complete set of pairwise non-isomorphic
indecomposable maximal Cohen--Macaulay $R$-modules with $M_0 = R$,
and set $M \coloneqq \bigoplus_{i = 0}^n M_i$, $A \coloneqq \End_R (M)$ and $J \coloneqq \rad_R(M, M)$.
Let $\pi \colon \widetilde{X} \to \Spec R$ be the minimal resolution.
We identify maximal Cohen--Macaulay $R$-modules with the associated coherent sheaves, and set
\begin{align*}
    &\widetilde{M} \coloneqq (\pi^*M)^{\vee \vee}
        & &\text{for $M \in \MCM(R)$.}
\end{align*}

\begin{prop}[{\cite{MR0769609}}]
    \label{prop:AV}
    Let $E_1, E_2, \ldots, E_n$ be the exceptional prime divisors.
    By relabeling the indices if necessary,
    \begin{align}
        &c_1(\widetilde{M_i}) \cdot E_j = \delta_{i, j}
            & &\text{for $i, j = 1, 2, \ldots, n$.} \label{eq:AV}
    \end{align}
\end{prop}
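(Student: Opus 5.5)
We plan to follow the Artin--Verdier/Esnault argument for the McKay correspondence, with the quotient-singularity input replaced by general facts about rational singularities, which hold in every characteristic. The key objects are the \emph{full sheaves} $\widetilde{M}$ on $\widetilde{X}$. For $M \in \MCM(R)$ of rank $r$ we first show three properties: $\widetilde{M}$ is locally free; it is generated by global sections; and $H^1(\widetilde{X}, \widetilde{M}^\vee) = 0$, with $\pi_*\widetilde{M} = M$. Local freeness holds because $\widetilde{X}$ is a smooth surface and $\widetilde{M}$ is reflexive. For generation, take a surjection $R^m \to M$; it induces $\strsh_{\widetilde{X}}^m \to \pi^*M$, hence a map to $\widetilde{M}$ that is surjective off the exceptional locus. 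For the rest we use that $\pi_*\widetilde{M}$ is reflexive, hence equals $M$, and Lipman's vanishing $R^1\pi_*\strsh_{\widetilde{X}} = 0$ for rational singularities. The standard trick is as follows. Choose $r$ general sections of $M$. They give an exact sequence $0 \to \strsh_{\widetilde{X}}^{r} \to \widetilde{M} \to \strsh_C(D) \to 0$, where $C$ is a curve meeting the exceptional divisor transversally at general points. From this $c_1(\widetilde{M})$ is represented by the divisor $C$, so $c_1(\widetilde{M})\cdot E_j \geq 0$ for all $j$. Moreover, $c_1(\widetilde{M})\cdot E_j$ counts the points where $C$ meets $E_j$.

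Next we reduce \cref{eq:AV} to two claims, using that $M \mapsto \widetilde{M}$ is compatible with direct sums.

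(a) For an indecomposable non-free $M_i$, the class $c_1(\widetilde{M_i})$ is a nonzero effective class meeting exactly one $E_j$, with intersection number $1$.

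(b) The assignment $M_i \mapsto c_1(\widetilde{M_i}) \in \operatorname{Pic}(\widetilde{X}) \cong \bigoplus_j \mathbb{Z}\,[E_j]^*$ is injective on isoclasses, and every dual basis vector $E_j^*$ is attained.

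For (b) we invoke Esnault's theorem (\cite{MR0769609}), whose proof is characteristic-free once $\widetilde{X}$ is the minimal resolution of a rational singularity: full sheaves are determined up to isomorphism by $c_1$. More precisely, one uses $H^1(\widetilde{M}^\vee)=0$ to split extensions and shows that a full sheaf whose $c_1$ equals $\sum a_j E_j^*$ is isomorphic to a direct sum of full sheaves with $c_1 = E_j^*$, plus trivial summands. Existence of a full sheaf $\mathcal{L}_j$ with $c_1 = E_j^*$ comes from extending a line bundle $\mathcal{O}(D_j)$, where $D_j$ is a curvette transverse to $E_j$. The extension is the universal one, $0 \to \strsh^{\,h^1}\to \mathcal{M}_j \to \mathcal{O}(D_j)\to 0$, built using $H^1(\mathcal{O}(-D_j))$, so that the dual $H^1$ vanishes. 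The module $\pi_*\mathcal{M}_j$ is MCM because $R$ is a two-dimensional normal ring and $\pi_*$ of a vector bundle is reflexive.

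Claim (a) then follows from Krull--Schmidt: if $c_1(\widetilde{M_i})$ decomposed as a sum of two or more dual basis vectors, $\widetilde{M_i}$ would split, and so would $M_i = \pi_*\widetilde{M_i}$. This gives a bijection between non-free indecomposables and the $E_j$, and relabeling yields \cref{eq:AV}.

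The main obstacle is the splitting statement in (b). Its positive-characteristic validity must be checked: the vanishing $H^1(\widetilde{X}, \widetilde{M}^\vee)=0$ and the rigidity of full sheaves in terms of $c_1$. We expect Lipman's rationality results to suffice, since $R^1\pi_*\strsh = 0$ and the negative definiteness of the intersection matrix are characteristic-free, so the cited argument should go through verbatim.
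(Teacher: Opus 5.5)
The paper proves nothing here beyond citing \cite{MR0769609}. That reference is Artin--Verdier's paper, not Esnault's, and your outline is a reconstruction of their argument. As written, however, it has a genuine gap. You never use that a rational double point is Gorenstein with crepant minimal resolution, i.e.\ $\omega_{\widetilde{X}}\cong\strsh_{\widetilde{X}}$. Instead you claim that $H^1(\widetilde{X},\widetilde{M}^\vee)=0$ and the rigidity of full sheaves follow from rationality alone. Both claims fail for rational singularities in general. Take the cone over the twisted cubic ($E\cong\mathbb{P}^1$, $E^2=-3$). There the line bundle $\mathcal{L}$ with $\mathcal{L}\cdot E=2$ is $\widetilde{M}$ for a rank-one reflexive $M$. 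Yet $H^1(\widetilde{X},\mathcal{L}^{-1})\twoheadrightarrow H^1(E,\strsh_E(-2))\neq 0$, and since $\mathcal{L}$ has rank one with $c_1=2E^*$, it is not a sum of full sheaves with $c_1=E^*$ plus trivial summands.

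The missing ingredient is duality. For $\mathcal{F}$ locally free on $\widetilde{X}$ there is an exact sequence
\[
0\to H^0(\widetilde{X},\mathcal{F})\to H^0(\widetilde{X}\setminus E,\mathcal{F})\to H^1_E(\widetilde{X},\mathcal{F})\to H^1(\widetilde{X},\mathcal{F}),
\]
and by formal duality (valid in any characteristic) $H^1_E(\widetilde{X},\mathcal{F})$ is dual to $H^1(\widetilde{X},\mathcal{F}^\vee\otimes\omega_{\widetilde{X}})$. For $\mathcal{F}=\widetilde{M}$:
\begin{itemize}
\item the restriction map is onto, since both sides equal $M$ by reflexivity;
\item $H^1(\widetilde{X},\widetilde{M})=0$, by global generation and $R^1\pi_*\strsh_{\widetilde{X}}=0$;
\item hence $H^1_E(\widetilde{X},\widetilde{M})=0$.
\end{itemize}
Only the triviality of $\omega_{\widetilde{X}}$ turns this into $H^1(\widetilde{X},\widetilde{M}^\vee)=0$.

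The same omission breaks your existence step. It is not true that $\pi_*$ of a vector bundle is reflexive: $\pi_*\strsh_{\widetilde{X}}(-E_1)$ is the maximal ideal of $R$. By the sequence above, $\pi_*\mathcal{M}_j$ is reflexive once $H^1(\widetilde{X},\mathcal{M}_j)=0$ and $H^1_E(\widetilde{X},\mathcal{M}_j)=0$. The second vanishing is dual to the vanishing $H^1(\widetilde{X},\mathcal{M}_j^\vee)=0$ that your universal extension provides, again via $\omega_{\widetilde{X}}\cong\strsh_{\widetilde{X}}$. The first vanishing, and the identification $\widetilde{\pi_*\mathcal{M}_j}\cong\mathcal{M}_j$ needed to conclude $c_1=E_j^*$, both require $\mathcal{M}_j$ to be generated by global sections. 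That follows from Lipman's theorem that $\strsh_{\widetilde{X}}(D_j)$ is globally generated (as $D_j\cdot E_l\geq 0$), together with $H^1(\widetilde{X},\strsh_{\widetilde{X}})=0$.

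The rigidity step you defer is short once the vanishing is available.
\begin{itemize}
\item Choose $r-1$ global sections of $\widetilde{M}$ independent at every point of $E$. This is possible because $\widetilde{M}|_E$ is globally generated on a curve and every closed point of $\widetilde{X}$ lies on $E$. It gives $0\to\strsh_{\widetilde{X}}^{\oplus r-1}\to\widetilde{M}\to\det\widetilde{M}\to 0$.
\item Dualizing, $H^1(\widetilde{X},\widetilde{M}^\vee)=0$ says the components of the extension class generate the finite-length $R$-module $H^1(\widetilde{X},(\det\widetilde{M})^{-1})$.
\item After an automorphism of $\strsh_{\widetilde{X}}^{\oplus r-1}$ one gets $\widetilde{M}\cong\mathcal{U}\oplus\strsh_{\widetilde{X}}^{\oplus s}$. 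Here $\mathcal{U}$ is the extension by a minimal generating system, and it depends only on $\det\widetilde{M}$, i.e.\ on $c_1$. So ``determined by $c_1$'' must be read up to trivial summands.
\end{itemize}
With these repairs everything is characteristic-free, since $K_{\widetilde{X}}\cdot E_j=0$ and $\operatorname{Pic}(\widetilde{X})\hookrightarrow\mathbb{Z}^n$ hold for rational double points in every characteristic.
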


\begin{lem}[{e.g., \cite[Proposition~27.10]{MR1245487}}]
    \label{lem:PS}
    \begin{items}
        \item \label{lem:PS:1} For $i = 0, 1, \ldots, n$, let $e_i \colon M \to M$ be the composition of two canonical morphisms
        \[
            \begin{tikzcd}[ampersand replacement=\&]
                M \ar[two heads]{r} \&
                    M_i \ar[hook]{r} \&
                        M
            \end{tikzcd}
        \]
        and set $P_i \coloneqq e_i A \cong \Hom_R(M, M_i)$.
        Then $P_0, P_1, \ldots, P_n$ constitute a complete set of pairwise non-isomorphic
        indecomposable projective right $A$-modules.
        \item \label{lem:PS:2} For $i = 0, 1, \ldots, n$, set $S_i \coloneqq P_i / P_i J \cong \End_R(M_i) / \rad_R (M_i, M_i)$.
        Then $S_0, S_1, \ldots, S_n$ constitute a complete set of pairwise non-isomorphic
        simple right $A$-modules.
    \end{items}    
\end{lem}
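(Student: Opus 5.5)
This is the standard projective-cover and simple-module theory for a semiperfect ring, applied to $A = \End_R(M)$, so the plan is to verify that $A$ is semiperfect with Jacobson radical $J$ and then read off both statements.

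First, $A$ is module-finite over the complete local ring $R$, since $M$ is finitely generated. So $A$ is semiperfect, and its Jacobson radical contains $\mathfrak{m}A$. Next I would identify $J = \rad_R(M,M)$ with the Jacobson radical of $A$. Under the decomposition $M = \bigoplus_i M_i$, $J$ consists of the matrices $(\phi_{ij})$ with $\phi_{ij} \in \rad_R(M_j, M_i)$. For indecomposable $M_i$ over a complete local ring, $\End_R(M_i)$ is local, so $\rad_R(M_i,M_i)$ is its maximal ideal. For $i \neq j$, because $M_i \not\cong M_j$, every morphism $M_j \to M_i$ is radical. The standard argument (as in \cite[Proposition~27.10]{MR1245487}) then shows that $J$ is the Jacobson radical of $A$. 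I would also note that
\[
A/J \cong \prod_i \End_R(M_i)/\rad_R(M_i,M_i),
\]
and each factor is a division ring that is finite-dimensional over $k$. Since $k$ is algebraically closed, each factor is $k$ itself.

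For \ref{lem:PS:1}, the elements $e_i$ form a complete set of orthogonal idempotents with $\sum_i e_i = \id_M$, so $A = \bigoplus_i e_iA$ and each $P_i = e_iA$ is projective. We have $\End_A(e_iA) \cong e_iAe_i \cong \End_R(M_i)$, which is local, so $P_i$ is indecomposable. The identification $e_iA \cong \Hom_R(M, M_i)$ comes from sending $a \mapsto p_i \circ a$, where $p_i$ is the projection $M \to M_i$. For pairwise non-isomorphism, suppose $e_iA \cong e_jA$. Then there are $u \in e_jAe_i$ and $v \in e_iAe_j$ with $vu = e_i$ and $uv = e_j$. These give mutually inverse maps between $M_i$ and $M_j$, forcing $i = j$. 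For completeness, any indecomposable projective right $A$-module is a summand of some $A^m = \bigoplus_i P_i^m$. By Krull--Schmidt, which applies because each $\End_A(P_i)$ is local, it is isomorphic to some $P_i$.

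For \ref{lem:PS:2}, we have $P_iJ = e_iJ$, so $P_i/P_iJ = e_i(A/J)$. This is the $i$-th factor $\End_R(M_i)/\rad_R(M_i,M_i)$, which is simple. Any simple module $S$ is annihilated by $J$ by Nakayama, so it is a simple $A/J$-module and hence isomorphic to some $S_i$. The $S_i$ are pairwise non-isomorphic because $S_ie_j = \delta_{ij} S_i$.

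The only nontrivial point is the identification of $J$ with the Jacobson radical. I would cite it from \cite{MR1245487} rather than reprove it.
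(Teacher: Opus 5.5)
Your proposal is correct and follows the same route as the paper. The paper gives no argument beyond the citation \cite[Proposition~27.10]{MR1245487}, i.e.\ the standard theory of semiperfect rings applied to $A = \End_R(M)$ with the idempotents $e_0, \ldots, e_n$, and you have simply unpacked that citation; note also that under the paper's definition of $\rad_R(M,M)$, the ideal $J$ is by definition the Jacobson radical of $A$ (an element $x$ lies in it exactly when $1 - ax$ is a unit for every $a \in A$), so the step you flag as the only nontrivial one needs no separate reference.
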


We retain the notion of \cref{prop:AV} and \cref{lem:PS}, and assume \cref{eq:AV}.

\begin{prop}
    \label{prop:IrrToExt}
    \begin{align*}
        \rk_k \frac{\rad_R(M_i, M_j)}{\rad^2_R(M_i, M_j)} &= \rk_k \Ext_A^1(S_j, S_i)
            & &\text{for $i, j = 0, 1, \ldots, n$.}
    \end{align*}
\end{prop}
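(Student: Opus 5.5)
This is the standard fact that irreducible maps between indecomposables correspond to arrows of the quiver of the basic algebra $A=\End_R(M)$. I would prove it purely inside $\mod A$, using \cref{lem:PS}, in three steps.

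\emph{Step 1: identify irreducible maps with $e_jJe_i/e_jJ^2e_i$.} Since $M$ is the direct sum of all indecomposables of $\MCM(R)$ and $\rad_R$ is an ideal of $\MCM(R)$, we have $\rad_R(M_i,M_j)=e_jJe_i$. Every radical morphism $M_i\to M_j$ factors through finite direct sums of the $M_l$, which are direct summands of powers of $M$. Hence $\rad^2_R(M_i,M_j)=e_jJ^2e_i$, where the composition convention in $A$ is the one making $P_i=e_iA\cong\Hom_R(M,M_i)$ a right module. Thus the left-hand side equals $\rk_k e_jJe_i/e_jJ^2e_i$.

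\emph{Step 2: compute $\Ext^1_A(S_j,S_i)$ from a projective cover.} Since $R$ is complete, $A$ is a module-finite algebra over a complete local ring. By Krull--Schmidt, $A$ is semiperfect with $A/J\cong\prod k$; here each $S_l\cong k$ because $k$ is algebraically closed and $\End_R(M_l)$ is local with residue field finite over $k$. Start from the projective cover $0\to P_jJ\to P_j\to S_j\to0$ and apply $\Hom_A(-,S_i)$. The map $\Hom(P_j,S_i)\to\Hom(P_jJ,S_i)$ is zero, because any map $P_j\to S_i$ kills $P_jJ$ since $S_iJ=0$. Therefore
\[
\Ext^1_A(S_j,S_i)\cong\Hom_A(P_jJ,S_i)\cong\Hom_A(P_jJ/P_jJ^2,S_i).
\]
Here $P_jJ/P_jJ^2=e_jJ/e_jJ^2$ is a semisimple module, isomorphic to $\bigoplus_l S_l^{m_l}$. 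Using $\Hom_A(S_l,S_i)=\delta_{l,i}k$, we get $\rk_k\Ext^1_A(S_j,S_i)=m_i$.

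\emph{Step 3: identify the multiplicities.} We have $m_i=\rk_k\Hom_A(e_jJ/e_jJ^2,S_i)$. Since $S_i\cong e_iA/e_iJ$ is one-dimensional, the multiplicity of $S_i$ in a semisimple module $N$ equals $\rk_k Ne_i$. This holds because $Ne_i\cong\Hom_A(e_iA,N)$, and $\Hom_A(e_iA,S_l)=S_le_i=\delta_{i,l}k$. So $m_i=\rk_k (e_jJ/e_jJ^2)e_i=\rk_k e_jJe_i/e_jJ^2e_i$, which matches Step 1.

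The main obstacle is bookkeeping: matching left versus right multiplication with the order of $i,j$. One must check that $e_jJe_i$ really is $\rad_R(M_i,M_j)$ under the convention in which $e_iA\cong\Hom_R(M,M_i)$, since that convention is what produces $\Ext^1(S_j,S_i)$ rather than $\Ext^1(S_i,S_j)$. A secondary point is justifying $\rad^2$ as $J^2$ componentwise, which is the factorization argument in Step 1. For $i=0$ or $j=0$ nothing changes, since the argument works in $\MCM(R)$ and not in the stable category.
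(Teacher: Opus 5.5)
Your proof is correct and follows essentially the same route as the paper. Both arguments identify $\rad_R(M_i,M_j)/\rad^2_R(M_i,M_j)$ with $(P_jJ/P_jJ^2)e_i$, decompose $P_jJ/P_jJ^2\cong\bigoplus_l S_l^{\oplus m_l}$, read off $m_i$ via $S_le_i\cong\Hom_A(P_i,S_l)$, and get $\Ext^1_A(S_j,S_i)\cong\Hom_A(P_jJ/P_jJ^2,S_i)$ from $0\to P_jJ\to P_j\to S_j\to 0$. The differences are cosmetic: you justify $\rad^2_R(M_i,M_j)=e_jJ^2e_i$ and use $\End_A(S_i)\cong k$ explicitly, whereas the paper leaves the former implicit and compares both sides as $\End_A(S_i)^{\oplus m_i}$.
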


\begin{proof}
    We adopt an approach similar to that of \cite[III~Proposition~1.14]{MR1476671}.
    Take any $i, j = 0, 1, \ldots, n$. Then
    \[
        \frac{\rad_R(M_i, M_j)}{\rad^2_R(M_i, M_j)} = \frac{e_j J e_i}{e_j J^2 e_i}
        = \expar{\frac{e_j J}{e_j J^2}}e_i
        = \expar{\frac{P_j J}{P_j J^2}}e_i.
    \]
    Since $P_j J / P_j J^2$ is a right module over the semisimple ring $A / J$ and
    $S_0, S_1, \ldots, S_n$ also constitute a complete set of pairwise non-isomorphic simple right $A / J$-modules,
    there exist non-negative integers $m_0, m_1, \ldots, m_n$ such that
    \begin{align*}
        \frac{P_j J}{P_j J^2} &\cong \bigoplus_{l = 0}^n S_l^{\oplus m_l}
            & &\text{in $\mod A / J$.}
    \end{align*}
    Therefore as $k$-vector spaces,
    \begin{align*}
        \frac{\rad_R(M_i, M_j)}{\rad^2_R(M_i, M_j)} &\cong \bigoplus_{l = 0}^n (S_l e_i)^{\oplus m_l} \\
        &\cong \bigoplus_{l = 0}^n \Hom_A (P_i, S_l)^{\oplus m_l} \\
        &\cong \bigoplus_{l = 0}^n \Hom_A (S_i, S_l)^{\oplus m_l} \\
        &= \End_A(S_i)^{\oplus m_i}
            & &\text{by \cref{lem:PS}.}
    \end{align*}
    
    In what follows, we prove that $\End_A(S_i)^{\oplus m_i}$ is isomorphic to
    $\Ext_A^1(S_j, S_i)$ as right $A$-modules.
    Applying the functor $\Hom_A (-, S_i)$ to the short exact sequence
    \[
        \begin{tikzcd}[ampersand replacement=\&]
            0 \ar{r} \&
                P_j J \ar{r} \&
                    P_j \ar{r} \&
                        S_j \ar{r} \&
                            0
        \end{tikzcd}
    \]
    induces the long exact sequence
    \[
        \begin{tikzcd}[ampersand replacement=\&, row sep = tiny]
            0 \ar{r} \&
                \Hom_A(S_j, S_i) \ar{r} \&
                    \Hom_A(P_j, S_i) \ar{r} \&
                        \Hom_A(P_j J, S_i) \\
            \ar{r} \&
                \Ext^1_A(S_j, S_i) \ar{r} \&
                    \Ext^1_A(P_j, S_i) \ar{r} \&
                        \cdots\rlap{.}
        \end{tikzcd}
    \]
    Considering the morphism $\Hom_A(P_j, S_i) \to \Hom_A(P_j J, S_i)$ is zero
    and $\Ext^1_A(P_j, S_i) = 0$ by \cref{lem:PS}, we get $A$-linear isomorphisms
    \begin{align*}
        \Ext_A^1(S_j, S_i) &\cong \Hom_A(P_j J, S_i) \\
        &\cong \Hom_A \expar{\frac{P_j J}{P_j J^2}, S_i} \\
        &\cong \bigoplus_{l = 0}^n \Hom_A (S_l, S_i)^{\oplus m_l} \\
        &= \End_A (S_i)^{\oplus m_i} & &\text{by \cref{lem:PS}.} \qedhere
    \end{align*}
\end{proof}

\begin{lem}
    As rings,
    \[
        A \cong \End_{\widetilde{X}}(\widetilde{M}).
    \]
\end{lem}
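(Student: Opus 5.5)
Since $\widetilde{X}$ is a normal surface, reflexive sheaves on it are determined by their restriction to any open set whose complement has codimension two. The plan is to compare both endomorphism rings with homomorphisms over the smooth locus. Let $U \coloneqq \widetilde{X} \setminus \pi^{-1}(p) \cong \Spec R \setminus \{p\}$, where $\pi$ is an isomorphism over $U$.

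First I would build a ring homomorphism $\Phi \colon A = \End_R(M) \to \End_{\widetilde{X}}(\widetilde{M})$. An endomorphism $\phi$ of $M$ gives $\pi^*\phi$ on $\pi^*M$. Taking double duals is functorial, so this yields $(\pi^*\phi)^{\vee\vee}$ on $\widetilde{M}$. Functoriality makes $\Phi$ a ring homomorphism.

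To show $\Phi$ is bijective, I would restrict both sides to $U$ and consider the diagram
\[
\End_R(M) \to \End_{\widetilde{X}}(\widetilde{M}) \to \End_U(\widetilde{M}|_U) \cong \End_{\Spec R \setminus \{p\}}(M|_{\Spec R \setminus\{p\}}).
\]
Two facts are needed, both using that reflexive sheaves on a normal scheme satisfy Hartogs-type extension across closed subsets of codimension $\geq 2$.

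\textbf{Left side.} $R$ is a normal two-dimensional domain, since a rational double point is normal, and $M$ is maximal Cohen--Macaulay, hence reflexive (depth $2$, i.e.\ Serre's $S_2$). Therefore $\Hom_R(M,M) \cong \Gamma(\Spec R \setminus \{p\}, \shHom(M,M))$. Equivalently, $\shHom(M,M)$ is reflexive, so its sections extend over the codimension-two point $p$. This shows the composite $\End_R(M) \to \End_U$ is an isomorphism.

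\textbf{Right side.} I need $\End_{\widetilde{X}}(\widetilde{M}) \to \End_U(\widetilde{M}|_U)$ to be injective. On the smooth surface $\widetilde{X}$, the sheaf $\widetilde{M}$ is reflexive, hence locally free, so $\shHom(\widetilde{M},\widetilde{M})$ is locally free and in particular torsion-free. Its sections are therefore determined on the dense open set $U$, which gives injectivity.

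Combining the two, the composite is an isomorphism and the second map is injective, so $\Phi$ is surjective. $\Phi$ is injective because the composite is. Hence $\Phi$ is a ring isomorphism.

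The main obstacle is the right-hand step. The restriction map to $U$ is not obviously surjective, because $\pi^{-1}(p)$ is a divisor rather than a codimension-two set. Surjectivity of $\Phi$ therefore has to come from the left side, and injectivity is all that is needed on $\widetilde{X}$. I also need to check carefully that the identification of $\widetilde{M}|_U$ with $M|_{\Spec R\setminus\{p\}}$ is compatible with $\Phi$. This holds because $\pi^*$ and the double dual commute with restriction to the open set $U$, on which $\pi$ is an isomorphism and $M$ is already reflexive.
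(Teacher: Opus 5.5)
Your proof is correct, but it takes a different route from the paper's.

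\textbf{The paper's route.} The paper works entirely through $\widetilde{X}$. It applies $\Hom_{\widetilde{X}}(-, \widetilde{M})$ to the sequence $0 \to \Tor_{\widetilde{X}}(\pi^*M) \to \pi^*M \to \widetilde{M} \to 0$, which gives $\End_{\widetilde{X}}(\widetilde{M}) \cong \Hom_{\widetilde{X}}(\pi^*M, \widetilde{M})$. It then uses the adjunction $\pi^* \dashv \pi_*$ together with the canonical isomorphism $\iota \colon \pi_*\widetilde{M} \to M$ to reach $A$. The resulting ring isomorphism is $\phi \mapsto \iota \circ (\pi_*\phi) \circ \iota^{-1}$.

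\textbf{Your route.} You define the map in the opposite direction, $\phi \mapsto (\pi^*\phi)^{\vee\vee}$. You then prove bijectivity by restricting to the open set $U$ over which $\pi$ is an isomorphism. The depth-two (Hartogs) property of $\End_R(M)$ on the punctured spectrum makes $A \to \End_U(\widetilde{M}|_U)$ bijective. Torsion-freeness of $\shHom(\widetilde{M}, \widetilde{M})$ makes restriction to $U$ injective upstairs. Your deduction that $\Phi$ is bijective from these two facts is sound, and so is your compatibility check over $U$.

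\textbf{What each buys.} The paper's argument is shorter, but it takes $\pi_*\widetilde{M} \cong M$ for granted. That fact is normally proved by exactly your two ingredients: $M \to \pi_*\widetilde{M} \hookrightarrow \Gamma(U, \widetilde{M}) \cong M$, where the second map is injective by torsion-freeness and the composite is bijective by depth. Your version avoids the torsion sequence and the adjunction and is self-contained. It also makes multiplicativity of $\Phi$ immediate from functoriality. The price is the explicit compatibility check over $U$.

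One small remark: the injectivity step only needs $\widetilde{M}$ to be torsion-free, so you do not need to invoke local freeness on the regular surface $\widetilde{X}$.
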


\begin{proof}
    The long exact sequence induced by applying the functor $\Hom_{\widetilde{X}} (-, \widetilde{M})$
    to the short exact sequence
    \[
        \begin{tikzcd}[ampersand replacement=\&]
            0 \ar{r} \&
                \Tor_{\widetilde{X}} (\pi^* M) \ar{r} \&
                    \pi^* M \ar{r} \&
                        \widetilde{M} \ar{r} \&
                            0
        \end{tikzcd}
    \]
    tells us
    \[ \End_{\widetilde{X}} (\widetilde{M}) \cong \Hom_{\widetilde{X}} (\pi^* M, \widetilde{M}). \]
    Considering
    \begin{align*}
        \Hom_{\widetilde{X}} (\pi^* M, \widetilde{M}) \cong \Hom_R (M, \pi_* \widetilde{M}) \cong A,
    \end{align*}
    we get the ring isomorphism
    \begin{equation*}
        \begin{array}{ccc}
            \End_{\widetilde{X}}(\widetilde{M}) & \to & A \\
            \phi & \mapsto & \iota \circ (\pi_*\phi) \circ \iota^{-1}\rlap{,}
        \end{array}
    \end{equation*}
    where $\iota \colon \pi_* \widetilde{M} \to M$ is the canonical isomorphism.
\end{proof}

Via this ring isomorphism, $\End_{\widetilde{X}}(\widetilde{M})$ is identified with $A$.

\begin{prop}[{\cite[Section~3]{MR2057015}}]
    \label{prop:VdB}
    \begin{items}
        \item $\widetilde{M}$ is a tilting object of $\DD{b} (\Coh \widetilde{X})$\textup{:} the two functors
        \[
            \begin{tikzcd}[ampersand replacement=\&, column sep = huge]
                \DD{b} (\Coh \widetilde{X}) \ar[shift left=0.4ex]{r}{\RR \Hom_{\widetilde{X}}(\widetilde{M}, -)} \&
                    \DD{b} (\mod A) \ar[shift left=0.4ex]{l}{- \mathbin{\mathop{\otimes}\limits_{A}^{\mathbf{L}}} \widetilde{M}}
            \end{tikzcd}
        \]
        are mutually quasi-inverse triangulated equivalences. 
        \item In $\DD{b} (\Coh \widetilde{X})$,
        \begin{align*}
            S_i \dtens[A] \widetilde{M} &\cong \strsh_{E_i}(-1)[1]
                & &\text{for $i = 1, 2, \ldots, n$.}
        \end{align*}
    \end{items}
\end{prop}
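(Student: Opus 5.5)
We prove both statements of \cref{prop:VdB} from the standard tilting criterion on the minimal resolution. The facts we use are: $\pi$ has fibres of dimension at most one, $\mathbf{R}\pi_*\strsh_{\widetilde{X}} = R$ (rationality), and $\widetilde{M_i}$ is a full sheaf in the sense of Esnault and Artin--Verdier. In particular, $\widetilde{M_i}$ is locally free, globally generated, $H^1(\widetilde{X}, \widetilde{M_i}^\vee) = 0$, and $\pi_*\widetilde{M_i} = M_i$. Together with \cref{eq:AV}, $\widetilde{M_i}$ ($i \ge 1$) is the indecomposable full sheaf whose first Chern class is dual to $E_i$, and $\widetilde{M_0} = \strsh_{\widetilde{X}}$.

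\textbf{(1).} The plan is first to check $\Ext^{>0}_{\widetilde{X}}(\widetilde{M}, \widetilde{M}) = 0$ and then to check generation.

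For the vanishing, note that $\Ext^q(\widetilde{M_i}, \widetilde{M_j}) = H^q(\widetilde{M_i}^\vee \otimes \widetilde{M_j})$. This vanishes for $q \ge 2$ because the fibres have dimension at most one and $R$ is affine. For $q = 1$ we use the formal functions theorem. It reduces the question to $H^1$ on the infinitesimal thickenings $Z$ of the exceptional fibre. There, a filtration by the $\strsh_{E_l}$-pieces reduces it to the vanishing of $H^1(E_l, \mathcal{F}|_{E_l} \otimes \strsh_{E_l}(-Z'\cdot E_l))$ for cycles $Z'$. This is Van den Bergh's argument: $\widetilde{M_j}$ is generated by global sections and $\widetilde{M_i}^\vee$ has $H^1 = 0$. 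The cleanest route is to invoke his criterion, namely that a vector bundle $\mathcal{P}$ with $H^1(\mathcal{P}^\vee) = 0$ whose determinant is ``ample enough'' on each $E_l$ is a local projective generator of his perverse heart ${}^{-1}\mathrm{Per}(\widetilde{X}/R)$. Our $\widetilde{M}$ qualifies, since $\det$ has degree $1$ on each $E_l$ by \cref{eq:AV} and $\strsh_{\widetilde{X}}$ is a summand.

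For generation, suppose $\RR\Hom(\widetilde{M}, \mathcal{F}) = 0$. Taking the summand $\strsh$ gives $\mathbf{R}\pi_*\mathcal{F} = 0$. Then $\mathcal{F}$ is supported on the exceptional locus, with cohomology sheaves built from the $\strsh_{E_l}(-1)$. Pairing with $\widetilde{M_l}$, which has degree $1$ on $E_l$ and degree $0$ on the other $E$'s, detects these pieces, so $\mathcal{F} = 0$.

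Finally, the endomorphism ring is $A$ by the preceding lemma. Since $\widetilde{X}$ is smooth, $A$ has finite global dimension, so the adjoint pair restricts to quasi-inverse equivalences on $\DD{b}$.

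\textbf{(2).} Set $\mathcal{S}_i \coloneqq \strsh_{E_i}(-1)[1]$. It suffices to show that $\RR\Hom(\widetilde{M}, \mathcal{S}_i) \cong S_i$ as a complex of right $A$-modules, and then apply the quasi-inverse from (1).

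Since $E_i \cong \mathbb{P}^1$, we have $\widetilde{M_j}^\vee|_{E_i} \cong \strsh^{\oplus(r_j-1)} \oplus \strsh(-\delta_{ij})$, where the degree is given by \cref{eq:AV} for $j \ge 1$ and is $0$ for $j = 0$. Therefore
\[ \RR\Hom(\widetilde{M_j}, \strsh_{E_i}(-1)) = \mathbf{R}\Gamma(\strsh_{E_i}(-1)^{\oplus(r_j-1)} \oplus \strsh_{E_i}(-1-\delta_{ij})). \]
This vanishes for $j \ne i$. For $j = i$ it equals $H^1(\strsh_{E_i}(-2))[-1] = k[-1]$. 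After the shift $[1]$, $\RR\Hom(\widetilde{M}, \mathcal{S}_i)$ is one-dimensional, concentrated in degree $0$, and supported at the idempotent $e_i$. A one-dimensional $A$-module with $Me_i \ne 0$ and $Me_j = 0$ for $j \ne i$ is necessarily $S_i$ by \cref{lem:PS}, since $J$ acts nilpotently and hence by zero on a one-dimensional module.

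\textbf{Main obstacle.} The delicate step is the $\Ext^1$-vanishing in (1). Here I would rely on the cited Van den Bergh result rather than redo the formal-functions computation. The remaining steps are direct degree computations.
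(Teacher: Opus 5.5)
Your argument is essentially correct, but it takes a different and more self-contained route than the paper, which gives no argument of its own. The paper quotes both parts from Van den Bergh. That tacitly requires two identifications: matching $\widetilde{M}=\bigoplus_i\widetilde{M_i}$ with his local projective generator $\mathcal{O}\oplus\mathcal{M}$, which he builds from universal extensions $0\to\mathcal{O}^{\oplus(r-1)}\to\mathcal{M}_i\to\mathcal{L}_i\to 0$ of line bundles; and reading off $\mathcal{O}_{E_i}(-1)[1]$ as the simple objects of his perverse heart.

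You instead verify the tilting conditions from intrinsic properties of full sheaves: locally free, globally generated, $H^1(\widetilde{M_i}^\vee)=0$, and $c_1(\widetilde{M_i})\cdot E_j=\delta_{i,j}$. You then obtain (2) by directly computing $\mathbf{R}\Hom(\widetilde{M},\mathcal{O}_{E_i}(-1)[1])$, which avoids any perversity convention. That computation is correct once you say explicitly that global generation forces every summand of $\widetilde{M_j}|_{E_i}$ to have degree $\ge 0$. Without that, the intersection numbers alone do not determine the splitting type.

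Two points need repair.

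\textbf{The Ext-vanishing criterion.} The criterion you attribute to Van den Bergh is false as stated. On the minimal resolution of an $A_1$ singularity, let $\mathcal{O}(a)$ be the line bundle of degree $a$ on $E_1$, and take $\mathcal{P}=\mathcal{O}\oplus\mathcal{O}(1)^{\oplus 2}\oplus\mathcal{O}(-1)$. Then $H^1(\mathcal{P}^\vee)=0$ and $\det\mathcal{P}$ has degree $1$. Yet $\Ext^1(\mathcal{O}(1),\mathcal{O}(-1))=H^1(\mathcal{O}(-2))\neq 0$, so $\mathcal{P}$ is not tilting. Global generation is indispensable. With it, the step you call the main obstacle takes one line and needs no formal functions. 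Every coherent sheaf on $\widetilde{X}$ has vanishing $H^2$, so $H^1$ is right exact. A surjection $\mathcal{O}^{\oplus N}\twoheadrightarrow\widetilde{M_j}$ then gives a surjection $0=H^1(\widetilde{M_i}^\vee)^{\oplus N}\twoheadrightarrow H^1(\widetilde{M_i}^\vee\otimes\widetilde{M_j})=\Ext^1(\widetilde{M_i},\widetilde{M_j})$.

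\textbf{The generation step.} This step silently uses two facts that should be stated.
\begin{itemize}
\item $\mathbf{R}\pi_*\mathcal{F}=0$ implies $\mathbf{R}\pi_*\mathcal{H}^q(\mathcal{F})=0$ for all $q$. This holds because the spectral sequence $R^p\pi_*\mathcal{H}^q(\mathcal{F})\Rightarrow R^{p+q}\pi_*\mathcal{F}$ has only the columns $p=0,1$ and so degenerates.
\item Every coherent sheaf $\mathcal{G}$ with $\mathbf{R}\pi_*\mathcal{G}=0$ is an iterated extension of the sheaves $\mathcal{O}_{E_l}(-1)$. This is the real input, part of Van den Bergh's analysis of that subcategory, not a formality.
\end{itemize}
Given these, $\mathbf{R}\Hom(\widetilde{M},\mathcal{G})$ is concentrated in degree $1$ with dimension equal to the length of $\mathcal{G}$. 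The hypercohomology spectral sequence then yields $\Ext^{q+1}(\widetilde{M},\mathcal{F})\cong\Ext^1(\widetilde{M},\mathcal{H}^q(\mathcal{F}))$, which forces $\mathcal{F}=0$.
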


\begin{thm}
    \label{thm:IrrConfig}
    \begin{align*}
        \rk_k\frac{\srad_R(M_i, M_j)}{\srad^2_R(M_i, M_j)} &= 2\delta_{i, j} + E_i \cdot E_j
            & &\text{for $i, j = 1, 2, \ldots, n$.}
    \end{align*}
\end{thm}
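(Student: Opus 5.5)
The plan is to compute the right-hand side through the derived equivalence of \cref{prop:VdB} and the left-hand side through \cref{prop:IrrToExt}, after first reducing from the stable category to $\MCM(R)$.

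\emph{Step 1: from $\sMCM(R)$ to $\MCM(R)$.} Fix $i, j \geq 1$, so that $M_i, M_j \not\cong R$. Since $\MCM(R) \to \sMCM(R)$ is full, $\srad_R(M_i, M_j)$ is the image of $\rad_R(M_i, M_j)$, and $\srad^2_R(M_i,M_j)$ is the image of $\rad^2_R(M_i,M_j)$. The kernel consists of morphisms factoring through a free module $R^{\oplus m}$. Such a morphism is a sum of composites $M_i \to R \to M_j$. Neither $M_i$ nor $M_j$ is isomorphic to $R$, and all three modules are indecomposable, so both factors are radical. Hence the kernel already lies in $\rad^2_R(M_i, M_j)$, and
\[
\frac{\srad_R(M_i, M_j)}{\srad^2_R(M_i, M_j)} \cong \frac{\rad_R(M_i, M_j)}{\rad^2_R(M_i, M_j)} .
\]
By \cref{prop:IrrToExt} this has dimension $\rk_k \Ext^1_A(S_j, S_i)$.

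\emph{Step 2: transport to $\widetilde{X}$.} By \cref{prop:VdB}, the functor $- \dtens[A] \widetilde{M}$ is a triangulated equivalence sending $S_l$ to $\strsh_{E_l}(-1)[1]$. Therefore
\[
\Ext^p_A(S_j, S_i) \cong \Ext^p_{\widetilde{X}}(\strsh_{E_j}(-1), \strsh_{E_i}(-1)) \quad \text{for all } p .
\]
It remains to show
\[
\dim \Ext^1_{\widetilde{X}}(\strsh_{E_j}(-1), \strsh_{E_i}(-1)) = 2\delta_{i,j} + E_i\cdot E_j .
\]

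\emph{Step 3: Euler form and the extreme Ext groups.} These sheaves have proper support $E_j, E_i \cong \mathbb{P}^1$, and $\widetilde{X}$ is a smooth surface, so the Ext groups are finite-dimensional and vanish for $p \geq 3$.
\begin{itemize}
\item Since the minimal resolution of a rational double point is crepant, $\omega_{\widetilde{X}}$ is trivial in a neighbourhood of the exceptional locus. Serre duality for compactly supported sheaves then gives $\Ext^2(\mathcal{F}, \mathcal{G}) \cong \Hom(\mathcal{G}, \mathcal{F})^*$.
\item Riemann--Roch on a smooth surface, for sheaves supported on curves, gives $\chi(\mathcal{F}, \mathcal{G}) = -c_1(\mathcal{F}) \cdot c_1(\mathcal{G})$. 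The rank-zero and $\mathrm{ch}_2$ terms cancel because both sheaves are twisted by the same degree; I will check this by a local computation. This yields $\chi = -E_j \cdot E_i$.
\item For $i \neq j$: $\Hom = 0$ in both directions, because the supports are distinct and the sheaves are pure of dimension one. So $\Ext^0 = \Ext^2 = 0$ and $\dim \Ext^1 = E_i \cdot E_j$.
\item For $i = j$: $\Hom = k$ and $\Ext^2 = k$, and $\chi = -E_i^2 = 2$. Hence $\dim \Ext^1 = 1 + 1 - 2 = 0 = 2 + E_i^2$.
\end{itemize}
Both cases agree with $2\delta_{i,j} + E_i\cdot E_j$.

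\emph{Main obstacle.} I expect the main obstacle to be the Euler-form computation together with the Ext$^2$ vanishing. The Serre duality step needs care because $\widetilde{X}$ is not proper. I would first try to justify it by passing to a compactification, or by using that the sheaves are supported on the proper exceptional locus, where $\omega_{\widetilde{X}}$ is trivial.

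If the Riemann--Roch route proves delicate, I would compute $\Ext^1$ directly instead, via the local-to-global spectral sequence:
\begin{itemize}
\item For $i = j$: $\shExt^1(\strsh_{E}(-1), \strsh_E(-1)) \cong \mathcal{N}_{E} \cong \strsh_E(-2)$, and $H^1(\strsh_E) = 0$. These give $\Ext^1 = 0$.
\item For $i \neq j$: $\shExt^1(\strsh_{E_j}(-1), \strsh_{E_i}(-1))$ is a skyscraper sheaf of length $E_i\cdot E_j$ at the intersection points, and the $\shHom$ term vanishes.
\end{itemize}
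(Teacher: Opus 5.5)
Your proposal is correct and follows essentially the same route as the paper: you reduce to $\rad_R(M_i,M_j)/\rad^2_R(M_i,M_j)$, apply \cref{prop:IrrToExt} and the equivalence of \cref{prop:VdB}, and then read off $\dim \Ext^1$ from the Euler form via the $\Hom$ computation, Serre duality on the crepant resolution and Riemann--Roch; your Step~1 (showing that morphisms factoring through $R^{\oplus m}$ already lie in $\rad^2_R(M_i,M_j)$) usefully justifies the first equality, which the paper simply asserts. One small correction: in the Riemann--Roch step the $\mathrm{ch}_2$ terms drop out because both sheaves have rank zero, so $\chi(\mathcal{F},\mathcal{G}) = -c_1(\mathcal{F})\cdot c_1(\mathcal{G})$ holds for any twists, not because the two twists have the same degree.
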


\begin{proof}
    Take any $i, j = 1, 2, \ldots, n$. As
    \begin{align*}
        \rk_k \frac{\srad_R(M_i, M_j)}{\srad^2_R(M_i, M_j)} &= \rk_k \frac{\rad_R(M_i, M_j)}{\rad^2_R(M_i, M_j)} \\
        &= \rk_k \Ext_A^1(S_j, S_i)
            & &\text{by \cref{prop:IrrToExt}} \\
        &= \rk_k \Ext_{\widetilde{X}}^1(\strsh_{E_j}(-1), \strsh_{E_i}(-1))
            & &\text{by \cref{prop:VdB},}
    \end{align*}
    it remains to show $\rk_k \Ext_{\widetilde{X}}^1(\strsh_{E_j}(-1), \strsh_{E_i}(-1))
    = 2\delta_{i, j} + E_i \cdot E_j$.
    Since $\rk_k \Hom_{\widetilde{X}} (\strsh_{E_j}(-1), \strsh_{E_i}(-1)) = \delta_{i, j}$
    by \cref{lem:PS} \cref*{lem:PS:2} and \cref{prop:VdB},
    \begin{align*}
        \chi (\strsh_{E_j}(-1), \strsh_{E_i}(-1))
        &\coloneqq \sum_{l = 0}^2 (-1)^l \rk_k \Ext_{\widetilde{X}}^l (\strsh_{E_j}(-1), \strsh_{E_i}(-1)) \\
        &= 2 \delta_{i, j} - \rk_k \Ext_{\widetilde{X}}^1 (\strsh_{E_j}(-1), \strsh_{E_i}(-1))
            & &\text{by Serre duality.}
    \end{align*}
    On the other hand, the Riemann--Roch theorem states
    \[
        \chi (\strsh_{E_j}(-1), \strsh_{E_i}(-1))
        = \int_{\widetilde{X}} \ch(\strsh_{E_j}(-1)^{\vee}) \ch(\strsh_{E_i}(-1)) \td_{\widetilde{X}}
        = - E_j \cdot E_i.
    \]
    Combining these equations, we obtain
    \[ \rk_k \Ext_{\widetilde{X}}^1(\strsh_{E_j}(-1), \strsh_{E_i}(-1)) = 2\delta_{i, j} + E_i \cdot E_j. \qedhere \]
\end{proof}

Since the Auslander--Reiten translation $\tau$ acts as the identity
on $\sMCM (R)$ (e.g., \cite[Theorem~2.47]{MR2484725}),
\cref{thm:IrrConfig} yields the following statement as a corollary.

\begin{cor}[{\cite[Theorem~1]{MR0887498}}]
    \label{cor:RDPQuiver}
    The configuration of the Auslander--Reiten quiver $(\varGamma, \tau)$ of $\sMCM (R)$ is the double quiver
    of the dual resolution graph of the rational double point $(X, p)$ as in \cref{tab:Config},
    where the Auslander--Reiten translation $\tau$ is depicted by dashed arrows.
    Here, types $D_n$ $(n \geq 4)$, $E_6$, $E_7$ and $E_8$ in \cref{tab:RDPEq} also be denoted by
    $D_n^0$, $E_6^0$, $E_7^0$ and $E_8^0$ respectively.
    
\begin{longtable}{lc}
    \caption{Configuration of the Auslander--Reiten quiver of $\sMCM(R)$.}
    \label{tab:Config}
    \endfirsthead
    $A_n$ & $
\begin{tikzcd}[ampersand replacement=\&]
    \bullet \ar[shift left=0.4ex]{r} \ar[out=225,in=315,loop, dashed] \&
        \bullet \ar[shift left=0.4ex]{r} \ar[shift left=0.4ex]{l} \ar[out=225,in=315,loop, dashed] \&
            \cdots \ar[shift left=0.4ex]{r} \ar[shift left=0.4ex]{l} \&
                \bullet \ar[shift left=0.4ex]{l} \ar[out=225,in=315,loop, dashed]
\end{tikzcd}
$ \\
    $D_n^r$ & $
\begin{tikzcd}[ampersand replacement=\&, row sep=tiny]
    \&
        \&
            \&
                \&
                    \bullet \ar[shift left=0.4ex]{dl} \ar[out=225,in=315,loop, dashed] \\
    \bullet \ar[shift left=0.4ex]{r} \ar[out=225,in=315,loop, dashed] \&
        \bullet \ar[shift left=0.4ex]{r} \ar[shift left=0.4ex]{l} \ar[out=225,in=315,loop, dashed] \&
            \cdots \ar[shift left=0.4ex]{r} \ar[shift left=0.4ex]{l} \&
                \bullet \ar[shift left=0.4ex]{l} \ar[shift left=0.4ex]{ur} \ar[shift left=0.4ex]{dr} \ar[out=225,in=315,loop, dashed] \&
                    \\
    \&
        \&
            \&
                \&
                    \bullet \ar[shift left=0.4ex]{ul} \ar[out=225,in=315,loop, dashed]
\end{tikzcd}
$ \\
    $E_n^r$ & $
\begin{tikzcd}[ampersand replacement=\&]
    \&
        \&
            \bullet \ar[shift left=0.4ex]{d} \ar[out=315,in=45,loop, dashed] \&
                \&
                    \&
                        \\
    \bullet \ar[shift left=0.4ex]{r} \ar[out=225,in=315,loop, dashed] \&
        \bullet \ar[shift left=0.4ex]{r} \ar[shift left=0.4ex]{l} \ar[out=225,in=315,loop, dashed] \&
            \bullet \ar[shift left=0.4ex]{r} \ar[shift left=0.4ex]{l} \ar[shift left=0.4ex]{u} \ar[out=225,in=315,loop, dashed] \&
                \bullet \ar[shift left=0.4ex]{r} \ar[shift left=0.4ex]{l} \ar[out=225,in=315,loop, dashed] \&
                    \cdots \ar[shift left=0.4ex]{r} \ar[shift left=0.4ex]{l} \&
                        \bullet \ar[shift left=0.4ex]{l} \ar[out=225,in=315,loop, dashed]
\end{tikzcd}
$
\end{longtable}

\end{cor}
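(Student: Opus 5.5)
The corollary follows from \cref{thm:IrrConfig} once we translate its numerical statement into statements about the Auslander--Reiten quiver. First we fix what the quiver $\varGamma$ of $\sMCM(R)$ is. Its vertices are the isoclasses of indecomposable non-projective objects, namely $M_1, \ldots, M_n$, since $M_0 = R$ becomes zero in $\sMCM(R)$. The number of arrows $M_i \to M_j$ is $\rk_k \srad_R(M_i, M_j)/\srad^2_R(M_i, M_j)$, the dimension of the space of irreducible morphisms; here $k$ is algebraically closed, so $\End(M_i)/\rad = k$ and no division-ring multiplicities arise. The translation $\tau$ is drawn as dashed arrows. By \cref{thm:IrrConfig} this number equals $2\delta_{i,j} + E_i \cdot E_j$.

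We then evaluate this using the intersection theory of the minimal resolution of a rational double point. Each $E_i$ is a smooth rational $(-2)$-curve, so $E_i^2 = -2$ and the diagonal entry is $2 - 2 = 0$; hence $\varGamma$ has no loops. For $i \neq j$, the curves $E_i$ and $E_j$ meet transversally in at most one point, so $E_i \cdot E_j \in \{0, 1\}$, and it equals $1$ exactly when $E_i$ and $E_j$ are adjacent in the dual resolution graph. The matrix is symmetric in $i$ and $j$, so each edge of the dual graph yields exactly one arrow in each direction. This is precisely the double quiver of the dual resolution graph.

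The shape of the dual graph in each case is part of Artin's classification, recorded in the remark after \cref{thm:SimpleSing}: the type label names the Dynkin diagram. So $A_n$ gives a path, $D_n^r$ gives a $D_n$ graph and $E_n^r$ gives an $E_n$ graph, as in \cref{tab:Config}.

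Finally we account for the dashed arrows. Since $R$ is a two-dimensional Gorenstein isolated singularity, $\tau = \Omega^{2-\dim R} = \id$ on $\sMCM(R)$, as cited from \cite[Theorem~2.47]{MR2484725}. Thus $\tau M_i \cong M_i$ for every $i$, giving a dashed loop at each vertex.

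The only point needing care is the identification of $\rad/\rad^2$ in the stable category with the arrows of the AR quiver, and the fact that passing from $\MCM(R)$ to $\sMCM(R)$ only deletes the vertex $R$. The first equality in the proof of \cref{thm:IrrConfig} already handles this for $i,j \geq 1$, so it suffices to note that we are using the standard definition of the stable AR quiver.
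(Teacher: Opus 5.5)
Your proposal is correct and follows the paper's route. The paper likewise reads the corollary off from \cref{thm:IrrConfig} together with the fact that $\tau$ is the identity on $\sMCM(R)$ by \cite[Theorem~2.47]{MR2484725}; you additionally spell out the intersection-number bookkeeping ($E_i^2=-2$, and $E_i\cdot E_j\in\{0,1\}$ according to adjacency in the dual graph), which the paper leaves implicit.
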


\section{Matrix factorizations}
\label{sec:MF}
In preparation for the next section, we recall
that stable categories of maximal Cohen--Macaulay modules
are equivalent to homotopy categories of matrix factorizations as $k$-linear triangulated categories.
This description is more amenable to concrete computations.
Then we will discuss several basic calculations intended for a computer algebra system.

\begin{dfn}
    \label{def:MF}
    Let $S$ be a commutative ring and $f$ a regular element of $S$.
    \begin{items}
        \item \label{def:MF:1} A \emph{matrix factorization} of $f$ is a diagram
        \begin{equation*}
            \begin{tikzcd}[ampersand replacement=\&]
                S^n \ar{r}{B} \&
                    S^n \ar{r}{A} \&
                        S^n
            \end{tikzcd}
        \end{equation*}
        of free $S$-modules of finite rank, often referred to as $(A, B)$, such that 
        \[ A \circ B = B \circ A = f \cdot \id_{S^n}. \]
        A \emph{morphism} from one matrix factorization $(A, B)$ to another $(A', B')$
        is a pair $(X, Y)$ of $S$-linear maps such that the following diagram commutes:
        \begin{equation}   
            \label{eq:Mor}  
            \begin{tikzcd}[ampersand replacement=\&]
                S^m \ar{r}{B} \ar{d}[']{X} \&
                    S^m \ar{r}{A} \ar{d}{Y} \&
                        S^m \ar{d}{X} \\
                S^n \ar{r}[']{B'} \&
                    S^n \ar{r}[']{A'} \&
                        S^n\rlap{.}
            \end{tikzcd}
        \end{equation}
        The \emph{category of matrix factorizations}  of $f$ is denoted by $\MF_S (f)$.
        \item \label{def:MF:2} Two morphisms $(X, Y)$ and $(X', Y')$ from one matrix factorization $(A, B)$ to another $(A', B')$
        are \emph{homotopic}, if there exists a pair $(H_A, H_B)$ of $S$-linear maps as illustrated in the diagram
        \begin{equation*}     
            \begin{tikzcd}[ampersand replacement=\&]
                S^m \ar{r}{B} \ar{d}[']{X - X'} \&
                    S^m \ar{r}{A} \ar{d}{Y - Y'} \ar{dl}{H_B} \&
                        S^m \ar{d}{X - X'} \ar{dl}{H_A} \\
                S^n \ar{r}[']{B'} \&
                    S^n \ar{r}[']{A'} \&
                        S^n
            \end{tikzcd}
        \end{equation*}
        such that
        \begin{align}
            &X - X' = H_B \circ B + A' \circ H_A, \label{eq:Htp1} \\
            &Y - Y' = B' \circ H_B + H_A \circ A. \label{eq:Htp2}
        \end{align}
        Such a pair $(H_A, H_B)$ is called a \emph{homotopy} between the morphisms $(X, Y)$ and $(X', Y')$.
        In particular when $X' = 0$ and $Y' = 0$,
        the pair $(H_A, H_B)$ is called a \emph{null-homotopy} of the morphism $(X, Y)$.
        The \emph{homotopy category} $\HMF_S(f)$ of matrix factorizations of $f$
        is defined to be the category whose objects are the matrix factorizations of $f$,
        and whose morphisms are the homotopy classes of morphisms of matrix factorizations.
    \end{items}
\end{dfn}

\begin{rem}
    In \cref{def:MF}, the left rectangle of \cref{eq:Mor} commutes if and only if the right one does,
    and the conditions \cref{eq:Htp1} and \cref{eq:Htp2} are equivalent.
\end{rem}

\begin{thm}[{\cite[Chapter~4]{MR4390795} and {\cite[Theorem~6.1]{MR570778}}}]
    \label{thm:HMFToMCM}
    Let $S$ be a regular local ring and $f$ a non-zero element of $S$.
    Then the functor
    \[
        \begin{array}{ccc}
            \HMF_S (f) & \xrightarrow{\Coker} & \sMCM (S / \exgen{f}) \\
            (A, B) & \mapsto & \Coker A
        \end{array}
    \]
    is a well-defined triangulated equivalence.
\end{thm}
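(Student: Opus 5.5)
The plan is to build an explicit functor at the level of the additive categories, check that it descends to the homotopy and stable categories, and then verify full faithfulness, essential surjectivity and compatibility with the triangulated structures. Set $R \coloneqq S/\exgen{f}$. For a matrix factorization $(A, B)$ of size $n$, the relation $A B = f \cdot \id$ together with $f$ being a non-zero-divisor in the domain $S$ shows that $A$ is injective. Hence $0 \to S^n \xrightarrow{A} S^n \to \Coker A \to 0$ is a free resolution of length one over $S$. Since $f$ annihilates $\Coker A$ (because $f \cdot \id = AB$ has image in $\Im A$), $\Coker A$ is an $R$-module with $\operatorname{pd}_S \Coker A \le 1$. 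The Auslander--Buchsbaum formula then gives $\operatorname{depth} \Coker A \ge \dim S - 1 = \dim R$. Therefore $\Coker A \in \MCM(R)$, or it is zero when $A$ is invertible. A morphism $(X, Y)$ induces a map on cokernels via $X$, so $\Coker$ is a well-defined functor $\MF_S(f) \to \MCM(R)$.

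\textbf{Descending to the homotopy category.} I will show that null-homotopic morphisms are sent to morphisms factoring through free $R$-modules. Reducing modulo $f$ gives the $2$-periodic exact complex $\cdots \to R^n \xrightarrow{\bar B} R^n \xrightarrow{\bar A} R^n \to \Coker A \to 0$. Exactness holds because $\ker \bar A = \Im \bar B$, which follows from $AB = f$ and $A$ being injective. If $X = H_B B + A' H_A$, then the induced map on $\Coker A$ factors as $\Coker A \to R^n \to \Coker A'$: the first map is induced by $\bar H_B$ composed appropriately with $\bar B$ (note $\bar B$ kills $\Im \bar A$), and the second is the projection. The other summand $A' H_A$ vanishes on the cokernel. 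This gives the functor $\HMF_S(f) \to \sMCM(R)$.

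\textbf{Essential surjectivity.} This is Eisenbud's construction. Given $M \in \MCM(R)$, Auslander--Buchsbaum gives $\operatorname{pd}_S M = 1$, so $M$ has a resolution $0 \to S^m \xrightarrow{A} S^n \to M \to 0$. Localizing at $f$ kills $M$, so $A$ becomes invertible over $S_f$, which forces $m = n$. Since $f M = 0$, we have $f S^n \subset \Im A$, and $B$ is defined by $A B = f$. Then $A B A = f A = A (f \cdot \id)$, and injectivity of $A$ gives $B A = f$.

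\textbf{Full faithfulness.} This is the step I expect to be the main obstacle. Fullness follows by lifting a map $\Coker A \to \Coker A'$ to $X$ using freeness, then obtaining $Y$ from $X A \in \Im A'$ and injectivity of $A'$. Faithfulness is the delicate part: if the induced map factors through a free $R$-module $R^p$, one has to produce $(H_A, H_B)$. I would first reduce to the case where the map factors through the projective cover $R^n \to \Coker A'$. I would then lift the factorization $\Coker A \to R^p \to \Coker A'$ through the periodic complex above, using that maps to $R$ out of $\Coker A$ correspond to elements of $\ker \bar{\trans{A}}$, which equals $\Im \bar{\trans{B}}$ by the transpose factorization. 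After that, lifting to $S$ and cancelling via injectivity produces $H_B$ and $H_A$ satisfying \cref{eq:Htp1}.

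\textbf{Triangulated structure.} Finally, $\Coker$ commutes with shifts: the shift on $\HMF_S(f)$ is $(A, B) \mapsto (-B, -A)$, and $\Coker B \cong \Omega^{-1}\Coker A$ by the periodic complex, which is the shift in $\sMCM(R)$ by \cref{thm:MCMToDsg}. Mapping cones correspond to the standard triangles built from pushouts along injective hulls $M \to R^n$. I would check this by comparing the cone factorization $\begin{pmatrix} A' & X \\ 0 & -B\end{pmatrix}$ with the standard triangle directly.
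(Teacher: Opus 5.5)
Your proposal is correct and is the standard Eisenbud--Buchweitz argument that the paper only cites without proof: Eisenbud's construction gives essential surjectivity, lifting through the $2$-periodic resolution gives full faithfulness, and Happel's triangulation of a Frobenius category gives exactness. The one adjustment is that the Frobenius property of $\MCM(S/\exgen{f})$ used for the shift should be justified by $S/\exgen{f}$ being Gorenstein rather than by \cref{thm:MCMToDsg}, which the paper states only for $\widehat{\strsh}_{X,p}$.
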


The following lemma will be used in the next section.

\begin{lem}
    \label{lem:SizeDown}
    Let $S$ be a regular local ring and $f$ a non-zero element of $S$, and set $R \coloneqq S / \exgen{f}$.

    \begin{items}
        \item \label{lem:SizeDown:1} Let $(A, B) \in \HMF_S (f)$ such that
        \begin{align*}
            A &= 
            \begin{pNiceArray}{c:c}[margin]
                * & * \\
                \hdottedline
                * & g I_n
            \end{pNiceArray}
            \in M_{2n}(S)
                & &\text{with $g \in S \setminus \exset{0}$.}
        \end{align*}
        Then
        \[ \Coker(A, B) \cong \exgen{\bm{b}_1, \bm{b}_2, \ldots, \bm{b}_{2n}}_{R} \subset R^n, \]
        where
        \begin{align*}
            B &= 
            \begin{pNiceArray}{cccc}[margin]
                \bm{b}_1 & \bm{b}_2 & \cdots & \bm{b}_{2n} \\
                \hdottedline
                \Block{1-4}{*}
            \end{pNiceArray}
            \in M_{2n}(S)
                & &\text{with $\bm{b}_1, \bm{b}_2, \ldots, \bm{b}_{2n}  \in S^n$.}
        \end{align*}
        \item \label{lem:SizeDown:2} Let $(A, B) \in \HMF_S (f)$ such that
        \begin{align*}
            A &= 
            \begin{pNiceArray}{c:c}[margin]
                g I_n & * \\
                \hdottedline
                * & *
            \end{pNiceArray}
            \in M_{2n}(S)
                & &\text{with $g \in S \setminus \exset{0}$.}
        \end{align*}
        Then
        \[ \Coker(A, B) \cong \exgen{\bm{b}_1, \bm{b}_2, \ldots, \bm{b}_{2n}}_{R} \subset R^n, \]
        where
        \begin{align*}
            B &= 
            \begin{pNiceArray}{cccc}[margin]
                \Block{1-4}{*} \\
                \hdottedline
                \bm{b}_1 & \bm{b}_2 & \cdots & \bm{b}_{2n}
            \end{pNiceArray}
            \in M_{2n}(S)
                & &\text{with $\bm{b}_1, \bm{b}_2, \ldots, \bm{b}_{2n}  \in S^n$.}
        \end{align*}
    \end{items}
\end{lem}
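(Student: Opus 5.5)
The plan is to reduce everything to the standard description of the cokernel of a matrix factorization as an image over $R$, and then to project onto the coordinates that are not governed by the block $gI_n$.

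\emph{Step 1: $\Coker(A, B) \cong \Im \overline{B}$.} Write $\overline{(-)}$ for reduction modulo $f$. From $AB = BA = f I_{2n}$, the sequence
\[ R^{2n} \xrightarrow{\overline{B}} R^{2n} \xrightarrow{\overline{A}} R^{2n} \xrightarrow{\overline{B}} R^{2n} \]
is exact. This is the usual fact that the reduced $2$-periodic complex of a matrix factorization is exact. Indeed, if $\overline{A}\,\overline{w} = 0$ then $Aw = f u$ for some $u$. Hence $f w = BAw = f B u$, and so $w = Bu$ because $f$ is regular in $S$. Since $\Coker A = \Coker \overline{A}$, we get
\[ \Coker(A, B) = \Coker \overline{A} \cong \Im \overline{B} = \exgen{\overline{\text{columns of } B}}_R \subset R^{2n}. \]

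\emph{Step 2: injectivity of the projection.} Let $p \colon R^{2n} \to R^n$ be the projection onto the first $n$ coordinates in case \cref*{lem:SizeDown:1}, and onto the last $n$ coordinates in case \cref*{lem:SizeDown:2}. The image of $\Im \overline{B}$ under $p$ is generated by the vectors $\bm{b}_1, \ldots, \bm{b}_{2n}$, since these are exactly the corresponding parts of the columns of $B$. So the lemma follows once $p|_{\Im \overline{B}}$ is shown to be injective.

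For case \cref*{lem:SizeDown:1}, write the lower block row of $A$ as $(C \;\, gI_n)$. Let $\overline{w} = (\overline{w_1}, \overline{w_2}) \in \Im \overline{B} = \Ker \overline{A}$ with $\overline{w_1} = 0$. The lower block of $\overline{A}\,\overline{w} = 0$ reads $\overline{C}\,\overline{w_1} + \overline{g}\,\overline{w_2} = 0$. This gives $\overline{g}\,\overline{w_2} = 0$ in $R^n$. Hence $\overline{w_2} = 0$ as soon as $\overline{g}$ is a nonzerodivisor on $R$. Case \cref*{lem:SizeDown:2} is identical with the roles of the blocks swapped: one uses the upper block row $(gI_n \;\, C')$ and the projection onto the last $n$ coordinates.

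\emph{Main obstacle: regularity of $\overline{g}$ on $R$.} The hypothesis only states $g \neq 0$, so I must justify that $\overline{g}$ is $R$-regular. I would try two routes.
\begin{itemize}
    \item For a rational double point, $f$ is irreducible, so $R$ is a domain, and it suffices to see $g \notin \exgen{f}$. If $g = f h$, then the block $gI_n$ could be cleared by column operations in $A$ together with the matching row operations in $B$. This would split off a trivial summand $(f, 1)$, which is zero in $\HMF_S(f)$, and one can reduce to that situation.
    \item More intrinsically, $\Im \overline{B}$ is a maximal Cohen--Macaulay submodule of a free module. Its elements $(0, \overline{w_2})$ with $\overline{g}\,\overline{w_2} = 0$ form a submodule annihilated by $\overline{g}$. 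I would argue that this submodule is torsion, hence zero, using that MCM modules over the reduced ring $R$ are torsion-free. This works whenever $\overline{g}$ avoids the minimal primes of $R$.
\end{itemize}
I expect this regularity point to be the only genuinely delicate step. The rest is formal.
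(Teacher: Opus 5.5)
Your Steps 1 and 2 are exactly the paper's argument: $\Coker A \cong \operatorname{Im}\overline{B} = \operatorname{Ker}\overline{A}$, followed by projecting away from the $gI_n$ block. The paper justifies injectivity only ``by the form of $A$'', so it tacitly uses the same fact you isolate. The problem is that this fact cannot be derived from $g \neq 0$, and the lemma is false unless $\overline{g}$ is $R$-regular. Take $S = k[\![x, y]\!]$, $f = xy$, $A = \mathrm{diag}(y, x)$ and $B = \mathrm{diag}(x, y)$. This is case (1) with $n = 1$ and $g = x$. Then $\Coker A \cong R/(\bar{y}) \oplus R/(\bar{x})$, whereas $\langle \overline{\bm{b}_1}, \overline{\bm{b}_2} \rangle_R = \bar{x}R \cong R/(\bar{y})$. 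These differ even in $\sMCM(R)$, and indeed the kernel of your projection is $0 \oplus \bar{y}R \neq 0$. Even over a domain, $g \in (f)$ breaks the statement in $\operatorname{mod} R$: $A = \mathrm{diag}(1, f)$, $B = \mathrm{diag}(f, 1)$ gives $\Coker A \cong R$ but $\langle \bar{f}, 0 \rangle_R = 0$.

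So neither of your routes can work. Route 1 does not actually reduce the problem. The conclusion concerns the particular vectors $\bm{b}_i$ read off from the given $B$, and these change under row and column operations. Discarding a summand $(f,1)$ also changes the module by a free summand. That matters because the paper uses the lemma in $\operatorname{mod} R$, for ranks and for the embeddings $M_i \subset R^{r_i}$. Route 2 is circular. Since $R$ is Cohen--Macaulay, $\operatorname{Ass} R = \operatorname{Min} R$, so ``$\overline{g}$ avoids the minimal primes'' just means ``$\overline{g}$ is $R$-regular''. In that case $\overline{g}\,\overline{w_2} = 0$ in the free module $R^n$ already forces $\overline{w_2} = 0$, with no need for the torsion-freeness argument. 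The right fix is to add the hypothesis that $\overline{g}$ is a nonzerodivisor on $R$ (equivalently, $g$ and $f$ share no prime factor in the factorial ring $S$). With that hypothesis, your Steps 1--2 are a complete proof. Every application in the paper satisfies it: $R$ is a normal domain, and $g = z^i$ (type $A_n$) or $g = z$ (types $D$) is not divisible by $f$.
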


\begin{proof}
    We only consider \cref*{lem:SizeDown:1}. Since the sequence in $\mod R$
    \begin{align*}
        &\begin{tikzcd}[ampersand replacement=\&]
            \cdots \ar{r} \&
                R^{2n} \ar{r}{A} \&
                    R^{2n} \ar{r}{B} \&
                        R^{2n} \ar{r}{A} \&
                            R^{2n} \ar{r} \&
                                \Coker A \ar{r} \&
                                    0
        \end{tikzcd}
    \end{align*}
    is exact,
    \[ \Coker (A, B) = \Coker A = R^{2n} / \Im A = R^{2n} / \Ker B \cong \Im B. \]
    On the other hand, the composition
    \[
        \begin{tikzcd}[ampersand replacement=\&]
            \Im B = \Ker A \ar[hook]{r} \&
                R^{2n} \ar{r}{
                    \begin{pmatrix}
                        I_n & 0
                    \end{pmatrix}
                } \&
                    R^n\rlap{,}
        \end{tikzcd}
    \]
    denoted by $p \colon \Im B \to R^n$, is injective by the form of $A$.
    Therefore, we obtain $R$-linear isomorphisms
    \[
        \Coker(A, B) \cong \Im B \cong \Im p =
        \exgen{\bm{b}_1, \bm{b}_2, \ldots, \bm{b}_{2n}}_{R}.
        \qedhere
    \]
\end{proof}

Let $K$ be a finite extension over the prime field of $k$, and
set polynomial rings $S_K \coloneqq K[x_0, x_1, \ldots, x_d]$ and $S \coloneqq k[x_0, x_1, \ldots, x_d]$.
Consider $R_K \coloneqq S_K / \exgen{f}$ and $R \coloneqq S / \exgen{f}$ for some $f \in S_K \setminus \exset{0}$
such that $\Spec R$ is regular except at the origin.
By convention, the completions of these rings are always taken with respect to the origin, and
$\D_K$, $\D$ and $\widehat{\D}$ denote
$\HMF_{S_K} (f)$, $\HMF_{S} (f)$ and $\HMF_{\widehat{S}} (f)$ respectively.
For positive integers $m$ and $n$ and an extension ring $T$ of $S_K$, the matrix obtained from $X \in M_{m, n} (T)$
by substituting $0$ for $x_0, x_1, \ldots, x_d$ is denoted by $X_0$.

While we are interested in $\widehat{\D}$ and
intend to utilize a computer algebra system for calculating matrix factorizations,
computers cannot handle neither algebraically closed fields nor rings of formal power series
due to their infinite nature.
For the rest of this section, we will see that calculations in $\widehat{\D}$ can be reduced to those in $\D_K$,
thereby enabling the application of computer algebra systems.

\subsection{Homomorphisms}
\label{subsec:Homs}
Let $\M$ and $\M'$ be elements of $\D_K$ and set
    \begin{align*}
        &\M = (A, B) & &\text{with $A, B \in M_m(S_K)$,} \\
        &\M' = (A', B') & &\text{with $A', B' \in M_n(S_K)$.}
    \end{align*}
We give a method for finding a finite spanning set of the $k$-vector space $\Hom_{\widehat{\D}} (\M, \M')$
from the $K$-vector space $\Hom_{\D_K} (\M, \M')$.

\begin{prop}
    \label{prop:HomTheory}
    \begin{items}
        \item \label{prop:HomTheory:1}
        Let $\Sigma$ be a subring of $S_K$ and $F$ a flat $\Sigma$-algebra, and
        set $\widetilde{S} \coloneqq S_K \tens[\Sigma] F$ and $\widetilde{\D} \coloneqq \HMF_{\widetilde{S}} (f \tens 1)$.
        Then as $\widetilde{S} / \exgen{f \tens 1}$-modules
        \[
            \Hom_{\D_K} (\M, \M') \tens[\Sigma] F \cong \Hom_{\widetilde{\D}} (\M, \M').
        \]
        In particular when $\Sigma = K$ and $F = k$, the $R_K$-linear map
        \[
            \begin{array}{ccc}
                \Hom_{\D_K} (\M, \M') & \to & \Hom_{\D} (\M, \M') \\
                (X, Y) & \mapsto & (X, Y)
            \end{array}
        \]
        is a well-defined injection and
        \[ \rk_K \Hom_{\D_K} (\M, \M') = \rk_k \Hom_{\D} (\M, \M'). \]
        \item \label{prop:HomTheory:2} The $R$-linear map
        \[
            \begin{array}{ccc}
                \Hom_{\D} (\M, \M') & \to & \Hom_{\widehat{{\D}}} (\M, \M') \\
                (X, Y) & \mapsto & (X, Y)
            \end{array}
        \]
        is a well-defined isomorphism.
    \end{items}
\end{prop}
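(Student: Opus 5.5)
The plan is to write $\Hom_{\D_K}(\M, \M')$ as the cohomology of a finite complex of free modules, so that both parts reduce to exactness properties of $-\tens[\Sigma] F$ and of completion.

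Consider the $\mathbb{Z}/2$-graded Hom complex between the two matrix factorizations. Concretely, let
\[ C^0 \coloneqq M_{n,m}(S_K)^{\oplus 2} \ni (X, Y), \qquad C^1 \coloneqq M_{n,m}(S_K)^{\oplus 2} \ni (H_A, H_B). \]
Define $d^0 \colon C^0 \to C^1$ by $(X, Y) \mapsto (A'Y - XA,\ B'X - YB)$, so that $\Ker d^0$ is exactly the set of morphisms of matrix factorizations. Define $d^{-1} \colon C^1 \to C^0$ by $(H_A, H_B) \mapsto (H_B B + A' H_A,\ B' H_B + H_A A)$, so that $\Im d^{-1}$ is exactly the null-homotopic morphisms. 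Then
\[ \Hom_{\D_K}(\M, \M') = \Ker d^0 / \Im d^{-1}. \]
This is a finitely presented $S_K$-module, and it is annihilated by $f$, since multiplication by $f$ is null-homotopic via $(H_A, H_B) = (X B, \ldots)$, the standard argument. All matrices involved have entries in $S_K$, and the formation of $d^0$ and $d^{-1}$ commutes with base change along $S_K \to \widetilde{S}$.

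For \cref*{prop:HomTheory:1}: $-\tens[\Sigma] F$ is exact, so tensoring the complex $C^1 \to C^0 \to C^1$ over $\Sigma$ with $F$ preserves kernels, images and quotients. Since $C^i \tens[\Sigma] F$ is the corresponding matrix module over $\widetilde{S}$ and the differentials are given by the same matrices, we get
\[ \Hom_{\D_K}(\M,\M') \tens[\Sigma] F \cong \Hom_{\widetilde{\D}}(\M,\M'). \]
In the special case $\Sigma = K$, $F = k$, we have $\widetilde{S} = S$. Since $k$ is faithfully flat over the field $K$, the natural map $V \to V \tens[K] k$ is injective and preserves dimension, which gives the injectivity and the rank equality. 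Finite-dimensionality of $\Hom_{\D_K}(\M,\M')$ over $K$ should be noted here: it is a finitely generated $R_K$-module whose support is in the singular locus, i.e.\ the origin, because localizing at any other prime makes the Hom vanish (the singularity category of a regular ring is zero). Even without this, the rank equality holds in the sense of possibly infinite cardinals.

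For \cref*{prop:HomTheory:2}, apply the same reasoning with $S \to \widehat{S}$, which is flat. This gives
\[ \Hom_{\widehat{\D}}(\M,\M') \cong \Hom_{\D}(\M,\M') \tens[S] \widehat{S}. \]
By the support argument above, applied over $S$ since the ring is regular away from the origin, $\Hom_{\D}(\M,\M')$ is a finitely generated module supported only at the maximal ideal at the origin, hence of finite length. For a finite-length module $N$ supported at $\mathfrak{m}$, the map $N \to N \tens[S] \widehat{S}$ is an isomorphism. Finally, the identification is compatible with the map $(X, Y) \mapsto (X, Y)$, which is therefore a well-defined isomorphism.

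The main obstacle is the support claim. I would argue it as follows: for a prime $\mathfrak{p} \neq \mathfrak{m}_0$, localization commutes with the Hom complex, and $\Hom_{\HMF_{S_\mathfrak{p}}(f)}$ corresponds to the stable Hom over the regular ring $R_\mathfrak{p}$. By \cref{thm:HMFToMCM}, this is zero because maximal Cohen--Macaulay modules over a regular ring are free. Alternatively, one can use that the Jacobian ideal annihilates Hom (via derivative homotopies), and the Jacobian ideal is $\mathfrak{m}_0$-primary.
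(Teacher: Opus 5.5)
Your proposal is correct and follows essentially the same route as the paper: you express $\Hom$ as $\Ker/\Im$ of explicit $S_K$-linear maps between matrix modules and apply flat base change for the first part. For the second part you use the vanishing of all localizations away from the origin (regularity of $R_q$ together with \cref{thm:HMFToMCM}), so $\Hom_{\D}(\M,\M')$ has finite length, agrees with its base change to $\widehat{S}$, and flat base change finishes the argument. One small slip: the null-homotopy of $f\cdot(X,Y)$ is $(H_A,H_B)=(YB,0)$ (or $(0,XA)$), not one with $H_A=XB$.
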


\begin{proof}
    \cref*{prop:HomTheory:1} Consider the $S_K$-linear maps
    \begin{equation}
        \begin{array}{ccccc}
            M_{n, m}(S_K) \times M_{n, m}(S_K) & \xrightarrow{\beta} & M_{n, m}(S_K) \times M_{n, m}(S_K) & \xrightarrow{\alpha} & M_{n, m}(S_K) \\
            (H_A, H_B) & \mapsto & (H_B \circ B + A' \circ H_A, B' \circ H_B + H_A \circ A) & & \\
            & & (X, Y) & \mapsto & X \circ A - A' \circ Y\rlap{.}
        \end{array}
        \label{eq:HomMatrix}
    \end{equation}
    Then we get $\widetilde{S} / \exgen{f \tens 1}$-linear isomorphisms
    \[
        \Hom_{\D_K} (\M, \M') \tens[\Sigma] F
        \cong \frac{\Ker \alpha}{\Im \beta} \tens[\Sigma] F
        \cong \frac{\Ker (\alpha \tens 1)}{\Im (\beta \tens 1)}
        \cong \Hom_{\widetilde{\D}} (\M, \M').
    \]

    \cref*{prop:HomTheory:2} Take $q = p / \exgen{f} \in \Spec R \setminus \exset{\exgen{x_0, x_1, \ldots, x_d}}$
    with $p \in \Spec S$, and set $\widetilde{S} \coloneqq S_p$ and
    $\widetilde{\D} \coloneqq \HMF_{\widetilde{S}} (f)$.
    Then
    \begin{align*}
        \Hom_{\D} (\M, \M')_q
        &\cong \Hom_{\D} (\M, \M') \tens[S] \widetilde{S} \\
        &\cong \Hom_{\widetilde{\D}} (\M, \M')
            & &\text{by \cref*{prop:HomTheory:1}.}
    \end{align*}
    Since $\widetilde{\D}$ is triangulated equivalent to $\sMCM(R_q)$ and $R_q$ is regular,
    \begin{align*}
        \Hom_{\D} (\M, \M')_q = 0.
    \end{align*}
    This implies the canonical $R$-linear map
    $\Hom_{\D} (\M, \M') \to \Hom_{\D} (\M, \M') \tens[R] \widehat{R}$
    is isomorphic. On the other hand as $\widehat{R}$-modules,
    \begin{align*}
        \Hom_{\D} (\M, \M') \tens[R] \widehat{R} &\cong \Hom_{\D} (\M, \M') \tens[S] \widehat{S} \\
        &\cong \Hom_{\widehat{\D}} (\M, \M')
            & &\text{by \cref*{prop:HomTheory:1}.}
    \end{align*}
    Combining these isomorphisms, we get the desired result.
\end{proof}

This proposition shows the $k$-vector space $\Hom_{\widehat{\D}} (\M, \M')$ turns out to be generated
by a spanning set of the $K$-vector space $\Hom_{\D_K} (\M, \M')$.
Let $\C_K$ denote $\MF_{S_K} (f)$ and consider the $S_K$-linear isomorphism
\[
    \begin{array}{ccc}
        \Hom_{\C_K} (\M, \M')
            & \to
                & \Ker \expar{\begin{pNiceArray}{c:c}[margin]
                    I_n \tens \trans{A} & - A' \tens I_m
                \end{pNiceArray}
                \colon S_K^{2mn} \to S_K^{mn}}\\
        (X, Y)
            & \mapsto
                & \begin{pmatrix}
                    \bm{v}(X)  \\
                    \bm{v}(Y)
                \end{pmatrix}.
    \end{array}
\]
Since a generating set  of
the target can be found using a computer algebra system,
we can construct a generating set $(X_1, Y_1), (X_2, Y_2), \ldots, (X_l, Y_l)$
of the $S_K$-module $\Hom_{\C_K} (\M, \M')$. Set
\begin{align*}
    &C \coloneqq \begin{pNiceArray}{c:c}[margin]
        I_n \tens \trans{A} & B' \tens I_m
    \end{pNiceArray}
        & &\text{(cf. \cref{subsec:Zeros}),} \\
    &N_i \coloneqq \Ker \expar{S_K \xrightarrow{\bm{v}(Y_i)} S_K^{mn} \twoheadrightarrow S_K^{mn} / \Im C}
        & &\text{for $i = 1, 2, \cdots, l$.}
\end{align*}
Then for any $i = 1, 2, \cdots, l$, the $K$-linear map
\[
    \begin{array}{ccc}
        S_K / N_i & \to & \Hom_{\D_K} (\M, \M') \\
        g  & \mapsto & g \cdot (X_i, Y_i)
    \end{array}
\]
is a well-defined injection and,
considering $\D_K$ is $\Hom$-finite, we can also find a generating set 
$g^{(1)}_i, g^{(2)}_i, \ldots, \allowbreak g^{(l_i)}_i$ of the source $K$-vector space using a computer algebra system.
Consequently,
\[ \Hom_{\widehat{\D}} (\M, \M') = \ingen{g^{(j)}_i \cdot (X_i, Y_i)}{1 \leq i \leq l \wedge 1 \leq j \leq l_i}_k. \]
Note that this $k$-vector space is spanned by morphisms in $\D_K$.

\subsection{Linear relations among homomorphisms}
Let $\M$ and $\M'$ be elements of $\D_K$ and set
    \begin{align*}
        &\M = (A, B) & &\text{with $A, B \in M_m(S_K)$,} \\
        &\M' = (A', B') & &\text{with $A', B' \in M_n(S_K)$.}
    \end{align*}
We give a method for finding the linear relations among the morphisms
$(X_1, Y_1), (X_2, Y_2), \ldots, (X_l, Y_l) \in \Hom_{\D_K} (\M, \M')$.
Consider $S_K$-linear maps \cref{eq:HomMatrix} again and set the $S_K$-linear isomorphism
\[
    \begin{array}{ccc}
        M_{n, m}(S_K) \times M_{n, m}(S_K) & \xrightarrow{\bm{v}} & S_K^{2mn} \\
        (X, Y)
            & \mapsto 
            & \begin{pmatrix}
                \bm{v}(X) \\
                \bm{v}(Y)
            \end{pmatrix}\rlap{.}
    \end{array}
\]
Then in $\mod S_K$,
\[
    \Hom_{\D_K} (\M, \M')
    \cong \dfrac{\Ker \alpha}{\Im \beta}
    \cong \dfrac{\bm{v}(\Ker \alpha)}{\bm{v}(\Im \beta)}
    \hookrightarrow \dfrac{S_K^{2mn}}{\bm{v}(\Im \beta)}.
\]
On the other hand, fixing a global module ordering on the $S_K$-module $S_K^{2mn}$,
we get the well-defined $K$-linear injection
\[
    \begin{array}{ccc}
        S_K^{2mn} / \bm{v}(\Im \beta)
            & \to
                & S_K^{2mn}\\
        \bm{w}
            & \mapsto
                & \NF(\bm{w})\rlap{,}
    \end{array}
\]
where $\NF \colon S_K^{2mn} \to S_K^{2mn}$ is the normal form
with respect to a Gr\"{o}bner basis of $\bm{v}(\Im \beta)$ (e.g., \cite[Excercise~2.3.3]{MR2363237}).
Therefore, finding the linear relations among the morphisms
$(X_1, Y_1), (X_2, Y_2), \ldots, (X_l, Y_l) \in \Hom_{\D_K} (\M, \M')$ reduces to calculating
the linear relations among the vectors
$\NF(\bm{v}(X_1, Y_1)), \NF(\bm{v}(X_2, Y_2)), \allowbreak \ldots, \NF(\bm{v}(X_l, Y_l)) \in S_K^{2mn}$,
which can be done by using a computer algebra system.

\subsection{Zero morphisms}
\label{subsec:Zeros}
We give a condition for a morphism in $\D_K$ to be zero in $\widehat{\D}$.
Let $(X, Y) \colon \M \to \M'$ be a morphism in $\D_K$ and set
\begin{align*}
    &\M = (A, B) & &\text{with $A, B \in M_m(S_K)$,} \\
    &\M' = (A', B') & &\text{with $A', B' \in M_n(S_K)$.}
\end{align*}
By \cref{prop:HomTheory}, the morphism $(X, Y)$ is zero in $\widehat{\D}$ if and only if so is it in $\D_K$.
The map
\[
    \begin{array}{ccc}
        \inset{(H_A, H_B) \in M_{m, n}(S_K)^2}{
            \begin{array}{ll}
                \text{$(H_A, H_B)$ is a null-} \\
                \text{homotopy of $(X, Y)$}
            \end{array}
        }
            & \to
                & \inset{\bm{h} \in S_K^{2mn}}{
                    \begin{pNiceArray}{c:c}[margin]
                        I_n \tens \trans{A} & B' \tens I_m
                    \end{pNiceArray}
                    \bm{h}
                    = \bm{v}(Y)
                }\\
        (H_A, H_B)
            & \mapsto
                & \begin{pmatrix}
                    \bm{v}(H_A)  \\
                    \bm{v}(H_B)
                \end{pmatrix}
    \end{array}
\]
being bijective, the morphism $(X, Y)$ is zero in $\widehat{\D}$ precisely when
\[
    \inset{\bm{h} \in S_K^{2mn}}{
        \begin{pNiceArray}{c:c}[margin]
            I_n \tens \trans{A} & B' \tens I_m
        \end{pNiceArray}
        \bm{h}
        = \bm{v}(Y)
    }
    \neq \emptyset.
\]
Whether this condition holds or not can be verified by computer calculation.

\subsection{Isomorphisms}
We give a condition for two matrix factorizations in $\D_K$
of the same size and with no constant terms in their entries
to be isomorphic in $\widehat{\D}$.
Note that $X \in M_n (\widehat{S})$ is invertible if and only if so is $X_0 \in M_n (k)$.
Then the following lemma is straightforward.

\begin{lem}
    \label{lem:IsomTheory}
    Let $(X, Y) \colon \M \to \N$ be a morphism in $\widehat{\D}$
    between matrix factorizations of the same size and with no constant terms in their entries.
    Then the following statements are equivalent\textup{:}
    \begin{items}
        \item $(X, Y)$ is an isomorphism\textup{;}
        \item $X_0$ and $Y_0$ are invertible.
    \end{items}
\end{lem}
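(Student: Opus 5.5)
The plan is to prove both implications by reducing everything to the degree-zero parts $X_0, Y_0$. Write $\M = (A, B)$ and $\N = (A', B')$, both of size $n$. The hypothesis that no entry has a constant term means $A_0 = B_0 = A'_0 = B'_0 = 0$. The key observation is that homotopies do not change $X_0$ and $Y_0$. If $(X', Y')$ is homotopic to $(X, Y)$ via $(H_A, H_B)$, then by \cref{eq:Htp1} and \cref{eq:Htp2} we have $X - X' = H_B B + A' H_A$ and $Y - Y' = B' H_B + H_A A$. Every term on the right has entries in the maximal ideal of $\widehat{S}$, so $X_0 = X'_0$ and $Y_0 = Y'_0$. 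Hence the condition in (ii) depends only on the homotopy class, and the statement is meaningful in $\widehat{\D}$.

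For (ii) $\Rightarrow$ (i), I would use the remark preceding the lemma: a matrix over $\widehat{S}$ is invertible precisely when its constant part is. So $X$ and $Y$ are invertible in $M_n(\widehat{S})$. From $X B = B' Y$ and $Y A = A' X$ we get $B X^{-1} = Y^{-1} B'$ and $A Y^{-1} = X^{-1} A'$. Thus $(X^{-1}, Y^{-1})$ is a morphism $\N \to \M$ that is a strict two-sided inverse already in $\MF_{\widehat{S}}(f)$, and hence also in $\widehat{\D}$.

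For (i) $\Rightarrow$ (ii), let $(X', Y') \colon \N \to \M$ be an inverse in $\widehat{\D}$. Then $(X'X, Y'Y)$ is homotopic to $(I_n, I_n)$ and $(XX', YY')$ is homotopic to $(I_n, I_n)$. By the first observation, $X'_0 X_0 = I_n$ and $Y'_0 Y_0 = I_n$, where we use $(PQ)_0 = P_0 Q_0$, which holds because substituting $0$ is a ring homomorphism $\widehat{S} \to k$. Therefore $X_0$ and $Y_0$ are invertible.

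I expect no real obstacle here. The only point needing care is that the null-homotopy terms lie in the maximal ideal. This is exactly where the hypothesis of no constant terms is used, and it is also why the equal-size assumption is needed: it makes $X_0$ and $Y_0$ square matrices.
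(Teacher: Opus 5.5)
Your proof is correct and is exactly the argument the paper leaves as ``straightforward'' after noting that $X \in M_n(\widehat{S})$ is invertible if and only if $X_0$ is. That criterion gives (ii)$\Rightarrow$(i), and the observation that null-homotopy terms lie in the maximal ideal gives (i)$\Rightarrow$(ii). One harmless slip: with the paper's convention \cref{eq:Mor} the commutation relations are $YB = B'X$ and $XA = A'Y$ (you wrote them with $X$ and $Y$ swapped), and your identities $BX^{-1} = Y^{-1}B'$ and $AY^{-1} = X^{-1}A'$ follow from these correct relations, not from the ones you stated.
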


Furthermore, a simple observation yields the next lemma.

\begin{lem}
    \label{lem:Solution}
    Let $g_1, g_2, \ldots, g_l \in S_K$ and set an ideal $I \coloneqq \exgen{g_1, g_2, \ldots, g_l}$.
    Then the following statements are equivalent\textup{:}
    \begin{items}
        \item The system of equations
        \[
            \left\{
            \begin{aligned}
                g_1(x_0, x_1, \ldots, x_d) &= 0 \\
                g_2(x_0, x_1, \ldots, x_d) &= 0 \\
                &\vdots \\
                g_l(x_0, x_1, \ldots, x_d) &= 0
            \end{aligned}
            \right.
        \]
        admits a solution in $k^{d + 1}$\textup{;}
        \item $I \neq S_K$.
    \end{items}
\end{lem}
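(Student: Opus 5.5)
The plan is to derive the lemma from Hilbert's weak Nullstellensatz over the algebraically closed field $k$. The one point needing care is the passage from the ideal $I$ of $S_K$ to its extension $IS$ in $S = k[x_0, x_1, \ldots, x_d]$, since the solutions are sought in $k^{d+1}$ while $I$ lives over the finite field extension $K$.

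For (i)$\Rightarrow$(ii), I argue by contraposition. Suppose $I = S_K$. Then $1 = \sum_{i=1}^l h_i g_i$ for some $h_i \in S_K$. Evaluating this identity at a putative common zero $a \in k^{d+1}$ of the $g_i$ gives $1 = 0$ in $k$. Hence no solution exists.

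For (ii)$\Rightarrow$(i), suppose $I \neq S_K$, so that $S_K / I$ is a nonzero $K$-vector space.
\begin{enumerate}
    \item Since $S = S_K \tens[K] k$, I have $S / IS \cong (S_K / I) \tens[K] k$. Tensoring a nonzero vector space over a field with a field extension gives a nonzero vector space, so $S / IS \neq 0$. Equivalently, $IS \neq S$; this is the faithful flatness of $S_K \to S$, compare \cref{prop:HomTheory}.
    \item Choose a maximal ideal $\mathfrak{m}$ of $S$ containing $IS$.
    \item Because $k$ is algebraically closed, the weak Nullstellensatz gives $\mathfrak{m} = \exgen{x_0 - a_0, x_1 - a_1, \ldots, x_d - a_d}$ for some $a = (a_0, \ldots, a_d) \in k^{d+1}$.
    \item Each $g_i$ lies in $IS \subset \mathfrak{m}$, so $g_i(a) = 0$ for all $i$. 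Thus $a$ solves the system.
\end{enumerate}

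I do not expect a genuine obstacle here. The only step deserving a remark is the base change from $K$ to $k$ in step 1, which is needed because the Nullstellensatz applies over $k$ but not over $K$. The practical content of the lemma is that checking $1 \notin I$ can be done by a Gr\"obner basis computation over $K$, where $I$ is generated by the entries of $\det X_0$ and $\det Y_0$ together with the defining relations of a morphism. This is presumably how the lemma is combined with \cref{lem:IsomTheory} to decide isomorphism in $\widehat{\D}$.
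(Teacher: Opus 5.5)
Your proof is correct and matches what the paper intends: the paper gives no written argument and only calls the lemma ``a simple observation.'' That observation is precisely the weak Nullstellensatz over $k$ combined with the base change $S/IS \cong (S_K/I) \otimes_K k$, which you make explicit. One small correction to your closing aside: in the paper's application the ideal is $\langle \mu_X \det(X_0) - 1, \mu_Y \det(Y_0) - 1 \rangle$ in $K[\lambda_1, \ldots, \lambda_l, \mu_X, \mu_Y]$, and no extra morphism relations are needed because $(X, Y) = \sum_i \lambda_i \phi_i$ is automatically a morphism.
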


Let $\M$ and $\N$ be matrix factorizations in $\D_K$ of the same size and with no constant terms in their entries, and set
\[ \Hom_{\D_K} (\M, \N) = \exgen{\phi_1, \phi_2, \ldots, \phi_l}_K. \]
Consider the polynomial ring $K[\lambda_1, \lambda_2, \ldots, \lambda_l, \mu_X, \mu_Y]$ and let
\[ (X, Y) \coloneqq \lambda_1 \cdot \phi_1 + \lambda_2 \cdot \phi_2 + \cdots + \lambda_l \cdot \phi_l. \]
Then by \cref{lem:IsomTheory} and \cref{lem:Solution}, $\M$ and $\N$ are isomorphic in $\widehat{\D}$ precisely when
\[
    \exgen{\mu_X \det(X_0) - 1, \mu_Y \det(Y_0) - 1}
    \neq K[\lambda_1, \lambda_2, \ldots, \lambda_l, \mu_X, \mu_Y].
\]
Whether this condition holds or not can be verified by computer calculation.

\subsection{Radical morphisms}
Let $\M$ and $\N$ be matrix factorizations in $\D_K$ 
which are indecomposable in $\widehat{\D}$ and have no constant terms in their entries.
We give a method for finding a finite spanning set of the $k$-vector space
$\rad_{\widehat{\D}}^n (\M, \N)$ for $n \geq 1$ from the $K$-vector space $\rad_{\D_K}^n (\M, \N)$.
Since the ideals $\rad_{\widehat{\D}}^n$ for $n \geq 2$ can be computed recursively from $\rad_{\widehat{\D}}$
and $\rad_{\widehat{\D}} (\M, \N) = \Hom_{\widehat{\D}} (\M, \N)$ when $\M$ and $\N$ are not isomorphic,
it suffices to consider $\rad_{\widehat{\D}} (\M, \M)$.
Note that since $k$ is an algebraically closed field, as $k$-vector spaces
\begin{equation}
    \End_{\widehat{\D}} (\M) \cong k \cdot \id_{\M} \oplus \rad_{\widehat{\D}} (\M, \M).
    \label{eq:EndDecomp}
\end{equation}

\begin{lem}
\label{lem:MorDecomp}
Let $(X, Y) \colon \M \to \M$ be a morphism in $\D_K$ and
\[ (X, Y) = \lambda \cdot \id_{\M} + (X', Y') \]
be a decomposition along \cref{eq:EndDecomp}.
Then $\lambda$ belongs to $K$ and is the unique eigenvalue of $X_0$ and $Y_0$.
\end{lem}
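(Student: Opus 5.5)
Write $(X', Y') \in \rad_{\widehat{\D}} (\M, \M)$. The plan is to show that $X'_0$ and $Y'_0$ are nilpotent. Then $X_0 = \lambda I + X'_0$ and $Y_0 = \lambda I + Y'_0$ have $\lambda$ as their unique eigenvalue. Since $X_0$ and $Y_0$ have entries in $K$, this unique eigenvalue is a root of a characteristic polynomial over $K$ of the form $(t-\lambda)^m$. From this I want to conclude that $\lambda \in K$.

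\textbf{Step 1: nilpotency.} Since $\End_{\widehat{\D}}(\M)$ is finite-dimensional over $k$ and $\M$ is indecomposable, the endomorphism ring is local and its radical is nilpotent. So $(X',Y')^N = 0$ in $\widehat{\D}$ for some $N$, i.e.\ $(X'^N, Y'^N)$ is null-homotopic: $X'^N = H_B B + A H_A$. Because the entries of $A$ and $B$ have no constant terms, substituting $0$ gives $(X'_0)^N = 0$, and likewise $(Y'_0)^N = 0$.

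One subtlety is that $(X',Y')$ is only a homotopy class. A different representative changes $X'$ by $H_B B + A H_A$, which vanishes at the origin. Hence $X'_0$ and $Y'_0$ are well defined, and the argument does not depend on the choice of representative.

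\textbf{Step 2: rationality of $\lambda$.} The characteristic polynomial of $X_0$ is $(t-\lambda)^m$ and has coefficients in $K$. If $\operatorname{char} K = 0$, the coefficient of $t^{m-1}$ is $-m\lambda \in K$, so $\lambda \in K$. In positive characteristic this fails when $p \mid m$, and this is the main obstacle.

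To handle it, I would avoid eigenvalues and argue linearly. Write $(X,Y) = \sum_i c_i \phi_i$ with $c_i \in K$, where the $\phi_i$ form a $K$-basis of $\End_{\D_K}(\M)$. By \cref{prop:HomTheory}, this basis is also a $k$-basis of $\End_{\widehat{\D}}(\M)$. The map $\epsilon \colon \End_{\widehat{\D}}(\M) \to k$, sending an endomorphism to its scalar part along \cref{eq:EndDecomp}, is a $k$-algebra homomorphism; its kernel is the radical.

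The values $\epsilon(\phi_i)$ lie in $K$. Each $\phi_i$ has entries over $K$, so by Step 1 its scalar part is the unique eigenvalue of $(X_{\phi_i})_0$. Thus I would also need $\epsilon(\phi_i) \in K$ for these basis elements, so the same issue remains. To fix it, I would use that the radical is stable under $\mathrm{Gal}(\bar K/K)$, or more concretely $\operatorname{Aut}(k/K)$ acting on coefficients. This action preserves $\D_K$-morphisms and radicality, since radicality is defined intrinsically.

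Then $\sigma(\lambda)\id + \sigma(X',Y') = \sigma(X,Y) = (X,Y)$. By uniqueness of the decomposition, $\sigma(\lambda) = \lambda$ for all $\sigma$. Hence $\lambda$ is purely inseparable over $K$. Since $K$ is finite, hence perfect, it follows that $\lambda \in K$.
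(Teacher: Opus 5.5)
Your proof is correct. In characteristic $0$ it matches the paper's trace argument, but in positive characteristic you take a different and heavier route.

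The paper uses a Frobenius trick. Since $K$ is finite, the eigenvalue $\lambda$ of $X_0\in M_m(K)$ satisfies $\lambda^{p^a}=\lambda$ for some $a$. Choosing $q=p^{ab}$ with $(X'_0)^q=0$ gives $X_0^q=(\lambda I_m+X'_0)^q=\lambda^q I_m+(X'_0)^q=\lambda I_m$, so $\lambda I_m\in M_m(K)$.

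You instead let $\mathrm{Aut}(k/K)$ act coefficientwise on $\End_{\widehat{\D}}(\M)$. This action fixes $(X,Y)$ and preserves the radical, so uniqueness of the decomposition forces $\sigma(\lambda)=\lambda$. You then identify the fixed field of $\mathrm{Aut}(k/K)$ with the perfect closure of $K$. That is valid, but it needs the extension of $K$-embeddings of $K(\lambda)$ to automorphisms of the algebraically closed field $k$. This is more machinery than the paper's one-line matrix computation.

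The obstacle you identified is not real. Your opening observation already finishes the proof. The minimal polynomial of $\lambda$ over $K$ divides $(t-\lambda)^m$, so $\lambda$ is its only root. Since $K$ is perfect (a number field or a finite field), that minimal polynomial is separable, hence equal to $t-\lambda$, and $\lambda\in K$ in every characteristic.

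Your Step~1 supplies something the paper leaves implicit. The paper simply calls $X_0=\lambda I_m+X'_0$ a Jordan decomposition. You actually justify that $X'_0$ and $Y'_0$ are nilpotent: the radical of the finite-dimensional local ring $\End_{\widehat{\D}}(\M)$ is nilpotent, and $H_BB+AH_A$ vanishes at the origin because $A$ and $B$ have no constant terms. You also check that $X'_0$ does not depend on the chosen representative of the homotopy class.
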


\begin{proof}
    Let $m \in \ZZ_{\geq 1}$ be the size of $\M$. Since
    \begin{align}
        &X_0 = \lambda \cdot I_m + X'_0, \label{eq:Jordan} \\
        &Y_0 = \lambda \cdot I_m + Y'_0 \notag
    \end{align}
    are Jordan decompositions, $\lambda \in k$ is the unique eigenvalue of $X_0$ and $Y_0$.
    In order to show that $\lambda$ belongs to $K$, we divide into two cases according to the characteristic $p$ of $k$.
    If $p = 0$, taking the trace of both sides of \cref{eq:Jordan}, we obtain
    \[ \lambda = \dfrac{\tr(X_0)}{m} \in K. \]
    Otherwise, there exists a positive integer $a$ such that
    \[ \lambda^{p^a} = \lambda. \]
    Also, since $X'_0 \in M_m(k)$ is nilpotent, there exists a positive integer $b$ such that
    \begin{align*}
        &(X'_0)^{q} = 0 & &\text{where $q \coloneqq p^{ab}$.}
    \end{align*}
    Therefore,
    \begin{align*}
        \lambda \cdot I_m
        = \lambda^{q} \cdot I_m
        = \lambda^{q} \cdot I_m + (X'_0)^{q}
        = X_0^{q}
        \in M_m(K).
    \end{align*}
    This implies $\lambda \in K$.
\end{proof}

Set
\begin{align*}
    \End_{\D_K} (\M) = \exgen{(X_1, Y_1), (X_2, Y_2), \ldots, (X_l, Y_l)}_K,
\end{align*}
and let $\lambda_i \in K$ be the unique eigenvalue of $(X_i)_0$ and $(Y_i)_0$ for $i = 1, 2, \ldots, l$.
Then by \cref{lem:MorDecomp},
\[
    \rad_{\widehat{\D}} (\M, \M) =\exgen{
        (X_1 - \lambda_1, Y_1 - \lambda_1),
        (X_2 - \lambda_2, Y_2 - \lambda_2),
        \ldots,
        (X_l - \lambda_l, Y_l - \lambda_l)
    }_k.
\]
Note that this $k$-vector space is spanned by morphisms in $\D_K$.

\subsection{Auslander--Reiten triangles}
Let $\M$ be a matrix factorization in $\D_K$ which is indecomposable in $\widehat{\D}$
and has no constant terms in its entries.
We give a method for constructing the Auslander--Reiten triangle in $\widehat{\D}$
\begin{align*}
    \begin{tikzcd}[ampersand replacement=\&]
        \tau(\M) \ar{r}{} \&
            \mathcal{L} \ar{r}{} \&
                \M \ar{r}{} \&
                    \tau(\M)[1]\rlap{,}
    \end{tikzcd}
\end{align*}
where $\tau = [d - 2]$ is the Auslander--Reiten translation (e.g., \cite[Theorem~2.47]{MR2484725}).
Let the socle of the right $\End_{\widehat{\D}} (\M)$-module $\Hom_{\widehat{\D}} (\M, \tau(\M)[1])$
be denoted by $\Soc(\Hom_{\widehat{\D}} (\M, \tau(\M)[1]))$.
As is well known (e.g., \cite[Theorem~13.8]{MR2919145}), $\rk_k \Soc(\Hom_{\widehat{\D}} (\M, \tau(\M)[1])) = 1$
and the following proposition holds.

\begin{prop}
    Let
    \begin{align}
        &\begin{tikzcd}[ampersand replacement=\&]
            \tau(\M) \ar{r}{} \&
                \mathcal{L} \ar{r}{} \&
                    \M \ar{r}{\psi} \&
                        \tau(\M)[1]
        \end{tikzcd}
        \label{eq:TargetTri}
    \end{align}
    be an exact triangle in $\widehat{\D}$.
    Then the following statements are equivalent\textup{:}
    \begin{items}
        \item The exact triangle \cref{eq:TargetTri} is an Auslander--Reiten triangle\textup{;}
        \item The morphism $\psi$ is a non-zero element of $\Soc(\Hom_{\widehat{\D}} (\M, \tau(\M)[1]))$.
    \end{items}
\end{prop}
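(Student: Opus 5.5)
We reduce the statement to the standard characterization of Auslander--Reiten triangles in a Hom-finite Krull--Schmidt triangulated category with Serre functor $\mathbb{S} = \tau[1]$. By definition, \cref{eq:TargetTri} is an Auslander--Reiten triangle exactly when $\tau(\M)$ and $\M$ are indecomposable, $\psi \neq 0$, and every non-split-epimorphism $\phi\colon \mathcal{N} \to \M$ satisfies $\psi \circ \phi = 0$. Since $\tau = [d-2]$ is an autoequivalence, $\tau(\M)$ is indecomposable along with $\M$. It therefore suffices to show that a nonzero $\psi$ lies in the socle if and only if it is annihilated by precomposition with every non-split epimorphism.

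First we reduce the test to endomorphisms. Every non-split epimorphism into $\M$ factors through the right almost split map, and in particular all such maps $\mathcal{N} \to \M$ lie in $\rad_{\widehat{\D}}(-,\M)$. We will use Serre duality, which is $\Hom_{\widehat{\D}}(\mathcal{N}, \tau(\M)[1]) \cong D\Hom_{\widehat{\D}}(\M, \mathcal{N})$ naturally in $\mathcal{N}$ (\cite[Theorem~13.8]{MR2919145}). Let $\eta \in D\End_{\widehat{\D}}(\M)$ be the functional corresponding to $\psi$. Then $\psi \circ \phi$ corresponds to the functional $u \mapsto \eta(\phi \circ u)$ on $\Hom_{\widehat{\D}}(\M,\mathcal{N})$.

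Hence $\psi\circ\phi = 0$ for all radical $\phi\colon\mathcal{N}\to\M$ and all $\mathcal{N}$ if and only if $\eta$ vanishes on $\rad_{\widehat{\D}}(\mathcal{N},\M)\circ\Hom_{\widehat{\D}}(\M,\mathcal{N})$ for all $\mathcal{N}$. This composite is contained in $\rad_{\widehat{\D}}(\M,\M)$, and taking $\mathcal{N}=\M$ shows it equals $\rad_{\widehat{\D}}(\M,\M)$. The condition is thus $\eta(\rad_{\widehat{\D}}(\M,\M)) = 0$.

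On the other hand, $\psi$ lies in $\Soc$ of the right $\End_{\widehat{\D}}(\M)$-module $\Hom_{\widehat{\D}}(\M,\tau(\M)[1])$ exactly when $\psi\cdot J = 0$, where $J = \rad_{\widehat{\D}}(\M,\M)$ is the Jacobson radical of the local ring $\End_{\widehat{\D}}(\M)$ (see \cref{eq:EndDecomp}). By the computation above, this is again the statement $\eta(J) = 0$. So (ii) holds if and only if $\psi \neq 0$ kills all radical maps into $\M$, which is (i).

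The main obstacle is the precise use of naturality of Serre duality in the first variable, in particular checking that the $\End(\M)$-module structure given by precomposition corresponds to the dual of postcomposition on $\End(\M)$. This is the step we would verify carefully. The remaining steps are formal. The fact that the socle is one-dimensional, which follows from $D(\End(\M)/J) \cong k$, shows that the Auslander--Reiten triangle is unique up to isomorphism. It is not needed for the equivalence itself.
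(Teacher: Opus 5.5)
Your proof is correct. The paper gives no argument of its own here: it states the proposition as well known, citing \cite[Theorem~13.8]{MR2919145}. Your argument is the standard one behind that citation. You use the bifunctorial Auslander--Reiten (Serre) duality $\Hom_{\widehat{\D}}(-,\tau(\M)[1]) \cong \Hom_k(\Hom_{\widehat{\D}}(\M,-),k)$ to reduce both conditions to the vanishing of the functional $\eta$ on $\rad_{\widehat{\D}}(\M,\M)$.

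One justification needs repair. You should not derive ``every non-split epimorphism $\phi\colon\mathcal{N}\to\M$ lies in $\rad_{\widehat{\D}}(\mathcal{N},\M)$'' by factoring through a right almost split map, since that presupposes an Auslander--Reiten triangle ending at $\M$. The claim follows directly from locality of $\End_{\widehat{\D}}(\M)$: $\phi$ is a split epimorphism iff $\phi\circ\chi$ is invertible for some $\chi$, iff $\phi\notin\rad_{\widehat{\D}}(\mathcal{N},\M)$.

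Finally, the naturality in the first variable that you flag as the main obstacle is part of what a Serre functor (equivalently, Auslander--Reiten duality) provides. No separate verification is needed there.
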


Therefore, it suffices to find a non-zero morphism $\psi \colon \M \to \tau(\M)[1]$ in $\widehat{\D}$
such that $\psi \circ \rad_{\widehat{\D}}(\M, \M) = 0$.
Let $\phi_1, \phi_2, \ldots, \phi_m$ (resp. $\psi_1, \psi_2, \ldots, \psi_n$) constitute a basis of
$K$-vector space $\rad_{\D_K} (\M, \M)$ (resp. $\Hom_{\D_K} (\M, \tau(\M)[1])$).
For each $i = 1, 2, \ldots, m$, using a computer algebra system, we can find the unique matrix
$C_i \in M_n(K)$ such that
\[
    \begin{pmatrix}
        \psi_1 \circ \phi_i & \psi_2 \circ \phi_i & \cdots & \psi_n \circ \phi_i
    \end{pmatrix}
    = \begin{pmatrix}
        \psi_1 & \psi_2 & \cdots & \psi_n
    \end{pmatrix}
    C_i.
\]
Set
\[
    C \coloneqq
    \begin{pmatrix}
        C_1 \\
        C_2 \\
        \vdots \\
        C_n
    \end{pmatrix}
    \in M_{mn, n}(K).
\]
Then we get the $k$-linear isomorphism
\[
    \begin{array}{ccc}
        \Ker \expar{C \colon k^n \to k^{mn}}
            & \to
                & \Soc(\Hom_{\widehat{\D}} (\M, \tau(\M)[1])) \\
        \trans{
            \begin{pmatrix}
                \lambda_1 & \lambda_2 & \cdots & \lambda_n
            \end{pmatrix}
        }
            & \mapsto
                & \sum\limits_{j = 1}^n \lambda_j \cdot \psi_j\rlap{.}
    \end{array}
\]
Since $\Soc(\Hom_{\widehat{\D}} (\M, \tau(\M)[1])) \neq 0$, it follows that
$\Ker \expar{C \colon k^n \to k^{mn}} \neq 0$ (i.e., $\rk C < n$).
This implies $\Ker \expar{C \colon K^n \to K^{mn}} \neq 0$.
Consequently, by taking a non-zero element
$\trans{\begin{pmatrix}
    \lambda_1 & \lambda_2 & \cdots & \lambda_n
\end{pmatrix}}$
of this $K$-vector space using a computer algebra system,
we find that $\psi \coloneqq \sum_{j = 1}^n \lambda_j \cdot \psi_j$ is
a non-zero element of $\Soc (\Hom_{\widehat{\D}} (\M, \tau(\M)[1]))$,
which induces the desired Auslander--Reiten triangle.

\subsection{Indecomposable decompositions}
\label{subsec:IndecompDecomps}
Let $\M$ and $\N$ be matrix factorizations in $\D_K$ such that
$\M$ is indecomposable and a direct summand of $\N$ in $\widehat{\D}$,
and assume $\M$ has no constant terms in its entries.
We give a method for constructing a complement of $\M$ which belongs to $\D_K$.
Let $\iota \colon \M \to \N$ and $\pi \colon \N \to \M$ denote the canonical morphisms in $\widehat{\D}$ and set
\begin{align*}
    &\iota = \sum_{i = 1}^m \lambda_i \cdot \phi_i
        & &\text{with $\lambda_1, \lambda_2, \ldots, \lambda_m \in k$}, \\
    &\pi = \sum_{j = 1}^n \mu_j \cdot \psi_j
        & &\text{with $\mu_1, \mu_2, \ldots, \mu_n \in k$},
\end{align*}
where
\begin{align*}
    \Hom_{\D_K} (\M, \N) = \exgen{\phi_1, \phi_2, \ldots, \phi_m}_K
    \text{ and }
    \Hom_{\D_K} (\N, \M) = \exgen{\psi_1, \psi_2, \ldots, \psi_n}_K.
\end{align*}
Then
\[ \sum_{i, j} (\lambda_i \mu_j) \cdot (\psi_j \circ \phi_i) = \id_{\M}.  \]
Since $\End_{\widehat{\D}}(\M)$ is a local ring, there exist some $i = 1, 2, \ldots, m$ and
$j = 1, 2, \ldots, n$ such that $\psi_j \circ \phi_i$ is an isomorphism.
We can find such $i$ and $j$ by using a computer algebra system and obtain
\[ \expar{(\psi_j \circ \phi_i)^{-1} \circ \psi_j} \circ \phi_i = \id_{\M}. \]
This implies that $\phi_i$ is a splitting monomorphism. Therefore, considering the exact triangle in $\widehat{\D}$
\begin{align*}
    \begin{tikzcd}[ampersand replacement=\&]
        \M \ar{r}{\phi_i} \&
            \N \ar{r}{} \&
                \Cone(\phi_i) \ar{r}{} \&
                    \M[1]
    \end{tikzcd}
\end{align*}
leads us to get a complement $\Cone(\phi_i)$ of $\M$, which belongs to $\D_K$.

\section{Main results}
\label{sec:MainResults}
Let $(X, p)$ be a rational double point and $R$ denote $\widehat{\strsh}_{X, p}$.
In this section, we determine the indecomposable objects corresponding to the vertices in the Auslander--Reiten quiver
of the singularity category $\DD{sg} (R)$, which is triangulated equivalent to $\sMCM(R)$ and $\HMF_S (f)$,
where $S \coloneqq k \exdbra{x, y, z}$ and $f$ is one of the polynomials listed in \cref{tab:RDPEq}
such that $R \cong S / \exgen{f}$.
For convenience, let types $D_n$ $(n \geq 4)$, $E_6$, $E_7$ and $E_8$ in \cref{tab:RDPEq} also be denoted by
$D_n^0$, $E_6^0$, $E_7^0$ and $E_8^0$ respectively.

For types $E_n^r$, since $n$ and $r$ can only take finitely many values,
we obtain our first main theorem by applying the techniques described in \crefrange{subsec:Homs}{subsec:IndecompDecomps}
with the aid of the computer algebra system \Singular{} (\cite{DGPS}).

\begin{thm}
    \label{thm:Main1}
    The matrix factorizations $\M_1, \M_2, \ldots, \M_n$ of type $E_n^r$ in \crefrange{subsec:E_6^r}{subsec:E_8^rInChar2}
    constitute a complete set of pairwise non-isomorphic indecomposable objects in $\HMF_S (f)$ of type $E_n^r$,
    and the Auslander--Reiten quiver is given by
    \[  \]
\end{thm}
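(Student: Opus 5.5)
The plan is to reduce the statement to a finite, checkable list of computations, one for each pair $(n, r)$ and each relevant characteristic. We use \cref{cor:RDPQuiver} to know the shape of the answer in advance and the methods of \cref{sec:MF} to verify it. For a fixed type $E_n^r$, \cref{cor:RDPQuiver} tells us that $\sMCM(R) \simeq \HMF_S(f)$ (by \cref{thm:HMFToMCM}) has exactly $n$ non-projective indecomposables. Their Auslander--Reiten quiver is the double quiver of $E_n$ with $\tau = \id$ (here $d = 2$, so $\tau = [0]$). It therefore suffices to show three things for the listed $\M_1, \ldots, \M_n$, all defined over a finite field or number field $K$ and with entries having no constant terms:
\begin{enumerate}
\item each $\M_i$ is indecomposable in $\widehat{\D}$;
\item the $\M_i$ are pairwise non-isomorphic;
\item the irreducible morphisms among them realize the arrows of the displayed $E_n$ labeling.
\end{enumerate}
Once (1) and (2) hold, completeness is automatic: we have $n$ distinct indecomposables and there are exactly $n$.

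For (1), by \cref{prop:HomTheory} I compute a $K$-basis of $\End_{\D_K}(\M_i)$ as in \cref{subsec:Homs}. Then, using \cref{lem:MorDecomp}, I check that every basis element $(X, Y)$ has $X_0$ and $Y_0$ with a single eigenvalue $\lambda$. I then check that the elements $(X - \lambda, Y - \lambda)$ span a subspace of codimension one that is closed under composition and consists of nilpotent elements. This shows that $\End_{\widehat{\D}}(\M_i)$ is local. For (2), I apply the isomorphism criterion following \cref{lem:Solution}: for $i \neq j$ of equal size, I show with a Gröbner basis that $\exgen{\mu_X \det(X_0) - 1, \mu_Y \det(Y_0) - 1}$ is the unit ideal. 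Objects of different size are also handled by this criterion. Alternatively, and more cheaply, I distinguish the $\M_i$ by the dimensions $\rk_k \Hom_{\widehat{\D}}(\M_i, \M_j)$.

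For (3), for each $i$ I construct the Auslander--Reiten triangle $\M_i \to \mathcal{L}_i \to \M_i \to \M_i[1]$ via a non-zero socle element $\psi$ as in the Auslander--Reiten triangle subsection, and take $\mathcal{L}_i$ to be the cone of $\psi[-1]$. Using \cref{subsec:IndecompDecomps} repeatedly, I split off the neighbours predicted by the labeling: for each $\M_j$ adjacent to $\M_i$, I find $\phi$ and $\psi$ with $\psi \circ \phi$ invertible, checked via \cref{lem:IsomTheory}. I continue until the remaining complement is zero, which I detect by checking that its identity is null-homotopic as in \cref{subsec:Zeros}. This identifies the middle term as $\bigoplus_{j \sim i} \M_j$ and hence fixes the arrows. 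Together with \cref{thm:IrrConfig} as a consistency check, this matches the displayed quiver.

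The main obstacle is practical rather than conceptual. It lies in producing the candidates $\M_i$ over $K$ in the first place, especially for $E_8^r$ in characteristic $2$, where some objects have large rank. I would generate them by starting from an obvious rank-one factorization, such as $(z, f/z)$-type or ideal-generated factorizations like $\Coker$ of the ideal $\exgen{x, z}$. I would then iterate: form the Auslander--Reiten triangle of a known $\M$, decompose its middle term using \cref{subsec:IndecompDecomps}, and simplify the resulting factorization with \cref{lem:SizeDown} and row/column operations. Along the $E_n$ graph this knitting reaches every vertex. The second concern is keeping the Gröbner computations feasible, so I would minimize the sizes of the factorizations before running the isomorphism and socle computations.
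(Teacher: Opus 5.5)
The step that fails is your opening claim that the theorem reduces to ``a finite, checkable list of computations, one for each pair $(n, r)$ and each relevant characteristic.'' For $r = 0$ the statement covers $E_6^0$, $E_7^0$, $E_8^0$ over fields of characteristic $0$ and of \emph{every} prime $p \geq 7$. Each tool of \cref{sec:MF}, however, is a computation over one fixed field $K$. Characteristic $0$ is a single run over $\mathbb{Q}$, but the primes $p \geq 7$ form an infinite family. Nothing in your plan controls how the Gr\"obner bases, Hom-dimensions, socle computations and unit-ideal tests behave under reduction modulo such primes.

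The paper handles these cases by a separate, non-computational observation. In characteristic $0$ or $p \geq 7$, an $E_n$ singularity is the invariant ring of $k\exdbra{u, v}$ under a binary polyhedral subgroup of $\mathrm{SL}_2(k)$ of order $24$, $48$ or $120$, which is invertible in $k$. So the classical Auslander theory applies, and only characteristics $2$, $3$, $5$ are left to the computer. You need this reduction, or else a genuine uniformity argument. For instance, you could run every computation over $\ZZ[1/N]$, check that all pivots, leading coefficients and determinants used are units there so the results specialize to every $p \nmid N$, and handle the finitely many remaining primes individually.

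Apart from this, your verification scheme is sound but organized differently from the paper's. The paper proves indecomposability only for the end objects $\M_1$ and $\M_{n-1}$. It uses their ranks and the fact that ranks of the indecomposables are the coefficients of the fundamental cycle, with one extra check $\M_1 \ncong \M_6 \oplus \M_6$ for $E_7$. It then knits: each further $\M_i$ is split out of an Auslander--Reiten middle term. Its indecomposability, distinctness and position are inherited from the known shape in \cref{cor:RDPQuiver}, so no locality or isomorphism tests are needed. What you call the practical obstacle is thus essentially the paper's entire proof. Your extra layer of direct locality and isomorphism checks costs more computation, but it avoids the Artin--Verdier rank input.

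Two small repairs are needed.
\begin{itemize}
\item \cref{lem:MorDecomp} presupposes that $\M$ is indecomposable, so it cannot serve in an indecomposability test. The valid principle is that $(X, Y) \mapsto (X_0, Y_0)$ is a well-defined algebra map on $\End_{\widehat{\D}}(\M)$ whose kernel lies in the radical, so locality can be read off from constant terms.
\item The isomorphism criterion is only stated for equal sizes. Factorizations without constant terms of different sizes $m \neq n$ are never isomorphic, since mutually inverse maps would give constant parts with $X_0' X_0 = I_m$ and $X_0 X_0' = I_n$.
\end{itemize}
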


\begin{proof}
    Since a rational double point of type $E_n$ in characteristic at least $7$
    is a quotient singularity by a finite subgroup of $\SL_2(k)$ just as in characteristic $0$,
    it suffices to consider the case that the characteristic of $k$ is less than $7$.

    \textit{Step 1.} We show that $\M_1$ and $\M_{n - 1}$ are indecomposable.
    Let $M_1$ and $M_{n - 1}$ be maximal Cohen--Macaulay $R$-modules
    induced from $\M_1$ and $\M_{n - 1}$ by \cref{lem:SizeDown} respectively. Then
    \begin{align*}
        &\rk_R M_{1} =
        \begin{cases}
            1 & \text{if $n = 6$,} \\
            2 & \text{if $n = 7, 8$,}
        \end{cases} \\
        &\rk_R M_{n - 1} =
        \begin{cases}
            1 & \text{if $n = 6, 7$,} \\
            2 & \text{if $n = 8$.}
        \end{cases}
    \end{align*}
    Considering that the ranks of indecomposable maximal Cohen--Macaulay $R$-modules corresponding to
    the vertices of the Auslander--Reiten quiver of $\sMCM(R)$ coincide with
    the coefficients appearing in the fundamental cycle of the rational double point $(X, p)$ (\cite{MR0769609}) and
    \begin{align*}
        &\M_1 \ncong \M_{n - 1} \oplus \M_{n - 1} & &\text{if $n = 7$,}
    \end{align*}
    we see that $\M_1$ and $\M_{n - 1}$ are indecomposable.

    \textit{Step 2.} We produce $\M_{n - 2}, \M_{n - 3}, \ldots, \M_{3}$ (resp. $\M_2$ and $\M_3$)
    recursively from $\M_{n - 1}$ (resp. $\M_{1}$).
    We first obtain the indecomposable matrix factorization $\M_{n - 2}$ from the Auslander--Reiten triangle
    \begin{align*}
        \begin{tikzcd}[ampersand replacement=\&]
            \M_{n - 1} \ar{r}{} \&
                \mathcal{L}_{n - 1} \ar{r}{} \&
                    \M_{n - 1} \ar{r} \&
                        \M_{n - 1}[1]
        \end{tikzcd}
    \end{align*}    
    by splitting off the trivial matrix factorizations $(1, f)$ and $(f, 1)$ from $\mathcal{L}_{n - 1}$.
    Next, the Auslander--Reiten triangle
    \begin{align*}
        \begin{tikzcd}[ampersand replacement=\&]
            \M_{n - 2} \ar{r}{} \&
                \mathcal{L}_{n - 2} \ar{r}{} \&
                    \M_{n - 2} \ar{r} \&
                        \M_{n - 2}[1]
        \end{tikzcd}
    \end{align*}
    provides the indecomposable matrix factorization $\M_{n - 3}$
    after the trivial components and $\M_{n - 1}$ are split off from $\mathcal{L}_{n - 2}$.
    Repeating similar procedures yields the indecomposable matrix factorizations
    $\M_{n - 2}, \M_{n - 3}, \ldots, \M_{3}$.

    \textit{Step 3.} We produce $\M_n$ from $\M_3$.
    Construct the Auslander--Reiten triangle
    \begin{align*}
        \begin{tikzcd}[ampersand replacement=\&]
            \M_{3} \ar{r}{} \&
                \mathcal{L}_{3} \ar{r} \&
                    \M_{3} \ar{r} \&
                        \M_{3}[1]
        \end{tikzcd}
    \end{align*}
    and split off the trivial components, $\M_{2}$ and  $\M_{4}$ from $\mathcal{L}_{3}$.
    Then we obtain the indecomposable matrix factorization $\M_{n}$.
\end{proof}

A similar argument suggests that analogous statements hold for the other types.
In what follows, we shall prove that this is indeed true.
Let $\pi \colon \widetilde{X} \to \Spec R$ be the minimal resolution.
We identify maximal Cohen--Macaulay $R$-modules with the associated coherent sheaves, and set
\begin{align*}
    &\widetilde{M} \coloneqq (\pi^*M)^{\vee \vee}
        & &\text{for $M \in \MCM(R)$.}
\end{align*}

\begin{lem}
    \label{lem:IdealSh}
    Let $M \subset R^r$ be a maximal Cohen--Macaulay $R$-module with $\rk_R M = r$. Then
    $\det (\widetilde{M})$ is a locally free ideal sheaf of $\strsh_{\widetilde{X}}$.
\end{lem}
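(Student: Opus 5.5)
The idea is to produce a nonzero map $\det(\widetilde{M}) \to \strsh_{\widetilde{X}}$ by taking the $r$-th exterior power of the inclusion $M \subset R^r$, pulled back to $\widetilde{X}$. Two facts are needed first. The sheaf $\widetilde{M} = (\pi^*M)^{\vee\vee}$ is reflexive on the smooth surface $\widetilde{X}$, hence locally free. Its rank is $r$, since $\pi$ is an isomorphism over the generic point and $\rk_R M = r$. Therefore $\det(\widetilde{M}) = \bigwedge^r \widetilde{M}$ is an invertible sheaf, and it remains to embed it into $\strsh_{\widetilde{X}}$.

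Pull back the inclusion $\iota \colon M \hookrightarrow R^r$ to get $\pi^*\iota \colon \pi^*M \to \strsh_{\widetilde{X}}^{\oplus r}$. Taking double duals, which is functorial, gives $\widetilde{\iota} \colon \widetilde{M} \to (\strsh_{\widetilde{X}}^{\oplus r})^{\vee\vee} = \strsh_{\widetilde{X}}^{\oplus r}$. Applying $\bigwedge^r$ then gives
\[
\det(\widetilde{\iota}) \colon \det(\widetilde{M}) \to \det(\strsh_{\widetilde{X}}^{\oplus r}) = \strsh_{\widetilde{X}}.
\]
The image of this map is an ideal sheaf. If the map is injective, it is isomorphic to $\det(\widetilde{M})$, hence a locally free ideal sheaf, which is what the statement asserts.

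For injectivity I use the fact that $\det(\widetilde{M})$ is a line bundle on the integral scheme $\widetilde{X}$, so its sections are torsion-free. It is therefore enough to check that $\det(\widetilde{\iota})$ is nonzero at the generic point. Over the open set $U = \widetilde{X} \setminus \pi^{-1}(p) \cong \Spec R \setminus \{\mathfrak{m}\}$, the sheaf $\pi^*M$ restricts to $M|_U$. Since $R$ is normal, $\pi^*M$ is locally free on $U$ (as $U$ is regular and $M$ is MCM there), so the double dual does not change it on $U$. Hence $\widetilde{\iota}|_U = \iota|_U$.

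At the generic point $\eta$, $\iota_\eta \colon M \otimes_R Q \to Q^r$ is an injective map between $Q$-vector spaces of the same dimension $r$, where $Q = \operatorname{Frac} R$. It is injective because localization is exact, so it is an isomorphism. Thus $\det(\iota_\eta)$ is a nonzero element of $Q$. Consequently $\det(\widetilde{\iota})$ is injective, and its image $\det(\widetilde{\iota})(\det(\widetilde{M})) \cong \det(\widetilde{M})$ is a locally free ideal sheaf of $\strsh_{\widetilde{X}}$. This proves the lemma, and presumably it is the form the paper uses later, namely that the ideal is $\strsh_{\widetilde{X}}(-D)$ for an effective divisor $D$.

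The main point needing care is the identification of $\widetilde{\iota}$ with $\iota$ on $U$. This rests on two facts: $\pi|_U$ is an isomorphism, and the double dual of a locally free sheaf is itself. Together these make the generic-point computation legitimate. The rest is formal.
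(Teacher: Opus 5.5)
Your proof is correct and is essentially the paper's argument: both take $\det$ of the double-dualized pullback $\widetilde{M} \to \strsh_{\widetilde{X}}^{\oplus r}$ of $M \hookrightarrow R^r$, check that it is an isomorphism at the generic point because $\rk_R M = r$, and get injectivity from the torsion-freeness of $\det(\widetilde{M})$ on the integral scheme $\widetilde{X}$. The only difference is that the paper first shows $\widetilde{M} \to \strsh_{\widetilde{X}}^{\oplus r}$ is itself injective, using the torsion kernel of $\pi^*M \to \strsh_{\widetilde{X}}^{\oplus r}$; your generic-point argument shows this step is not needed.
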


\begin{proof}
    As $\T \coloneqq \Ker (\pi^* (M \hookrightarrow R^r))$ is a torsion sheaf,
    applying the functor $(-)^{\vee \vee}$ to the exact sequence
    \begin{align*}
        \begin{tikzcd}[ampersand replacement=\&]
            0 \ar{r} \&
                \T \ar{r} \&
                    \pi^* M \ar{r} \&
                        \strsh_{\widetilde{X}}^{\oplus r}
        \end{tikzcd}
    \end{align*}
    tells us the canonical morphism $\widetilde{M} \to \strsh_{\widetilde{X}}^{\oplus r}$ is injective.
    Since, by $\rk_R M = r$, this morphism is isomorphic at the generic point $\eta$ of $\widetilde{X}$,
    so is the morphism $\det(\widetilde{M}) \to \strsh_{\widetilde{X}}$. Therefore,
    \begin{equation}
        \Ker (\det(\widetilde{M}) \to \strsh_{\widetilde{X}})_{\eta} = 0. \label{eq:AtGenPt}
    \end{equation} 
    Considering that $\widetilde{X}$ is integral and $\det(\widetilde{M})$ is torsion free,
    \cref{eq:AtGenPt} implies
    \[  \Ker (\det(\widetilde{M}) \to \strsh_{\widetilde{X}}) = 0. \qedhere \]
\end{proof}

The following lemma is straightforward.

\begin{lem}
    \label{lem:MCMRank}
    \begin{items}
        \item \label{lem:MCMRank:1} Let $\M_1, \M_2, \ldots, \M_n$ be matrix factorizations of type $A_n$ in \cref{subsec:A_n},
        and $M_1, M_2, \allowbreak \ldots, M_n$ maximal Cohen--Macaulay $R$-modules
        induced from them by \cref{lem:SizeDown} \cref*{lem:SizeDown:1}. Then
        \begin{align*}
            &\rk_R M_i = 1
                & &\text{for $i = 1, 2, \ldots, n$.}
        \end{align*}
        \item \label{lem:MCMRank:2} Let $\M_1, \M_2, \ldots, \M_{2n}$ be matrix factorizations of type $D_{2n}^r$
        in \cref{subsec:D_even^r}, and $M_1, M_2, \ldots, M_{2n}$ maximal Cohen--Macaulay $R$-modules
        induced from them by \cref{lem:SizeDown} \cref*{lem:SizeDown:2}. Then
        \[
            \rk_R M_i =
            \begin{cases}
                1 & \text{for $i = 1, 2n - 1, 2n$,} \\
                2 & \text{for $i = 2, 3, \ldots, 2n - 2$.}
            \end{cases}
        \]
        \item \label{lem:MCMRank:3} Let $\M_1, \M_2, \ldots, \M_{2n + 1}$ be matrix factorizations of type $D_{2n + 1}^r$
        in \cref{subsec:D_odd^r}, and $M_1, M_2, \ldots, M_{2n + 1}$ maximal Cohen--Macaulay $R$-modules
        induced from them by \cref{lem:SizeDown} \cref*{lem:SizeDown:2}. Then
        \[
            \rk_R M_i =
            \begin{cases}
                1 & \text{for $i = 1, 2n, 2n + 1$,} \\
                2 & \text{for $i = 2, 3, \ldots, 2n - 1$.}
            \end{cases}
        \]
    \end{items}
\end{lem}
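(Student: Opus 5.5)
The plan is to reduce the rank computation to a computation of the rank of a matrix over the fraction field. We use the description from \cref{lem:SizeDown}: $M_i$ is identified with the $R$-submodule of $R^m$ generated by certain row vectors of the second factor $B$ (for type $A_n$), or by the last block of rows of $B$ (for types $D_n^r$). Here $m$ is half the size of $\M_i$.

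Since $R$ is a domain, $\rk_R M_i$ is the rank over $Q(R)$ of the matrix whose columns are these generators. Equivalently, one can use the proof of \cref{lem:SizeDown}: $\Coker(A,B)\cong \Im B$ over $R$. Over the field $Q(R)$ the composite $A\circ B=f=0$, while $(A,B)$ has no free summand after localization. Hence the standard identity
\[
\rk_R \Coker A \;=\; \rk_{Q(R)} \Ker A \;=\; 2m - \rk_{Q(R)} \overline{A}
\]
reduces everything to computing $\rk_{Q(R)}\overline{A}$ and $\rk_{Q(R)}\overline{B}$, whose sum is $2m$. A cleaner route is the determinant formula of Eisenbud: $\det A = u f^{a}$ and $\det B = u^{-1} f^{b}$ with $a+b=2m$, $u$ a unit, and $\rk_R\Coker A = a$. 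So for each explicit $\M_i$ in \cref{subsec:A_n}, \cref{subsec:D_even^r} and \cref{subsec:D_odd^r}, I would compute $\det A$ up to a unit and read off the exponent of $f$.

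Concretely, for type $A_n$ the factorizations are of size $2$, of the form $A=\begin{pmatrix} z^{i} & x\\ -y & z^{n+1-i}\end{pmatrix}$ up to signs. Here $\det A = f$ directly, so the rank is $1$.

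For types $D_{2n}^r$ and $D_{2n+1}^r$, the families are presumably given by block matrices depending on the index $i$ in a uniform way, with sizes $2$ for the end vertices and $4$ for the middle ones. I would expand $\det A$ by exploiting the block shape that makes \cref{lem:SizeDown}~\cref*{lem:SizeDown:2} applicable, namely $A=\begin{pmatrix} gI_m & P\\ Q & T\end{pmatrix}$. The Schur complement gives
\[
\det A = \det(gT - QP)/g^{m}\cdot g^{m}/g^{m}\cdots,
\]
that is, $g^m\det(A)=g^m\det(gT-QP)\cdot g^{-m}$ suitably arranged. Computing $\det(gT-QP)$ as a $2\times 2$ or $1\times1$ determinant should produce $g^{m}f^{a}$, up to a unit, with $a=1$ or $2$.

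The main obstacle is doing this uniformly in $n$ and $r$, since the entries involve powers like $y^{n-r}$ and $y^{i}$. I would first verify the middle-vertex case, where the answer should be $\det A=f^2$ up to a unit. Then I would treat the three exceptional vertices ($1$ and the two branch ends) separately, as size-$2$ factorizations with $\det A = f$. As a cross-check, the resulting ranks $1,2,\dots,2,1,1$ match the coefficients of the fundamental cycle of $D_n$, consistent with \cite{MR0769609}.
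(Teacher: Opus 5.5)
Your determinant route works. By \cref{lem:SizeDown} you have $M_i\cong\Coker A$, and $f$ is prime in $S$ because $R$ is a normal domain. Hence $\rk_R\Coker A$ is the exponent $a$ in $\det A=uf^a$. For $A_n$ and for every size-$2$ factorization of type $D$ one has $\det A=f$ exactly.

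The block-determinant step, however, is wrong as written. Since $gI_m$ is central, $\det\begin{pmatrix} gI_m & P\\ Q & T\end{pmatrix}=\det(gT-QP)$ exactly; for $m=1$ this is just $gt-qp$. Your formula $\det A=g^{-m}\det(gT-QP)$ and your prediction $\det(gT-QP)=g^mf^a$ are each off by a factor $g^{\pm m}$, and they only cancel by accident.

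For the listed size-$4$ factorizations, the actual computation runs as follows:
\begin{itemize}
\item One has $g=z$ and $T=(z+\varepsilon xy^{n-r})I_2$ (resp.\ $(z+y^n+\varepsilon xy^{n-r})I_2$).
\item A direct multiplication gives $QP=-(x^2y+xy^n)I_2$ (resp.\ $-x^2yI_2$) for both $\M_{2i}$ and $\M_{2i+1}$.
\item Hence $gT-QP=fI_2$ and $\det A=f^2$.
\end{itemize}
This identity is the whole content of the $D$ case, so it should be verified rather than anticipated. The remark about ``no free summand after localization'' plays no role: $\rk_R\Coker A=2m-\rk_{Q(R)}\overline{A}$ is just rank--nullity.

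The paper gives no argument beyond calling the lemma straightforward. The evident check is the one you mention at the start and then drop. By \cref{lem:SizeDown}, $M_i\subseteq R^m$ is generated by the columns of one $m\times 2m$ block row of $B$. In every listed factorization that block row contains either $z^i$ (type $A_n$, where $M_i=(z^i,y)R$) or $zI_m$ (types $D$, where the bottom block row of $B$ is $(-Q\mid zI_m)$). Therefore $zR^m\subseteq M_i\subseteq R^m$ (with $z^i$ in place of $z$ for $A_n$), and $\rk_RM_i=m$ follows at once.

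That route needs no determinant and no rank--determinant theorem. It also directly produces the embedding $M_i\subset R^{r_i}$ with $r_i=\rk_RM_i$, which is exactly what \cref{lem:IdealSh} requires next. Your route does not depend on the particular embedding and applies to any matrix factorization of a prime $f$. The price is invoking the standard theorem and carrying out the $QP$ computation correctly.
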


In each case \cref*{lem:MCMRank:1}, \cref*{lem:MCMRank:2} and \cref*{lem:MCMRank:3} of \cref{lem:MCMRank},
since any $M_i$ is embedded in $R^{r_i}$ where $r_i \coloneqq \rk_R (M_i)$,
\cref{lem:IdealSh} asserts that there exists a closed subscheme $Z_i$ of $\widetilde{X}$ such that
$\det (\widetilde{M_i}) \cong \strsh_{\widetilde{X}}(-Z_i)$. 
With this notation, $\exset{Z_i}_i$ is given by the following proposition.

\begin{prop}
    \label{prop:HardWork}
    \begin{items}
        \item \label{prop:HardWork:1} Assume the rational double point $(X, p)$ is of type $A_n$.
        Then in the divisor class group $\Cl (\widetilde{X})$,
        \begin{align*}
            \begin{pmatrix}
                Z_1 & Z_2 & \cdots & Z_n
            \end{pmatrix}
            = L_n
            \begin{pmatrix}
                1 & 2 & \cdots & n
            \end{pmatrix}
            + \begin{pmatrix}
                E_1 & E_2 & \cdots & E_n
            \end{pmatrix}
            \begin{pmatrix}
                1 & 1 & 1 & \cdots & 1 \\
                1 & 2 & 2 & \cdots & 2 \\
                1 & 2 & 3 & \cdots & 3 \\
                \vdots & \vdots & \vdots & \ddots & \vdots \\
                1 & 2 & 3 & \cdots & n
            \end{pmatrix},
        \end{align*}
        where $L_n$ is the strict transform of the vanishing locus $\VV[R](y, z)$ and
        the exceptional prime divisors are illustrated in the dual graph
        \[ 
            \begin{tikzcd}[ampersand replacement=\&]
                E_1 \ar[no head]{r} \&
                    E_2 \ar[no head]{r} \&
                        \cdots \ar[no head]{r} \&
                            E_n \ar[no head]{r} \&
                                L_{n}\rlap{.}
            \end{tikzcd}
        \]
        \item \label{prop:HardWork:2} Assume the rational double point $(X, p)$ is of type $D_{2n}^r$. Define
        \begin{align*}
            &U = \begin{pmatrix}
                \bm{u}_1 & \bm{u}_2 & \cdots & \bm{u}_{2n - 3}
            \end{pmatrix}
            \in M_{3, 2n - 3} (\ZZ), \\
            &V = \begin{pmatrix}
                \bm{v}_1 & \bm{v}_2 & \cdots & \bm{v}_{2n - 3}
            \end{pmatrix}
            \in M_{2n - 2, 2n - 3} (\ZZ), \\
            &W = \begin{pmatrix}
                \bm{w}_1 & \bm{w}_2 & \cdots & \bm{w}_{2n - 3}
            \end{pmatrix}
            \in M_{2, 2n - 3} (\ZZ)
        \end{align*}
        by setting for $j = 1, 2, \ldots, 2n - 3$,
        \begin{align*}
            &\bm{u}_j \coloneqq
            \begin{cases}
                \trans{\begin{pmatrix}
                    1 & 1 & 1
                \end{pmatrix}}
                    & \text{if $j$ is odd,} \\
                \trans{\begin{pmatrix}
                    0 & 1 & 1
                \end{pmatrix}}
                    & \text{if $j$ is even,}
            \end{cases} \\
            &\bm{v}_j \coloneqq
            \begin{cases}
                \trans{\begin{pNiceArray}{cccc:cccc}[margin]
                    2 & 4 & \cdots & 2j + 2 & 2j + 3 & 2j + 4 & \cdots & 2n + j - 1
                \end{pNiceArray}}
                    & \text{if $j$ is even,} \\
                \trans{\begin{pNiceArray}{cccc}[margin]
                    3 & 5 & \cdots & 4n - 3
                \end{pNiceArray}}
                    & \text{if $j = 2n - 3$,} \\
                \trans{\begin{pNiceArray}{cccc:cccc}[margin]
                    3 & 5 & \cdots & 2j + 3 & 2j + 4 & 2j + 5 & \cdots & 2n + j
                \end{pNiceArray}}
                    & \text{otherwise,}
            \end{cases} \\
            &\bm{w}_j \coloneqq
            \trans{\begin{pmatrix}
                n + \ceil{\dfrac{j}{2}} & n + \ceil{\dfrac{j}{2}}
            \end{pmatrix}}. \\
        \end{align*}
        Then in the divisor class group $\Cl (\widetilde{X})$,
        \begin{align*}
            \begin{pmatrix}
                Z_1 & Z_2 & \cdots & Z_{2n}
            \end{pmatrix}
            =
            &\begin{pmatrix}
                L_1 & L_{2n - 1} & L_{2n}
            \end{pmatrix}
            \begin{pNiceArray}{c:ccc:cc}[margin]
                1 & \Block{3-3}{U} & & & 0 & 1 \\
                0 & & & & 1 & 1 \\
                0 & & & & 0 & 0
            \end{pNiceArray} \\
            &+ \begin{pmatrix}
                E_1 & E_2 & \cdots & E_{2n}
            \end{pmatrix}
            \begin{pNiceArray}{c:cccc:cc}[margin]
                2 & \Block{4-4}{V} & & & & 1 & 2 \\
                2 & & & & & 2 & 3 \\
                \vdots & & & & & \vdots & \vdots \\
                2 & & & & & 2n - 2 & 2n - 1 \\
                \hdottedline
                1 & \Block{2-4}{W} & & & & n & n \\
                1 & & & & & n - 1 & n
            \end{pNiceArray}, \\
        \end{align*}
        where $L_1$, $L_{2n - 1}$ and $L_{2n}$
        are the strict transforms of the vanishing loci
        $\VV[R](y, z)$, $\VV[R](x, z)$ and $\VV[R](x + y^{n - 1}, z)$ respectively,
        and the exceptional prime divisors are illustrated in the dual graph
        \[ 
            \begin{tikzcd}[ampersand replacement=\&, row sep=tiny]
                \&
                    \&
                        \&
                            \&
                                \&
                                    E_{2n - 1} \ar[no head]{r} \&
                                        L_{2n - 1}\\
                L_1 \ar[no head]{r} \&
                    E_1 \ar[no head]{r} \&
                        E_2 \ar[no head]{r} \&
                            \cdots \ar[no head]{r} \&
                                E_{2n - 2} \ar[no head]{ur} \ar[no head]{dr} \&
                                    \&
                                        \\
                \&
                    \&
                        \&
                            \&
                                \&
                                    E_{2n} \ar[no head]{r} \&
                                        L_{2n}\rlap{.}
            \end{tikzcd}
        \]
        \item \label{prop:HardWork:3} Assume the rational double point $(X, p)$ is of type $D_{2n + 1}^r$. Define
        \begin{align*}
            &U = \begin{pmatrix}
                \bm{u}_1 & \bm{u}_2 & \cdots & \bm{u}_{2n - 2}
            \end{pmatrix}
            \in M_{2, 2n - 2} (\ZZ), \\
            &V = \begin{pmatrix}
                \bm{v}_1 & \bm{v}_2 & \cdots & \bm{v}_{2n - 2}
            \end{pmatrix}
            \in M_{2n - 1, 2n - 2} (\ZZ), \\
            &W = \begin{pmatrix}
                \bm{w}_1 & \bm{w}_2 & \cdots & \bm{w}_{2n - 2}
            \end{pmatrix}
            \in M_{2, 2n - 2} (\ZZ)
        \end{align*}
        by setting for $j = 1, 2, \ldots, 2n - 2$,
        \begin{align*}
            &\bm{u}_j \coloneqq
            \begin{cases}
                \trans{\begin{pmatrix}
                    1 & 2
                \end{pmatrix}}
                    & \text{if $j$ is odd,} \\
                \trans{\begin{pmatrix}
                    0 & 2
                \end{pmatrix}}
                    & \text{if $j$ is even,}
            \end{cases} \\
            &\bm{v}_j \coloneqq
            \begin{cases}
                \trans{\begin{pNiceArray}{cccc:cccc}[margin]
                    3 & 5 & \cdots & 2j + 3 & 2j + 4 & 2j + 5 & \cdots & 2n + j + 1
                \end{pNiceArray}}
                    & \text{if $j$ is odd,} \\
                \trans{\begin{pNiceArray}{cccc}[margin]
                    2 & 4 & \cdots & 4n - 2
                \end{pNiceArray}}
                    & \text{if $j = 2n - 2$,} \\
                \trans{\begin{pNiceArray}{cccc:cccc}[margin]
                    2 & 4 & \cdots & 2j + 2 & 2j + 3 & 2j + 4 & \cdots & 2n + j
                \end{pNiceArray}}
                    & \text{otherwise,}
            \end{cases} \\
            &\bm{w}_j \coloneqq
            \trans{\begin{pmatrix}
                n + \ceil{\dfrac{j}{2}} & n + \ceil{\dfrac{j}{2}} + 1
            \end{pmatrix}}. \\
        \end{align*}
        Then in the divisor class group $\Cl (\widetilde{X})$,
        \begin{align*}
            \begin{pmatrix}
                Z_1 & Z_2 & \cdots & Z_{2n + 1}
            \end{pmatrix}
            =
            &\begin{pmatrix}
                L_1 & L_{2n + 1}
            \end{pmatrix}
            \begin{pNiceArray}{c:cc:cc}[margin]
                1 & \Block{2-2}{U} & & 0 & 1 \\
                0 & & & 1 & 1
            \end{pNiceArray} \\
            &+ \begin{pmatrix}
                E_1 & E_2 & \cdots & E_{2n + 1}
            \end{pmatrix}
            \begin{pNiceArray}{c:cccc:cc}[margin]
                2 & \Block{4-4}{V} & & & & 1 & 2 \\
                2 & & & & & 2 & 3 \\
                \vdots & & & & & \vdots & \vdots \\
                2 & & & & & 2n - 1 & 2n \\
                \hdottedline
                1 & \Block{2-4}{W} & & & & n & n \\
                1 & & & & & n & n + 1
            \end{pNiceArray}, \\
        \end{align*}
        where $L_1$ and $L_{2n + 1}$
        are the strict transforms of the vanishing loci $\VV[R](y, z)$ and $\VV[R](x, z)$ respectively,
        and the exceptional prime divisors are illustrated in the dual graph
        \[ 
            \begin{tikzcd}[ampersand replacement=\&, row sep=tiny]
                \&
                    \&
                        \&
                            \&
                                \&
                                    E_{2n} \&
                                        \\
                L_1 \ar[no head]{r} \&
                    E_1 \ar[no head]{r} \&
                        E_2 \ar[no head]{r} \&
                            \cdots \ar[no head]{r} \&
                                E_{2n - 1} \ar[no head]{ur} \ar[no head]{dr} \&
                                    \&
                                        \\
                \&
                    \&
                        \&
                            \&
                                \&
                                    E_{2n + 1} \ar[no head]{r} \&
                                        L_{2n + 1}\rlap{.}
            \end{tikzcd}
        \]
    \end{items}
\end{prop}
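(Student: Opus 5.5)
Since every $M_i$ sits inside $R^{r_i}$ with $r_i = \rk_R M_i$, and $\widetilde{M_i} \hookrightarrow \strsh_{\widetilde{X}}^{\oplus r_i}$ by the proof of \cref{lem:IdealSh}, I would first reduce the computation of $Z_i$ to a local one. The ideal sheaf $\det(\widetilde{M_i})$ is the image of $\bigwedge^{r_i}\widetilde{M_i} \to \strsh_{\widetilde{X}}$. Away from torsion, this agrees with the ideal generated by the pullbacks of the $r_i\times r_i$ minors of the generating columns $\bm{b}_1,\ldots,\bm{b}_{2m}$ of \cref{lem:SizeDown}. So I would set $I_i \subset R$ to be the ideal of $r_i$-minors of the matrix $(\bm{b}_1\ \cdots\ \bm{b}_{2m})$ and then show that $\strsh_{\widetilde{X}}(-Z_i) = (I_i\strsh_{\widetilde{X}})^{\vee\vee}$. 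This holds because double dualizing on the smooth surface $\widetilde{X}$ discards only the codimension-two part. Hence $Z_i$ is the divisorial part of the total transform of $\VV[R](I_i)$: it equals the strict transform of the divisorial part of $\VV[R](I_i)$, together with the multiplicities $\operatorname{ord}_{E_j}(I_i)$ along each exceptional curve.

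For the strict transform part, I would read off $I_i$ from the explicit factorizations in \cref{subsec:A_n}, \cref{subsec:D_even^r} and \cref{subsec:D_odd^r}. In type $A_n$, for instance, $M_i$ should be the ideal $(y, z^i)$ or $(x, z^i)$, whose height-one part is $\VV[R](y,z)$ with multiplicity $i$. In the $D$ cases the curves $\VV[R](y,z)$, $\VV[R](x,z)$ and $\VV[R](x+y^{n-1},z)$ appear with the multiplicities recorded in the $U$-blocks. Since I only need equality in $\Cl(\widetilde{X})$, any leftover principal divisors can be discarded.

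For the exceptional part, I would not compute $\operatorname{ord}_{E_j}$ directly through the explicit blow-up sequence. Instead I would use \cref{prop:AV} in the form $c_1(\widetilde{M_i})\cdot E_j = \delta_{i,j}$, combined with $c_1(\widetilde{M_i}) = -Z_i$ in $\Cl(\widetilde{X})$. Writing $Z_i = L\text{-part} + \sum_j a_{ij}E_j$, the intersection numbers with the $E_j$ form a linear system whose matrix is the negative definite intersection matrix $(E_j\cdot E_l)$. The right-hand side is determined by $\delta_{ij}$ and by $L\cdot E_j$, which is $1$ exactly when $L$ meets $E_j$ as in the displayed dual graphs. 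Since the intersection matrix is invertible, the coefficients $a_{ij}$ are uniquely determined. It then only remains to verify that the stated matrices $V$ and $W$ (or the $A_n$ matrix) solve this system, which is a direct check with the Cartan matrix. The labelling ambiguity in \cref{prop:AV} is resolved by choosing the labelling of the $E_j$ adapted to the $M_i$.

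The main obstacle is the strict transform step. One must determine precisely where each $L$ meets the exceptional configuration, namely which end of the chain or which fork component. The answer depends on the characteristic-$2$ parameter $r$ in type $D^r$, so this requires tracking the first few blow-ups of $z^2+x^2y+xy^n+xy^{n-r}z$ near the curves $x=z=0$ and $x+y^{n-1}=z=0$. I would do this by blowing up along $y$, showing that each curve separates only at the last step and so meets a terminal curve transversally, and checking that $r$ only affects higher-order terms.
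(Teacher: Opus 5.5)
\textbf{The gap: using \cref{prop:AV} for these $M_i$ is circular.} Your first step is exactly the paper's argument: the multiplicity of each prime divisor in $Z_i$ is the valuation along it of the ideal $I_i$ of $r_i\times r_i$ minors. The problem is the exceptional part, where you invoke \cref{prop:AV} for the modules $M_i$.

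\cref{prop:AV} concerns a complete set of pairwise non-isomorphic indecomposable maximal Cohen--Macaulay modules, matched with the $E_j$ by an unspecified relabelling. The $M_i$ of \cref{prop:HardWork} are only the modules obtained from the listed matrix factorizations via \cref{lem:SizeDown}. At this point nothing tells you that they are indecomposable or pairwise non-isomorphic. Nor does anything tell you that $M_i$ corresponds to the curve $E_i$ of the displayed dual graph. That labelling is fixed geometrically by where the $L$'s attach, so you cannot relabel to suit yourself.

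In the paper the implication runs the other way. The proposition is proved by computing the valuations directly, and only then is $c_1(\widetilde{M_i})\cdot E_j=-Z_i\cdot E_j=\delta_{i,j}$ \emph{checked} from it. That check is what shows the $M_i$ form a complete set of indecomposables, leading to \cref{thm:Main2}. Moreover, since $R$ is a complete rational singularity, a class in $\Cl(\widetilde{X})$ is determined by its intersection numbers with the $E_j$. So the identities in $\Cl(\widetilde{X})$ carry essentially the information $-Z_i\cdot E_j=\delta_{i,j}$, and you would be assuming the whole content of the proposition.

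\textbf{How to repair it.} You need an independent computation of $\operatorname{ord}_{E_j}(I_i)$. Your intersection-theoretic idea becomes legitimate if you apply it to principal divisors rather than to $Z_i$. For each generator $g$ of $I_i$ one has $\operatorname{div}(\pi^*g)\cdot E_j=0$ for all $j$. Hence the exceptional coefficients of $\operatorname{div}(\pi^*g)$ are determined by the intersection numbers of the strict transform of the zero locus of $g$ with the $E_j$. Then $\operatorname{ord}_{E_j}(I_i)=\min_g\operatorname{ord}_{E_j}(g)$, since $\operatorname{ord}_{E_j}$ is a valuation.

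For $A_n$, where $M_i\cong(y,z^i)$, the strict transform of $\operatorname{div}(y)$ is $(n+1)L_n$. The strict transform of $\operatorname{div}(z)$ is $L_n$ plus the strict transform of $V(x,z)$, which meets $E_1$. This gives $\operatorname{ord}_{E_j}(y)=j$ and $\operatorname{ord}_{E_j}(z)=1$, hence the stated coefficient $\min(i,j)$.

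In the $D$ cases you must likewise locate the strict transforms of the zero loci of all the minors generating $I_i$, not only of the curves $L$. This is the real content of the paper's ``direct calculation''. Only afterwards does $-Z_i\cdot E_j=\delta_{i,j}$ follow, as something verified rather than assumed.
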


\begin{proof}
    Take a maximal Cohen--Macaulay $R$-module $M \coloneqq M_i$
    and a prime divisor $C = \overline{\exset{\eta}}$ on $\widetilde{X}$.
    Let the morphism $(\pi^* (M \hookrightarrow R^{r_i}))_{\eta}$ be denoted by
    $\phi \colon M \tens[R] \strsh_{\widetilde{X}, \eta} \to \strsh_{\widetilde{X}, \eta}^{\oplus r_i}$,
    which is isomorphic at the generic point of $\widetilde{X}$.
    Note that the function field of $\widetilde{X}$ is identified with 
    the fraction field $K(R)$ of $R$. Then
    \begin{align*}
        \widetilde{M}_{\eta}
        &\cong ((\pi^* M)_{\eta})^{\vee \vee} \\
        &\cong \dfrac{M \tens[R] \strsh_{\widetilde{X}, \eta}}
        {\Tor_{\strsh_{\widetilde{X}, \eta}} (M \tens[R] \strsh_{\widetilde{X}, \eta})} \\
        &= \dfrac{M \tens[R] \strsh_{\widetilde{X}, \eta}}
        {\Ker \expar{M \tens[R] \strsh_{\widetilde{X}, \eta} \to M \tens[R] K(R)}} \\
        &= \dfrac{M \tens[R] \strsh_{\widetilde{X}, \eta}}{\Ker \phi} \\
        &\cong \Im \phi.
    \end{align*}
    Therefore,
    \[
        \det (\widetilde{M})_{\eta}
        \cong \det (\widetilde{M}_{\eta})
        \cong \det (\Im \phi)
        \cong \Im (\det M \to R) \strsh_{\widetilde{X}, \eta}.
    \]
    This implies that the multiplicity of $C$ in $Z_i$ coincides with the valuation of the ideal
    $\Im (\det M \to R) \strsh_{\widetilde{X}, \eta}$.
    In particular, it follows that any prime divisor appearing in the cycle $Z_i$ lies in
    the inverse image of the vanishing locus $\VV[R](\Im(\det M \to R))$ under $\pi \colon \widetilde{X} \to \Spec R$.
    By calculating the multiplicities of such prime divisors directly, we obtain the desired results.
\end{proof}

In each case \cref*{prop:HardWork:1}, \cref*{prop:HardWork:2} and \cref*{prop:HardWork:3} of \cref{prop:HardWork},
we can verify
\begin{align*}
    &c_1(\widetilde{M_i}) \cdot E_j = -Z_i \cdot E_j = \delta_{i, j}
        & &\text{for any $M_i$ and $E_j$,}
\end{align*}
and hence $\exset{M_i}_i$ is a complete set of pairwise non-isomorphic indecomposable objects in $\sMCM(R)$.
Combining this with \cref{thm:IrrConfig} yields our second main theorem.

\begin{thm}
    \label{thm:Main2}
    \begin{items}
        \item The matrix factorizations $\M_1, \M_2, \ldots, \M_n$ of type $A_n$ in \cref{subsec:A_n}
        constitute a complete set of pairwise non-isomorphic indecomposable objects in $\HMF_S (f)$ of type $A_n$,
        and the Auslander--Reiten quiver is given by
        \[  \]
        \item The matrix factorizations $\M_1, \M_2, \ldots, \M_{2n}$ of type $D_{2n}^r$ in \cref{subsec:D_even^r}
        constitute a complete set of pairwise non-isomorphic indecomposable objects in $\HMF_S (f)$ of type $D_{2n}^r$,
        and the Auslander--Reiten quiver is given by
        \[  \]
        \item The matrix factorizations $\M_1, \M_2, \ldots, \M_{2n + 1}$ of type $D_{2n + 1}^r$ in \cref{subsec:D_odd^r}
        constitute a complete set of pairwise non-isomorphic indecomposable objects in $\HMF_S (f)$ of type $D_{2n + 1}^r$,
        and the Auslander--Reiten quiver is given by
        \[  \]
    \end{items}
\end{thm}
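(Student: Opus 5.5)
The plan is to pass from matrix factorizations to maximal Cohen--Macaulay modules and then use the numerical characterization of \cref{prop:AV}. By \cref{thm:HMFToMCM}, $\Coker$ identifies $\HMF_S(f)$ with $\sMCM(R)$. Via \cref{lem:SizeDown}, each $\M_i$ corresponds to the submodule $M_i \subset R^{r_i}$ spanned by the listed columns (resp.\ rows) of $B$, where $r_i = \rk_R M_i$ is given by \cref{lem:MCMRank}. By \cref{lem:IdealSh}, $\det(\widetilde{M_i}) \cong \strsh_{\widetilde{X}}(-Z_i)$, and \cref{prop:HardWork} expresses $Z_i$ explicitly in $\Cl(\widetilde{X})$ in terms of the strict transforms $L_\bullet$ and the exceptional curves $E_j$.

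The first step is to compute $c_1(\widetilde{M_i}) \cdot E_j = -Z_i \cdot E_j$ from \cref{prop:HardWork}. This uses only $E_j^2=-2$, $E_j\cdot E_l = 1$ for adjacent vertices of the dual graph (and $0$ otherwise), and $L\cdot E_j = 1$ exactly when $L$ meets $E_j$ in the pictured graph. I would carry this out column by column.
\begin{itemize}
    \item For type $A_n$, the column of $Z_i$ has $E$-coefficients $\min(i,l)$. Pairing with the Cartan matrix gives $-\delta_{i,j}$, up to the boundary correction at $E_n$, which is cancelled exactly by the term $iL_n$ since $L_n\cdot E_n=1$.
    \item For types $D^r_{2n}$ and $D^r_{2n+1}$, I would do the same case check. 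There are separate cases for the first column, the last two columns, and the odd and even middle columns $\bm{u}_j,\bm{v}_j,\bm{w}_j$. The piecewise-linear entries are designed so that the second difference along the chain vanishes except at position $j$, with the contribution of the branch curves $E_{2n-1},E_{2n}$ (resp.\ $E_{2n},E_{2n+1}$) and of the $L$'s balancing the node.
\end{itemize}
The result is $c_1(\widetilde{M_i})\cdot E_j=\delta_{i,j}$ for all $i,j$.

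The second step deduces indecomposability and completeness. Write $\widetilde{M_i}$ in terms of the full set of indecomposables $N_1,\dots,N_n$ (together with $R$) normalized as in \cref{prop:AV}. Since $c_1$ is additive on direct sums and $\widetilde{R}=\strsh_{\widetilde{X}}$ has $c_1=0$, any decomposition $M_i \cong R^{a}\oplus\bigoplus_l N_l^{b_l}$ gives $b_l = c_1(\widetilde{M_i})\cdot E_l=\delta_{i,l}$. Hence $M_i \cong N_i \oplus R^{a}$, so in $\sMCM(R)$ we get $\M_i \cong N_i$. The vectors $(c_1\cdot E_j)_j$ are pairwise distinct, so the $\M_i$ are pairwise non-isomorphic, indecomposable, and exhaust the non-free indecomposables. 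This gives the first claim in each of the three cases.

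The third step is the Auslander--Reiten quiver. By \cref{thm:IrrConfig}, the number of arrows $\M_i\to\M_j$ equals $2\delta_{i,j}+E_i\cdot E_j$. This is $0$ on the diagonal and $1$ exactly for adjacent $E_i,E_j$ in the stated dual graphs, with the labelling fixed by the previous step. Since $\tau$ is the identity on $\sMCM(R)$ (because $\tau=[d-2]=[0]$), the dashed loops follow, and the quivers in the statement are obtained as in \cref{cor:RDPQuiver}.

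I expect the main obstacle to be the bookkeeping in the first step for the $D$ types. There the matrices $V$ and $W$ have piecewise entries depending on parity and on the special last column, and one must check carefully that the $L$-contributions at $E_1$ and at the branch curves produce exactly $\delta_{i,j}$. I would organize this by computing $(V;W)$ times the Cartan matrix as second differences along the chain, treating the branch node separately.
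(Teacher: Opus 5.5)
Your proposal is correct and follows the paper's proof. The paper verifies $c_1(\widetilde{M_i})\cdot E_j=-Z_i\cdot E_j=\delta_{i,j}$ from the explicit cycles in Proposition~\ref{prop:HardWork} (your second-difference checks do close up in all three types), concludes via Proposition~\ref{prop:AV} that the $M_i$ form a complete set of pairwise non-isomorphic indecomposables, and reads off the quiver from Theorem~\ref{thm:IrrConfig} with $\tau$ the identity; your additivity argument giving $M_i\cong N_i\oplus R^{a}$ simply spells out the step the paper compresses into ``hence''.
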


\begingroup

\AtBeginEnvironment{pNiceArray}{
    \setlength{\arraycolsep}{3pt}
    \renewcommand{\arraystretch}{0.8}
}

\section{Lists of the indecomposable objects}
\label{sec:Lists}
Let types $D_n$ $(n \geq 4)$, $E_6$, $E_7$ and $E_8$ in \cref{tab:RDPEq} also be denoted by
$D_n^0$, $E_6^0$, $E_7^0$ and $E_8^0$ respectively.
\subsection{\texorpdfstring{$A_n: f = z^{n + 1} + xy$}{An}}
\label{subsec:A_n}
For $i = 1, 2, \ldots, n$,
\begin{align*}
    \M_i \coloneqq \mbox{\footnotesize $\expar{
        \begin{pmatrix}
            z^{n + 1 - i} & -y \\
            x & z^i
        \end{pmatrix},
        \begin{pmatrix}
            z^i & y \\
            -x & z^{n + 1 - i}
        \end{pmatrix}
    }$}.
\end{align*}

\subsection{\texorpdfstring{$D_{2n}^r$: $f = z^{2} + x^2y + xy^n + (1 - \delta_{0, r}) xy^{n - r}z$}{Devenr}}
\label{subsec:D_even^r}
Let $\varepsilon$ denote $(1 - \delta_{0, r})$.
\begin{align*}
    &\M_1 \coloneqq \mbox{\footnotesize $\expar{
        \begin{pmatrix}
            z & x^2 + xy^{n - 1} \\
            -y & z + \varepsilon xy^{n - r}
        \end{pmatrix},
        \begin{pmatrix}
            z + \varepsilon xy^{n - r} & -x^2 - xy^{n - 1} \\
            y & z
        \end{pmatrix}
    }$}, \\
    &\M_{2n - 1} \coloneqq \mbox{\footnotesize $\expar{
        \begin{pmatrix}
            z & xy + y^n \\
            -x & z + \varepsilon xy^{n - r}
        \end{pmatrix},
        \begin{pmatrix}
            z + \varepsilon xy^{n - r} & -xy - y^n \\
            x & z
        \end{pmatrix}
    }$}, \\
    &\M_{2n} \coloneqq \mbox{\footnotesize $\expar{
        \begin{pmatrix}
            z & x + y^{n - 1} \\
            -xy & z + \varepsilon xy^{n - r}
        \end{pmatrix},
        \begin{pmatrix}
            z + \varepsilon xy^{n - r} & -x - y^{n - 1} \\
            xy & z
        \end{pmatrix}
    }$}.
\end{align*}
For $i = 1, 2, \ldots, n - 1$,
\[
    \M_{2i} \coloneqq \mbox{\footnotesize $\expar{
            \begin{pNiceArray}{cc:w{c}{5em}w{c}{2em}}[margin]
                \Block{2-2}<\small \boldmath>{z} & & xy & y^i \\
                & & -xy^{n - i} & x \\
                \hdottedline
                -x & y^i & \Block{2-2}<\small \boldmath>{z + \varepsilon xy^{n - r}} & \\
                -xy^{n - i} & -xy & &
            \end{pNiceArray},
            \begin{pNiceArray}{w{c}{3em}w{c}{4em}:cc}[margin]
                \Block{2-2}<\small \boldmath>{z + \varepsilon xy^{n - r}} & & -xy & -y^i \\
                & & xy^{n - i} & -x \\
                \hdottedline
                x & -y^i & \Block{2-2}<\small \boldmath>{z} & \\
                xy^{n - i} & xy & &
            \end{pNiceArray}
    }$}.
\]
For $i = 1, 2, \ldots, n - 2$,
\[
    \M_{2i + 1} \coloneqq \mbox{\footnotesize $\expar{
        \begin{pNiceArray}{cc:w{c}{4em}w{c}{3em}}[margin]
            \Block{2-2}<\small \boldmath>{z} & & xy & xy^{n - i} \\
            & & -y^{i + 1} & xy \\
            \hdottedline
            -x & xy^{n - i - 1} & \Block{2-2}<\small \boldmath>{z + \varepsilon xy^{n - r}} & \\
            -y^i & -x & &
        \end{pNiceArray},
        \begin{pNiceArray}{w{c}{1em}w{c}{6em}:cc}[margin]
            \Block{2-2}<\small \boldmath>{z + \varepsilon xy^{n - r}} & & -xy & -xy^{n - i} \\
            & & y^{i + 1} & -xy \\
            \hdottedline
            x & -xy^{n - i - 1} & \Block{2-2}<\small \boldmath>{z} & \\
            y^i & x & &
        \end{pNiceArray}
    }$}.
\]

\subsection{\texorpdfstring{$D_{2n + 1}^r$: $f = z^{2} + x^2y + y^nz + (1 - \delta_{0, r}) xy^{n - r}z$}{Doddr}}
\label{subsec:D_odd^r}
Let $\varepsilon$ denote $(1 - \delta_{0, r})$.
\begin{align*}
    &\M_1 \coloneqq \mbox{\footnotesize $\expar{
        \begin{pmatrix}
            z & x^2 \\
            -y & z + y^n + \varepsilon xy^{n - r}
        \end{pmatrix},
        \begin{pmatrix}
            z + y^n + \varepsilon xy^{n - r} & -x^2 \\
            y & z
        \end{pmatrix}
    }$}, \\
    &\M_{2n} \coloneqq \mbox{\footnotesize $\expar{
        \begin{pmatrix}
            z & xy \\
            -x & z + y^n + \varepsilon xy^{n - r}
        \end{pmatrix},
        \begin{pmatrix}
            z + y^n + \varepsilon xy^{n - r} & -xy \\
            x & z
        \end{pmatrix}
    }$}, \\
    &\M_{2n + 1} \coloneqq \mbox{\footnotesize $\expar{
        \begin{pmatrix}
            z & x \\
            -xy & z + y^n + \varepsilon xy^{n - r}
        \end{pmatrix},
        \begin{pmatrix}
            z + y^n + \varepsilon xy^{n - r} & -x \\
            xy & z
        \end{pmatrix}
    }$}.
\end{align*}
For $i = 1, 2, \ldots, n$,
\begin{align*}
    &\M_{2i} \coloneqq \mbox{\footnotesize $\expar{
            \begin{pNiceArray}{cc:w{c}{5em}w{c}{4em}}[margin]
                \Block{2-2}<\small \boldmath>{z} & & xy & y^{i} \\
                & & 0 & x \\
                \hdottedline
                -x & y^i & \Block{2-2}<\small \boldmath>{z + y^n + \varepsilon xy^{n - r}} & \\
                0 & -xy & &
            \end{pNiceArray},
            \begin{pNiceArray}{w{c}{4em}w{c}{5em}:cc}[margin]
                \Block{2-2}<\small \boldmath>{z + y^n + \varepsilon xy^{n - r}} & & -xy & -y^i \\
                & & 0 & -x \\
                \hdottedline
                x & -y^i & \Block{2-2}<\small \boldmath>{z} & \\
                0 & xy & &
            \end{pNiceArray}
    }$}, \\
    &\M_{2i + 1} \coloneqq \mbox{\footnotesize $\expar{
        \begin{pNiceArray}{cc:w{c}{5em}w{c}{4em}}[margin]
            \Block{2-2}<\small \boldmath>{z} & & xy & 0 \\
            & & -y^{i + 1} & xy \\
            \hdottedline
            -x & 0 & \Block{2-2}<\small \boldmath>{z + y^n + \varepsilon xy^{n - r}} & \\
            -y^i & -x & &
        \end{pNiceArray},
        \begin{pNiceArray}{w{c}{4em}w{c}{5em}:cc}[margin]
            \Block{2-2}<\small \boldmath>{z + y^n + \varepsilon xy^{n - r}} & & -xy & 0 \\
            & & y^{i + 1} & -xy \\
            \hdottedline
            x & 0 & \Block{2-2}<\small \boldmath>{z} & \\
            y^i & x & &
        \end{pNiceArray}
    }$}.
\end{align*}

\subsection{\texorpdfstring{$E_6^r$: $f = z^{2} + x^3 + y^2z + (1 - \delta_{0, r}) xyz$}{E6r}}
\label{subsec:E_6^r}
Let $\varepsilon$ denote $(1 - \delta_{0, r})$.
\begin{align*}
    &\M_{1} \coloneqq \mbox{\footnotesize $\expar{
        \begin{pmatrix}
            z & x^2 \\
            -x & z + y^2 + \varepsilon xy
        \end{pmatrix},
        \begin{pmatrix}
            z + y^2 + \varepsilon xy & -x^2 \\
            x & z
        \end{pmatrix}
    }$}, \\
    &\M_{2} \coloneqq \mbox{\footnotesize $\expar{
        \begin{pNiceArray}{cc:w{c}{3.5em}w{c}{3.5em}}[margin]
            \Block{2-2}<\small \boldmath>{z} & & x^2 & 0 \\
            & & xy & -x^2 \\
            \hdottedline
            -x & 0 & \Block{2-2}<\small \boldmath>{z + y^2 + \varepsilon xy} & \\
            -y & x & &
        \end{pNiceArray},
        \begin{pNiceArray}{w{c}{2.5em}w{c}{4.5em}:cc}[margin]
            \Block{2-2}<\small \boldmath>{z + y^2 + \varepsilon xy} & & -x^2 & 0 \\
            & & -xy & x^2 \\
            \hdottedline
            x & 0 & \Block{2-2}<\small \boldmath>{z} & \\
            y & -x & &
        \end{pNiceArray}
    }$}, \\
    &\M_{3} \coloneqq \mbox{\footnotesize $\expar{
        \begin{pNiceArray}{ccc:w{c}{2em}w{c}{2em}w{c}{2em}}[margin]
            \Block{3-3}<\normalsize \boldmath>{z} & & & x^2 & y^2 & xy \\
            & & & 0 & x^2 & 0 \\
            & & & 0 & -xy & -x^2 \\
            \hdottedline
            -x & 0 & -y & \Block{3-3}<\normalsize \boldmath>{z + y^2 + \varepsilon xy} & & \\
            0 & -x & 0 & & & \\
            0 & y & x & & &
        \end{pNiceArray},
        \begin{pNiceArray}{w{c}{1em}w{c}{2em}w{c}{3em}:ccc}[margin]
            \Block{3-3}<\normalsize \boldmath>{z + y^2 + \varepsilon xy} & & & -x^2 & -y^2 & -xy \\
            & & & 0 & -x^2 & 0 \\
            & & & 0 & xy & x^2 \\
            \hdottedline
            x & 0 & y & \Block{3-3}<\normalsize \boldmath>{z} & & \\
            0 & x & 0 & & & \\
            0 & -y & -x & & &
        \end{pNiceArray}
    }$}, \\
    &\M_{4} \coloneqq \mbox{\footnotesize $\expar{
        \begin{pNiceArray}{cc:w{c}{4em}w{c}{3em}}[margin]
            \Block{2-2}<\small \boldmath>{z} & & x & 0 \\
            & & -y & x \\
            \hdottedline
            -x^2 & 0 & \Block{2-2}<\small \boldmath>{z + y^2 + \varepsilon xy} & \\
            -xy & -x^2 & &
        \end{pNiceArray},
        \begin{pNiceArray}{w{c}{3em}w{c}{4em}:cc}[margin]
            \Block{2-2}<\small \boldmath>{z + y^2 + \varepsilon xy} & & -x & 0 \\
            & & y & -x \\
            \hdottedline
            x^2 & 0 & \Block{2-2}<\small \boldmath>{z} & \\
            xy & x^2 & &
        \end{pNiceArray}
    }$}, \\
    &\M_{5} \coloneqq \mbox{\footnotesize $\expar{
        \begin{pmatrix}
            z & x \\
            -x^2 & z + y^2 + \varepsilon xy
        \end{pmatrix},
        \begin{pmatrix}
            z + y^2 + \varepsilon xy & -x \\
            x^2 & z
        \end{pmatrix}
    }$}, \\
    &\M_{6} \coloneqq \mbox{\footnotesize $\expar{
        \begin{pNiceArray}{cc:w{c}{3.5em}w{c}{3.5em}}[margin]
            \Block{2-2}<\small \boldmath>{z} & & x^2 & y \\
            & & 0 & -x \\
            \hdottedline
            -x & -y & \Block{2-2}<\small \boldmath>{z + y^2 + \varepsilon xy} & \\
            0 & x^2 & &
        \end{pNiceArray},
        \begin{pNiceArray}{w{c}{2.5em}w{c}{4.5em}:cc}[margin]
            \Block{2-2}<\small \boldmath>{z + y^2 + \varepsilon xy} & & -x^2 & -y \\
            & & 0 & x \\
            \hdottedline
            x & y & \Block{2-2}<\small \boldmath>{z} & \\
            0 & -x^2 & &
        \end{pNiceArray}
    }$}.
\end{align*}

\subsection{\texorpdfstring{$E_{7}^0$: $f = z^2 + x^3 + xy^3$}{E70}}
\label{subsec:E_7^0}
\begin{align*}
    &\M_{1} \coloneqq \mbox{\footnotesize $\expar{
        \begin{pNiceArray}{cc:cc}[margin]
            \Block{2-2}<\small \boldmath>{z} & & x^2 & xy^2 \\
            & & y & -x \\
            \hdottedline
            -x & -xy^2 & \Block{2-2}<\small \boldmath>{z} & \\
            -y & x^2 & &
        \end{pNiceArray},
        \begin{pNiceArray}{cc:cc}[margin]
            \Block{2-2}<\small \boldmath>{z} & & -x^2 & -xy^2 \\
            & & -y & x \\
            \hdottedline
            x & xy^2 & \Block{2-2}<\small \boldmath>{z} & \\
            y & -x^2 & &
        \end{pNiceArray}
    }$}, \\
    &\M_{2} \coloneqq \mbox{\footnotesize $\expar{
        \begin{pNiceArray}{ccc:ccc}[margin]
            \Block{3-3}<\normalsize \boldmath>{z} & & & x^2 & xy^2 & -x^2y \\
            & & & -xy & x^2 & xy^2 \\
            & & & y^2 & -xy & x^2 \\
            \hdottedline
            -x & 0 & -xy & \Block{3-3}<\normalsize \boldmath>{z} & & \\
            -y & -x & 0 & & & \\
            0 & -y & -x & & &
        \end{pNiceArray},
        \begin{pNiceArray}{ccc:ccc}[margin]
            \Block{3-3}<\normalsize \boldmath>{z} & & & -x^2 & -xy^2 & x^2y \\
            & & & xy & -x^2 & -xy^2 \\
            & & & -y^2 & xy & -x^2 \\
            \hdottedline
            x & 0 & xy & \Block{3-3}<\normalsize \boldmath>{z} & & \\
            y & x & 0 & & & \\
            0 & y & x & & &
        \end{pNiceArray}
    }$}, \\
    &\M_{3} \coloneqq \mbox{\footnotesize $\expar{
        \begin{pNiceArray}{cccc:cccc}[margin] 
            \Block{4-4}<\large \boldmath \boldmath>{z} & & & & 0 & 0 & x^2 & xy^2 \\
            & & & & 0 & 0 & xy & -x^2 \\
            & & & & x & y^2 & 0 & -xy \\
            & & & & y & -x & x & 0 \\
            \hdottedline
            0 & xy & -x^2 & -xy^2 & \Block{4-4}<\large \boldmath \boldmath>{z} & & & \\
            -x & 0 & -xy & x^2 & & & & \\
            -x & -y^2 & 0 & 0 & & & & \\
            -y & x & 0 & 0 & & & &
        \end{pNiceArray},
        \begin{pNiceArray}{cccc:cccc}[margin]
            \Block{4-4}<\large \boldmath \boldmath>{z} & & & & 0 & 0 & -x^2 & -xy^2 \\
            & & & & 0 & 0 & -xy & x^2 \\
            & & & & -x & -y^2 & 0 & xy \\
            & & & & -y & x & -x & 0 \\
            \hdottedline
            0 & -xy & x^2 & xy^2 & \Block{4-4}<\large \boldmath \boldmath>{z} & & & \\
            x & 0 & xy & -x^2 & & & & \\
            x & y^2 & 0 & 0 & & & & \\
            y & -x & 0 & 0 & & & &
        \end{pNiceArray}
    }$}, \\
    &\M_{4} \coloneqq \mbox{\footnotesize $\expar{
        \begin{pNiceArray}{ccc:ccc}[margin]
            \Block{3-3}<\normalsize \boldmath>{z} & & & -xy & x^2 & xy^2 \\
            & & & y^2 & -xy & x^2 \\
            & & & x & y^2 & -xy \\
            \hdottedline
            0 & -xy & -x^2 & \Block{3-3}<\normalsize \boldmath>{z} & & \\
            -x & 0 & -xy & & & \\
            -y & -x & 0 & & &
        \end{pNiceArray},
        \begin{pNiceArray}{ccc:ccc}[margin]
            \Block{3-3}<\normalsize \boldmath>{z} & & & xy & -x^2 & -xy^2 \\
            & & & -y^2 & xy & -x^2 \\
            & & & -x & -y^2 & xy \\
            \hdottedline
            0 & xy & x^2 & \Block{3-3}<\normalsize \boldmath>{z} & & \\
            x & 0 & xy & & & \\
            y & x & 0 & & &
        \end{pNiceArray}
    }$}, \\
    &\M_{5} \coloneqq \mbox{\footnotesize $\expar{
        \begin{pNiceArray}{cc:cc}[margin]
            \Block{2-2}<\small \boldmath>{z} & & xy & x^2 \\
            & & x & -y^2 \\
            \hdottedline
            -y^2 & -x^2 & \Block{2-2}<\small \boldmath>{z} & \\
            -x & xy & &
        \end{pNiceArray},
        \begin{pNiceArray}{cc:cc}[margin]
            \Block{2-2}<\small \boldmath>{z} & & -xy & -x^2 \\
            & & -x & y^2 \\
            \hdottedline
            y^2 & x^2 & \Block{2-2}<\small \boldmath>{z} & \\
            x & -xy & &
        \end{pNiceArray}
    }$}, \\
    &\M_{6} \coloneqq \mbox{\footnotesize $\expar{
        \begin{pmatrix}
            z & -x^2 - y^3 \\
            x & z
        \end{pmatrix},
        \begin{pmatrix}
            z & x^2 + y^3 \\
            -x & z
        \end{pmatrix}
    }$}, \\
    &\M_{7} \coloneqq \mbox{\footnotesize $\expar{
        \begin{pNiceArray}{cc:cc}[margin]
            \Block{2-2}<\small \boldmath>{z} & & x^2 & xy^2 \\
            & & xy & -x^2 \\
            \hdottedline
            -x & -y^2 & \Block{2-2}<\small \boldmath>{z} & \\
            -y & x & &
        \end{pNiceArray},
        \begin{pNiceArray}{cc:cc}[margin]
            \Block{2-2}<\small \boldmath>{z} & & -x^2 & -xy^2 \\
            & & -xy & x^2 \\
            \hdottedline
            x & y^2 & \Block{2-2}<\small \boldmath>{z} & \\
            y & -x & &
        \end{pNiceArray}
    }$}.
\end{align*}

\subsection{\texorpdfstring{$E_{7}^1$ in characteristic $3$: $f = z^2 + x^3 + xy^3 + x^2y^2$}{E71InChar3}}
\label{subsec:E_7^1InChar3}
\begin{align*}
    &\M_{1} \coloneqq \mbox{\footnotesize $\expar{
        \begin{pNiceArray}{cc:cc}[margin]
            \Block{2-2}<\small \boldmath>{z} & & x^2 & xy^2 \\
            & & x + y & -x \\
            \hdottedline
            -x & -xy^2 & \Block{2-2}<\small \boldmath>{z} & \\
            -x - y & x^2 & &
        \end{pNiceArray},
        \begin{pNiceArray}{cc:cc}[margin]
            \Block{2-2}<\small \boldmath>{z} & & -x^2 & -xy^2 \\
            & & -x - y & x \\
            \hdottedline
            x & xy^2 & \Block{2-2}<\small \boldmath>{z} & \\
            x + y & -x^2 & &
        \end{pNiceArray}
    }$}, \\
    &\M_{2} \coloneqq \mbox{\footnotesize $\mleft\lparen
        \begin{pNiceArray}{ccc:ccc}[margin]
            \Block{3-3}<\normalsize \boldmath>{z} & & & x^2 & x^2y + xy^2 & 0 \\
            & & & -xy & x^2 & 0 \\
            & & & -y^2 & xy & x^2 + xy^2 + y^3 \\
            \hdottedline
            -x & xy + y^2 & 0 & \Block{3-3}<\normalsize \boldmath>{z} & & \\
            -y & -x & 0 & & & \\
            0 & y & -x & & &
        \end{pNiceArray} \mright.$,} \\
        \tag*{\footnotesize $
            \mleft. \begin{pNiceArray}{ccc:ccc}[margin]
                \Block{3-3}<\normalsize \boldmath>{z} & & & -x^2 & -x^2y - xy^2 & 0 \\
                & & & xy & -x^2 & 0 \\
                & & & y^2 & -xy & -x^2 - xy^2 - y^3 \\
                \hdottedline
                x & -xy - y^2 & 0 & \Block{3-3}<\normalsize \boldmath>{z} & & \\
                y & x & 0 & & & \\
                0 & -y & x & & &
            \end{pNiceArray} \mright\rparen
        $,} \\
    &\M_{3} \coloneqq \mbox{\footnotesize $\mleft\lparen
        \begin{pNiceArray}{cccc:cccc}[margin] 
            \Block{4-4}<\large \boldmath>{z} & & & & 0 & 0 & x^2 & x^2y + xy^2 \\
            & & & & 0 & 0 & -xy & x^2 \\
            & & & & -y^2 & x + y^2 & -x & -xy \\
            & & & & -x & -y & y & 0 \\
            \hdottedline
            0 & y^2 & xy & x^2 + xy^2 & \Block{4-4}<\large \boldmath>{z} & & & \\
            -x & y^2 & -x^2 & xy^2 & & & & \\
            -x & xy + y^2 & 0 & 0 & & & & \\
            -y & -x & 0 & 0 & & & &
        \end{pNiceArray} \mright.$,} \\
            \tag*{\footnotesize $
                \mleft. \begin{pNiceArray}{cccc:cccc}[margin]
                    \Block{4-4}<\large \boldmath>{z} & & & & 0 & 0 & -x^2 & -x^2y - xy^2 \\
                    & & & & 0 & 0 & xy & -x^2 \\
                    & & & & y^2 & -x - y^2 & x & xy \\
                    & & & & x & y & -y & 0 \\
                    \hdottedline
                    0 & -y^2 & -xy & -x^2 - xy^2 & \Block{4-4}<\large \boldmath>{z} & & & \\
                    x & -y^2 & x^2 & -xy^2 & & & & \\
                    x & -xy - y^2 & 0 & 0 & & & & \\
                    y & x & 0 & 0 & & & &
                \end{pNiceArray} \mright\rparen
            $,} \\
    &\M_{4} \coloneqq \mbox{\footnotesize $\mleft\lparen
        \begin{pNiceArray}{ccc:ccc}[margin]
            \Block{3-3}<\normalsize \boldmath>{z} & & & 0 & x^2 + xy^2 + y^3 & 0 \\
            & & & y^2 & -xy - y^3 & x^2 + xy^2 \\
            & & & x & y^2 & -xy \\
            \hdottedline
            0 & -xy & -xy^2 - x^2 & \Block{3-3}<\normalsize \boldmath>{z} & & \\
            -x & 0 & 0 & & & \\
            -y & -x & y^2 & & &
        \end{pNiceArray} \mright.$,} \\
        \tag*{\footnotesize $
            \mleft. \begin{pNiceArray}{ccc:ccc}[margin]
                \Block{3-3}<\normalsize \boldmath>{z} & & & 0 & -x^2 - xy^2 - y^3 & 0 \\
                & & & -y^2 & xy + y^3 & -x^2 - xy^2 \\
                & & & -x & -y^2 & xy \\
                \hdottedline
                0 & xy & xy^2 + x^2 & \Block{3-3}<\normalsize \boldmath>{z} & & \\
                x & 0 & 0 & & & \\
                y & x & -y^2 & & &
            \end{pNiceArray} \mright\rparen
        $,} \\
    &\M_{5} \coloneqq \mbox{\footnotesize $\expar{
        \begin{pNiceArray}{cc:cc}[margin]
            \Block{2-2}<\small \boldmath>{z} & & x^2 + xy & x^2 \\
            & & x & -y^2 \\
            \hdottedline
            -y^2 & -x^2 & \Block{2-2}<\small \boldmath>{z} & \\
            -x & x^2 + xy & &
        \end{pNiceArray},
        \begin{pNiceArray}{cc:cc}[margin]
            \Block{2-2}<\small \boldmath>{z} & & -x^2 - xy & -x^2 \\
            & & -x & y^2 \\
            \hdottedline
            y^2 & x^2 & \Block{2-2}<\small \boldmath>{z} & \\
            x & -x^2 - xy & &
        \end{pNiceArray}
    }$}, \\
    &\M_{6} \coloneqq \mbox{\footnotesize $\expar{
        \begin{pmatrix}
            z & -x^2 - xy^2 - y^3 \\
            x & z
        \end{pmatrix},
        \begin{pmatrix}
            z & x^2 + xy^2 + y^3 \\
            -x & z
        \end{pmatrix}
    }$}, \\
    &\M_{7} \coloneqq \mbox{\footnotesize $\expar{
        \begin{pNiceArray}{cc:cc}[margin]
            \Block{2-2}<\small \boldmath>{z} & & x^2 & xy^2 \\
            & & x^2 + xy & -x^2 \\
            \hdottedline
            -x & -y^2 & \Block{2-2}<\small \boldmath>{z} & \\
            -x - y & x & &
        \end{pNiceArray},
        \begin{pNiceArray}{cc:cc}[margin]
            \Block{2-2}<\small \boldmath>{z} & & -x^2 & -xy^2 \\
            & & -x^2 - xy & x^2 \\
            \hdottedline
            x & y^2 & \Block{2-2}<\small \boldmath>{z} & \\
            x + y & -x & &
        \end{pNiceArray}
    }$}.
\end{align*}

\subsection{\texorpdfstring{$E_{7}^r$ in characteristic $2$: $f = z^2 + x^3 + xy^3 + zg$}{E7rInChar2}}
\label{subsec:E_7^rInChar2}
Here, 
\[ 
    g \coloneqq
    \begin{cases}
        0 & \text{if $r = 0$,} \\
        x^2y & \text{if $r = 1$,} \\
        y^3 & \text{if $r = 2$,} \\
        xy & \text{if $r = 3$.}
    \end{cases}
\]

\begin{align*}
    &\M_{1} \coloneqq \mbox{\footnotesize $\expar{
        \begin{pNiceArray}{cc:w{c}{2em}w{c}{1em}}[margin]
            \Block{2-2}<\small \boldmath>{z} & & x^2 & xy^2 \\
            & & y & x \\
            \hdottedline
            x & xy^2 & \Block{2-2}<\small \boldmath>{z + g} & \\
            y & x^2 & &
        \end{pNiceArray},
        \begin{pNiceArray}{w{c}{1em}w{c}{2em}:cc}[margin]
            \Block{2-2}<\small \boldmath>{z + g} & & x^2 & xy^2 \\
            & & y & x \\
            \hdottedline
            x & xy^2 & \Block{2-2}<\small \boldmath>{z} & \\
            y & x^2 & &
        \end{pNiceArray}
    }$}, \\
    &\M_{2} \coloneqq \mbox{\footnotesize $\expar{
        \begin{pNiceArray}{ccc:ccc}[margin]
            \Block{3-3}<\normalsize \boldmath>{z} & & & x^2 & xy^2 & x^2y \\
            & & & xy & x^2 & xy^2 \\
            & & & y^2 & xy & x^2 \\
            \hdottedline
            x & 0 & xy & \Block{3-3}<\normalsize \boldmath>{z + g} & & \\
            y & x & 0 & & & \\
            0 & y & x & & &
        \end{pNiceArray},
        \begin{pNiceArray}{ccc:ccc}[margin]
            \Block{3-3}<\normalsize \boldmath>{z + g} & & & x^2 & xy^2 & x^2y \\
            & & & xy & x^2 & xy^2 \\
            & & & y^2 & xy & x^2 \\
            \hdottedline
            x & 0 & xy & \Block{3-3}<\normalsize \boldmath>{z} & & \\
            y & x & 0 & & & \\
            0 & y & x & & &
        \end{pNiceArray}
    }$}, \\
    &\M_{3} \coloneqq \mbox{\footnotesize $\expar{
        \begin{pNiceArray}{cccc:cccc}[margin] 
            \Block{4-4}<\large \boldmath>{z} & & & & 0 & 0 & x^2 & xy^2 \\
            & & & & 0 & 0 & xy & x^2 \\
            & & & & x & y^2 & 0 & xy \\
            & & & & y & x & x & 0 \\
            \hdottedline
            0 & xy & x^2 & xy^2 & \Block{4-4}<\large \boldmath>{z + g} & & & \\
            x & 0 & xy & x^2 & & & & \\
            x & y^2 & 0 & 0 & & & & \\
            y & x & 0 & 0 & & & &
        \end{pNiceArray},
        \begin{pNiceArray}{cccc:cccc}[margin]
            \Block{4-4}<\large \boldmath>{z + g} & & & & 0 & 0 & x^2 & xy^2 \\
            & & & & 0 & 0 & xy & x^2 \\
            & & & & x & y^2 & 0 & xy \\
            & & & & y & x & x & 0 \\
            \hdottedline
            0 & xy & x^2 & xy^2 & \Block{4-4}<\large \boldmath>{z} & & & \\
            x & 0 & xy & x^2 & & & & \\
            x & y^2 & 0 & 0 & & & & \\
            y & x & 0 & 0 & & & &
        \end{pNiceArray}
    }$}, \\
    &\M_{4} \coloneqq \mbox{\footnotesize $\expar{
        \begin{pNiceArray}{ccc:ccc}[margin]
            \Block{3-3}<\normalsize \boldmath>{z} & & & xy & x^2 & xy^2 \\
            & & & y^2 & xy & x^2 \\
            & & & x & y^2 & xy \\
            \hdottedline
            0 & xy & x^2 & \Block{3-3}<\normalsize \boldmath>{z + g} & & \\
            x & 0 & xy & & & \\
            y & x & 0 & & &
        \end{pNiceArray},
        \begin{pNiceArray}{ccc:ccc}[margin]
            \Block{3-3}<\normalsize \boldmath>{z + g} & & & xy & x^2 & xy^2 \\
            & & & y^2 & xy & x^2 \\
            & & & x & y^2 & xy \\
            \hdottedline
            0 & xy & x^2 & \Block{3-3}<\normalsize \boldmath>{z} & & \\
            x & 0 & xy & & & \\
            y & x & 0 & & &
        \end{pNiceArray}
    }$}, \\
    &\M_{5} \coloneqq \mbox{\footnotesize $\expar{
        \begin{pNiceArray}{cc:w{c}{2em}w{c}{1em}}[margin]
            \Block{2-2}<\small \boldmath>{z} & & xy & x^2 \\
            & & x & y^2 \\
            \hdottedline
            y^2 & x^2 & \Block{2-2}<\small \boldmath>{z + g} & \\
            x & xy & &
        \end{pNiceArray},
        \begin{pNiceArray}{w{c}{1em}w{c}{2em}:cc}[margin]
            \Block{2-2}<\small \boldmath>{z + g} & & xy & x^2 \\
            & & x & y^2 \\
            \hdottedline
            y^2 & x^2 & \Block{2-2}<\small \boldmath>{z} & \\
            x & xy & &
        \end{pNiceArray}
    }$}, \\
    &\M_{6} \coloneqq \mbox{\footnotesize $\expar{
        \begin{pmatrix}
            z & x^2 + y^3 \\
            x & z + g
        \end{pmatrix},
        \begin{pmatrix}
            z + g & x^2 + y^3 \\
            x & z
        \end{pmatrix}
    }$}, \\
    &\M_{7} \coloneqq \mbox{\footnotesize $\expar{
        \begin{pNiceArray}{cc:w{c}{2em}w{c}{1em}}[margin]
            \Block{2-2}<\small \boldmath>{z} & & x^2 & xy^2 \\
            & & xy & x^2 \\
            \hdottedline
            x & y^2 & \Block{2-2}<\small \boldmath>{z + g} & \\
            y & x & &
        \end{pNiceArray},
        \begin{pNiceArray}{w{c}{1em}w{c}{2em}:cc}[margin]
            \Block{2-2}<\small \boldmath>{z + g} & & x^2 & xy^2 \\
            & & xy & x^2 \\
            \hdottedline
            x & y^2 & \Block{2-2}<\small \boldmath>{z} & \\
            y & x & &
        \end{pNiceArray}
    }$}.
\end{align*}

\subsection{\texorpdfstring{$E_{8}^0$: $f = z^2 + x^3 + y^5$}{E80}}
\label{subsec:E_8^0}
\begin{align*}
    &\M_{1} \coloneqq \mbox{\footnotesize $\expar{
        \begin{pNiceArray}{cc:cc}[margin]
            \Block{2-2}<\small \boldmath>{z} & & -y^3 & -x^2 \\
            & & x & -y^2 \\
            \hdottedline
            y^2 & -x^2 & \Block{2-2}<\small \boldmath>{z} & \\
            x & y^3 & &
        \end{pNiceArray},
        \begin{pNiceArray}{cc:cc}[margin]
            \Block{2-2}<\small \boldmath>{z} & & y^3 & x^2 \\
            & & -x & y^2 \\
            \hdottedline
            -y^2 & x^2 & \Block{2-2}<\small \boldmath>{z} & \\
            -x & -y^3 & &
        \end{pNiceArray}
    }$}, \\
    &\M_{2} \coloneqq \mbox{\footnotesize $\expar{
        \begin{pNiceArray}{cccc:cccc}[margin] 
            \Block{4-4}<\large \boldmath>{z} & & & & 0 & -y^3 & -x^2 & 0 \\
            & & & & -y^2 & 0 & xy & -x^2 \\
            & & & & -x & -y^2 & 0 & y^3 \\
            & & & & 0 & -x & y^2 & 0 \\
            \hdottedline
            0 & y^3 & x^2 & -xy^2 & \Block{4-4}<\large \boldmath>{z} & & & \\
            y^2 & 0 & 0 & x^2 & & & & \\
            x & 0 & 0 & -y^3 & & & & \\
            y & x & -y^2 & 0 & & & &
        \end{pNiceArray},
        \begin{pNiceArray}{cccc:cccc}[margin]
            \Block{4-4}<\large \boldmath>{z} & & & & 0 & y^3 & x^2 & 0 \\
            & & & & y^2 & 0 & -xy & x^2 \\
            & & & & x & y^2 & 0 & -y^3 \\
            & & & & 0 & x & -y^2 & 0 \\
            \hdottedline
            0 & -y^3 & -x^2 & xy^2 & \Block{4-4}<\large \boldmath>{z} & & & \\
            -y^2 & 0 & 0 & -x^2 & & & & \\
            -x & 0 & 0 & y^3 & & & & \\
            -y & -x & y^2 & 0 & & & &
        \end{pNiceArray}
    }$}, \\
    &\M_{3} \coloneqq \mbox{\footnotesize $\mleft\lparen
        \begin{pNiceArray}{cccccc:cccccc}[margin] 
            \Block{6-6}<\LARGE \boldmath>{z} & & & & & & 0 & 0 & 0 & -x^2 & xy^2 & -y^4 \\
            & & & & & & 0 & 0 & 0 & -y^3 & -x^2 & xy^2 \\
            & & & & & & 0 & 0 & 0 & xy & -y^3 & -x^2 \\
            & & & & & & -x & -y^2 & 0 & 0 & 0 & y^3 \\
            & & & & & & 0 & -x & -y^2 & y^2 & 0 & 0 \\
            & & & & & & -y & 0 & -x & 0 & y^2 & 0 \\
            \hdottedline
            0 & 0 & y^3 & x^2 & -xy^2 & y^4 & \Block{6-6}<\LARGE \boldmath>{z} & & & & & \\
            y^2 & 0 & 0 & y^3 & x^2 & -xy^2 & & & & & & \\
            0 & y^2 & 0 & -xy & y^3 & x^2 & & & & & & \\
            x & y^2 & 0 & 0 & 0 & 0 & & & & & & \\
            0 & x & y^2 & 0 & 0 & 0 & & & & & & \\
            y & 0 & x & 0 & 0 & 0 & & & & & &
        \end{pNiceArray} \mright.$,} \\
            \tag*{\footnotesize $
                \mleft. \begin{pNiceArray}{cccccc:cccccc}[margin]
                    \Block{6-6}<\LARGE \boldmath>{z} & & & & & & 0 & 0 & 0 & x^2 & -xy^2 & y^4 \\
                    & & & & & & 0 & 0 & 0 & y^3 & x^2 & -xy^2 \\
                    & & & & & & 0 & 0 & 0 & -xy & y^3 & x^2 \\
                    & & & & & & x & y^2 & 0 & 0 & 0 & -y^3 \\
                    & & & & & & 0 & x & y^2 & -y^2 & 0 & 0 \\
                    & & & & & & y & 0 & x & 0 & -y^2 & 0 \\
                    \hdottedline
                    0 & 0 & -y^3 & -x^2 & xy^2 & -y^4 & \Block{6-6}<\LARGE \boldmath>{z} & & & & & \\
                    -y^2 & 0 & 0 & -y^3 & -x^2 & xy^2 & & & & & & \\
                    0 & -y^2 & 0 & xy & -y^3 & -x^2 & & & & & & \\
                    -x & -y^2 & 0 & 0 & 0 & 0 & & & & & & \\
                    0 & -x & -y^2 & 0 & 0 & 0 & & & & & & \\
                    -y & 0 & -x & 0 & 0 & 0 & & & & & &
                \end{pNiceArray} \mright\rparen
            $,} \\
    &\M_{4} \coloneqq \mbox{\footnotesize $\mleft\lparen
        \begin{pNiceArray}{ccccc:ccccc}[margin] 
            \Block{5-5}<\Large \boldmath>{z} & & & & & -y^3 & x^2 & 0 & 0 & 0 \\
            & & & & & 0 & y^3 & -x^2 & xy^2 & -y^4 \\
            & & & & & 0 & -xy & -y^3 & -x^2 & xy^2 \\
            & & & & & y^2 & 0 & xy & -y^3 & -x^2 \\
            & & & & & -x & -y^2 & 0 & 0 & 0 \\
            \hdottedline
            y^2 & 0 & 0 & 0 & x^2 & \Block{5-5}<\Large \boldmath>{z} & & & & \\
            -x & 0 & 0 & 0 & y^3 & & & & & \\
            0 & x & y^2 & 0 & 0 & & & & & \\
            y & 0 & x & y^2 & 0 & & & & & \\
            0 & y & 0 & x & y^2 & & & & &
        \end{pNiceArray} \mright.$,} \\
            \tag*{\footnotesize $
                \mleft. \begin{pNiceArray}{ccccc:ccccc}[margin]
                    \Block{5-5}<\Large \boldmath>{z} & & & & & y^3 & -x^2 & 0 & 0 & 0 \\
                    & & & & & 0 & -y^3 & x^2 & -xy^2 & y^4 \\
                    & & & & & 0 & xy & y^3 & x^2 & -xy^2 \\
                    & & & & & -y^2 & 0 & -xy & y^3 & x^2 \\
                    & & & & & x & y^2 & 0 & 0 & 0 \\
                    \hdottedline
                    -y^2 & 0 & 0 & 0 & -x^2 & \Block{5-5}<\Large \boldmath>{z} & & & & \\
                    x & 0 & 0 & 0 & -y^3 & & & & & \\
                    0 & -x & -y^2 & 0 & 0 & & & & & \\
                    -y & 0 & -x & -y^2 & 0 & & & & & \\
                    0 & -y & 0 & -x & -y^2 & & & & &
                \end{pNiceArray} \mright\rparen
            $,} \\
    &\M_{5} \coloneqq \mbox{\footnotesize $\expar{
        \begin{pNiceArray}{cccc:cccc}[margin] 
            \Block{4-4}<\large \boldmath>{z} & & & & xy & -y^2 & -x^2 & 0 \\
            & & & & -y^3 & 0 & 0 & -x \\
            & & & & x^2 & 0 & 0 & -y^2 \\
            & & & & 0 & x & -y^3 & -y \\
            \hdottedline
            0 & y^2 & -x & 0 & \Block{4-4}<\large \boldmath>{z} & & & \\
            y^3 & xy & 0 & -x^2 & & & & \\
            x & 0 & -y & y^2 & & & & \\
            0 & x^2 & y^3 & 0 & & & &
        \end{pNiceArray},
        \begin{pNiceArray}{cccc:cccc}[margin]
            \Block{4-4}<\large \boldmath>{z} & & & & -xy & y^2 & x^2 & 0 \\
            & & & & y^3 & 0 & 0 & x \\
            & & & & -x^2 & 0 & 0 & y^2 \\
            & & & & 0 & -x & y^3 & y \\
            \hdottedline
            0 & -y^2 & x & 0 & \Block{4-4}<\large \boldmath>{z} & & & \\
            -y^3 & -xy & 0 & x^2 & & & & \\
            -x & 0 & y & -y^2 & & & & \\
            0 & -x^2 & -y^3 & 0 & & & &
        \end{pNiceArray}
    }$}, \\
    &\M_{6} \coloneqq \mbox{\footnotesize $\expar{
        \begin{pNiceArray}{ccc:ccc}[margin]
            \Block{3-3}<\normalsize \boldmath>{z} & & & -x^2 & -y^4 & xy^3 \\
            & & & xy & -x^2 & -y^4 \\
            & & & -y^2 & xy & -x^2 \\
            \hdottedline
            x & 0 & y^3 & \Block{3-3}<\normalsize \boldmath>{z} & & \\
            y & x & 0 & & & \\
            0 & y & x & & &
        \end{pNiceArray},
        \begin{pNiceArray}{ccc:ccc}[margin]
            \Block{3-3}<\normalsize \boldmath>{z} & & & x^2 & y^4 & -xy^3 \\
            & & & -xy & x^2 & y^4 \\
            & & & y^2 & -xy & x^2 \\
            \hdottedline
            -x & 0 & -y^3 & \Block{3-3}<\normalsize \boldmath>{z} & & \\
            -y & -x & 0 & & & \\
            0 & -y & -x & & &
        \end{pNiceArray}
    }$}, \\
    &\M_{7} \coloneqq \mbox{\footnotesize $\expar{
        \begin{pNiceArray}{cc:cc}[margin]
            \Block{2-2}<\small \boldmath>{z} & & -x^2 & -y^4 \\
            & & -y & x \\
            \hdottedline
            x & y^4 & \Block{2-2}<\small \boldmath>{z} & \\
            y & -x^2 & &
        \end{pNiceArray},
        \begin{pNiceArray}{cc:cc}[margin]
            \Block{2-2}<\small \boldmath>{z} & & x^2 & y^4 \\
            & & y & -x \\
            \hdottedline
            -x & -y^4 & \Block{2-2}<\small \boldmath>{z} & \\
            -y & x^2 & &
        \end{pNiceArray}
    }$}, \\
    &\M_{8} \coloneqq \mbox{\footnotesize $\expar{
        \begin{pNiceArray}{ccc:ccc}[margin]
            \Block{3-3}<\normalsize \boldmath>{z} & & & -x^2 & xy^2 & -y^4 \\
            & & & -y^3 & -x^2 & xy^2 \\
            & & & xy & -y^3 & -x^2 \\
            \hdottedline
            x & y^2 & 0 & \Block{3-3}<\normalsize \boldmath>{z} & & \\
            0 & x & y^2 & & & \\
            y & 0 & x & & &
        \end{pNiceArray},
        \begin{pNiceArray}{ccc:ccc}[margin]
            \Block{3-3}<\normalsize \boldmath>{z} & & & x^2 & -xy^2 & y^4 \\
            & & & y^3 & x^2 & -xy^2 \\
            & & & -xy & y^3 & x^2 \\
            \hdottedline
            -x & -y^2 & 0 & \Block{3-3}<\normalsize \boldmath>{z} & & \\
            0 & -x & -y^2 & & & \\
            -y & 0 & -x & & &
        \end{pNiceArray}
    }$}. 
\end{align*}

\subsection{\texorpdfstring{$E_{8}^1$ in characteristic $5$: $f = z^2 + x^3 + y^5 + xy^4$}{E81InChar5}}
\label{subsec:E_8^1InChar5}
\begin{align*}
    &\M_{1} \coloneqq \mbox{\footnotesize $\expar{
        \begin{pNiceArray}{cc:cc}[margin]
            \Block{2-2}<\small  \boldmath>{z} & & -y^3 & -x^2 - y^4 \\
            & & x & -y^2 \\
            \hdottedline
            y^2 & -x^2 - y^4 & \Block{2-2}<\small  \boldmath>{z} & \\
            x & y^3 & &
        \end{pNiceArray},
        \begin{pNiceArray}{cc:cc}[margin]
            \Block{2-2}<\small  \boldmath>{z} & & y^3 & x^2 + y^4 \\
            & & -x & y^2 \\
            \hdottedline
            -y^2 & x^2 + y^4 & \Block{2-2}<\small  \boldmath>{z} & \\
            -x & -y^3 & &
        \end{pNiceArray}
    }$}, \\
    &\M_{2} \coloneqq \mbox{\footnotesize $\mleft\lparen
        \begin{pNiceArray}{cccc:cccc}[margin] 
            \Block{4-4}<\large  \boldmath>{z} & & & & 0 & -y^3 & -x^2 - y^4 & 0 \\
            & & & & -y^2 & y^3 & xy & -x^2 - y^4 \\
            & & & & -x & -y^2 & 0 & y^3 \\
            & & & & 0 & -x & y^2 & 0 \\
            \hdottedline
            0 & y^3 & x^2 + y^4 & -xy^2 & \Block{4-4}<\large  \boldmath>{z} & & & \\
            y^2 & 0 & 0 & x^2 + y^4 & & & & \\
            x & 0 & 0 & -y^3 & & & & \\
            y & x & -y^2 & y^3 & & & &
        \end{pNiceArray} \mright.$,} \\
            \tag*{\footnotesize $
                \mleft. \begin{pNiceArray}{cccc:cccc}[margin]
                    \Block{4-4}<\large  \boldmath>{z} & & & & 0 & y^3 & x^2 + y^4 & 0 \\
                    & & & & y^2 & -y^3 & -xy & x^2 + y^4 \\
                    & & & & x & y^2 & 0 & -y^3 \\
                    & & & & 0 & x & -y^2 & 0 \\
                    \hdottedline
                    0 & -y^3 & -x^2 - y^4 & xy^2 & \Block{4-4}<\large  \boldmath>{z} & & & \\
                    -y^2 & 0 & 0 & -x^2 - y^4 & & & & \\
                    -x & 0 & 0 & y^3 & & & & \\
                    -y & -x & y^2 & -y^3 & & & &
                \end{pNiceArray} \mright\rparen
            $,} \\
    &\M_{3} \coloneqq \mbox{\footnotesize $\mleft\lparen
        \begin{pNiceArray}{cccccc:cccccc}[margin] 
            \Block{6-6}<\LARGE  \boldmath>{z} & & & & & & 0 & y^3 & 0 & -x^2 - y^4 & 0 & 0 \\
            & & & & & & -y^2 & y^3 & 0 & -xy & 0 & x^2 + y^4 \\
            & & & & & & y & -y^2 & -xy - y^2 & 0 & -x & -y^3 \\
            & & & & & & 0 & -x & 0 & -y^2 & 0 & 0 \\
            & & & & & & -x & -y^2 & 0 & 0 & 0 & -y^3 \\
            & & & & & & 0 & -xy & -x^2 & 0 & y^3 & -xy^2 \\
            \hdottedline
            0 & y^3 & 0 & -xy^2 & x^2 + y^4 & 0 & \Block{6-6}<\LARGE  \boldmath>{z} & & & & & \\
            -y^2 & 0 & 0 & x^2 + y^4 & 0 & 0 & & & & & & \\
            0 & y^2 & y^3 & -xy & 0 & x & & & & & & \\
            x & 0 & 0 & y^3 & 0 & 0 & & & & & & \\
            0 & 0 & x^2 & 0 & xy & -xy - y^2 & & & & & & \\
            y & -x & 0 & -y^3 & y^2 & 0 & & & & & &
        \end{pNiceArray} \mright.$,} \\
            \tag*{\footnotesize $
                \mleft. \begin{pNiceArray}{cccccc:cccccc}[margin]
                    \Block{6-6}<\LARGE  \boldmath>{z} & & & & & & 0 & -y^3 & 0 & x^2 + y^4 & 0 & 0 \\
                    & & & & & & y^2 & -y^3 & 0 & xy & 0 & -x^2 - y^4 \\
                    & & & & & & -y & y^2 & xy + y^2 & 0 & x & y^3 \\
                    & & & & & & 0 & x & 0 & y^2 & 0 & 0 \\
                    & & & & & & x & y^2 & 0 & 0 & 0 & y^3 \\
                    & & & & & & 0 & xy & x^2 & 0 & -y^3 & xy^2 \\
                    \hdottedline
                    0 & -y^3 & 0 & xy^2 & -x^2 - y^4 & 0 & \Block{6-6}<\LARGE  \boldmath>{z} & & & & & \\
                    y^2 & 0 & 0 & -x^2 - y^4 & 0 & 0 & & & & & & \\
                    0 & -y^2 & -y^3 & xy & 0 & -x & & & & & & \\
                    -x & 0 & 0 & -y^3 & 0 & 0 & & & & & & \\
                    0 & 0 & -x^2 & 0 & -xy & xy + y^2 & & & & & & \\
                    -y & x & 0 & y^3 & -y^2 & 0 & & & & & &
                \end{pNiceArray} \mright\rparen
            $,} \\
    &\M_{4} \coloneqq \mbox{\footnotesize $\mleft\lparen
        \begin{pNiceArray}{ccccc:ccccc}[margin] 
            \Block{5-5}<\Large \boldmath>{z} & & & & & -y^3 & x^2 & 0 & -xy^3 & 0 \\
            & & & & & 0 & y^3 & x^2 + y^4 & xy^2 & -y^4 \\
            & & & & & 0 & -xy & y^3 & -x^2 & xy^2 \\
            & & & & & y^2 & 0 & -xy & -y^3 & -x^2 \\
            & & & & & -x & -y^2 & -y^3 & 0 & -xy^2 \\
            \hdottedline
            y^2 & 0 & 0 & -xy^2 & x^2 & \Block{5-5}<\Large \boldmath>{z} & & & & \\
            -x & 0 & y^3 & 0 & y^3 & & & & & \\
            0 & -x & -y^2 & 0 & 0 & & & & & \\
            y & 0 & x & y^2 & 0 & & & & & \\
            0 & y & 0 & x & y^2 & & & & &
        \end{pNiceArray} \mright.$,} \\
            \tag*{\footnotesize $
                \mleft. \begin{pNiceArray}{ccccc:ccccc}[margin]
                    \Block{5-5}<\Large \boldmath>{z} & & & & & y^3 & -x^2 & 0 & xy^3 & 0 \\
                    & & & & & 0 & -y^3 & -x^2 - y^4 & -xy^2 & y^4 \\
                    & & & & & 0 & xy & -y^3 & x^2 & -xy^2 \\
                    & & & & & -y^2 & 0 & xy & y^3 & x^2 \\
                    & & & & & x & y^2 & y^3 & 0 & xy^2 \\
                    \hdottedline
                    -y^2 & 0 & 0 & xy^2 & -x^2 & \Block{5-5}<\Large \boldmath>{z} & & & & \\
                    x & 0 & -y^3 & 0 & -y^3 & & & & & \\
                    0 & x & y^2 & 0 & 0 & & & & & \\
                    -y & 0 & -x & -y^2 & 0 & & & & & \\
                    0 & -y & 0 & -x & -y^2 & & & & &
                \end{pNiceArray} \mright\rparen
            $,} \\
    &\M_{5} \coloneqq \mbox{\footnotesize $\mleft\lparen
        \begin{pNiceArray}{cccc:cccc}[margin] 
            \Block{4-4}<\large \boldmath>{z} & & & & xy & -xy - y^2 & -x^2 & 0 \\
            & & & & -y^3 & 0 & 0 & -x \\
            & & & & x^2 & 0 & 0 & -xy - y^2 \\
            & & & & 0 & x & -y^3 & -y \\
            \hdottedline
            0 & xy + y^2 & -x & 0 & \Block{4-4}<\large \boldmath>{z} & & & \\
            y^3 & xy & 0 & -x^2 & & & & \\
            x & 0 & -y & xy + y^2 & & & & \\
            0 & x^2 & y^3 & 0 & & & &
        \end{pNiceArray} \mright.$,} \\
            \tag*{\footnotesize $
                \mleft. \begin{pNiceArray}{cccc:cccc}[margin]
                    \Block{4-4}<\large \boldmath>{z} & & & & -xy & xy + y^2 & x^2 & 0 \\
                    & & & & y^3 & 0 & 0 & x \\
                    & & & & -x^2 & 0 & 0 & xy + y^2 \\
                    & & & & 0 & -x & y^3 & y \\
                    \hdottedline
                    0 & -xy - y^2 & x & 0 & \Block{4-4}<\large \boldmath>{z} & & & \\
                    -y^3 & -xy & 0 & x^2 & & & & \\
                    -x & 0 & y & -xy - y^2 & & & & \\
                    0 & -x^2 & -y^3 & 0 & & & &
                \end{pNiceArray} \mright\rparen
            $,} \\
    &\M_{6} \coloneqq \mbox{\footnotesize $\expar{
        \begin{pNiceArray}{ccc:ccc}[margin]
            \Block{3-3}<\normalsize \boldmath>{z} & & & -x^2 & -xy^3 - y^4 & xy^3 \\
            & & & xy & -x^2 & -y^4 \\
            & & & -y^2 & xy & -x^2 - y^4 \\
            \hdottedline
            x & -y^3 & y^3 & \Block{3-3}<\normalsize \boldmath>{z} & & \\
            y & x & 0 & & & \\
            0 & y & x & & &
        \end{pNiceArray},
        \begin{pNiceArray}{ccc:ccc}[margin]
            \Block{3-3}<\normalsize \boldmath>{z} & & & x^2 & xy^3 + y^4 & -xy^3 \\
            & & & -xy & x^2 & y^4 \\
            & & & y^2 & -xy & x^2 + y^4 \\
            \hdottedline
            -x & y^3 & -y^3 & \Block{3-3}<\normalsize \boldmath>{z} & & \\
            -y & -x & 0 & & & \\
            0 & -y & -x & & &
        \end{pNiceArray}
    }$}, \\
    &\M_{7} \coloneqq \mbox{\footnotesize $\expar{
        \begin{pNiceArray}{cc:cc}[margin]
            \Block{2-2}<\small \boldmath>{z} & & -x^2 & -xy^3 - y^4 \\
            & & -y & x \\
            \hdottedline
            x & xy^3 + y^4 & \Block{2-2}<\small \boldmath>{z} & \\
            y & -x^2 & &
        \end{pNiceArray},
        \begin{pNiceArray}{cc:cc}[margin]
            \Block{2-2}<\small \boldmath>{z} & & x^2 & xy^3 + y^4 \\
            & & y & -x \\
            \hdottedline
            -x & -xy^3 - y^4 & \Block{2-2}<\small \boldmath>{z} & \\
            -y & x^2 & &
        \end{pNiceArray}
    }$}, \\
    &\M_{8} \coloneqq \mbox{\footnotesize $\expar{
        \begin{pNiceArray}{ccc:ccc}[margin]
            \Block{3-3}<\normalsize \boldmath>{z} & & & -x^2 & xy^2 & -xy^3 - y^4 \\
            & & &  -y^3 & -x^2 - y^4 & xy^2 \\
            & & & xy & -y^3 & -x^2 \\
            \hdottedline
            x & y^2 & -y^3 & \Block{3-3}<\normalsize \boldmath>{z} & & \\
            0 & x & y^2 & & & \\
            y & 0 & x & & &
        \end{pNiceArray},
        \begin{pNiceArray}{ccc:ccc}[margin]
            \Block{3-3}<\normalsize \boldmath>{z} & & & x^2 & -xy^2 & xy^3 + y^4 \\
            & & &  y^3 & x^2 + y^4 & -xy^2 \\
            & & & -xy & y^3 & x^2 \\
            \hdottedline
            -x & -y^2 & y^3 & \Block{3-3}<\normalsize \boldmath>{z} & & \\
            0 & -x & -y^2 & & & \\
            -y & 0 & -x & & &
        \end{pNiceArray}
    }$}. \\
\end{align*}

\subsection{\texorpdfstring{$E_{8}^1$ in characteristic $3$: $f = z^2 + x^3 + y^5 + x^2y^3$}{E81InChar3}}
\label{subsec:E_8^1InChar3}
\begin{align*}
    &\M_{1} \coloneqq \mbox{\footnotesize $\expar{
        \begin{pNiceArray}{cc:cc}[margin]
            \Block{2-2}<\small  \boldmath>{z} & & - x^2 - xy^3 & y^2 \\
            & & -y^3 & -x \\
            \hdottedline
            x & y^2 & \Block{2-2}<\small  \boldmath>{z} & \\
            -y^3 & x^2 + xy^3 & &
        \end{pNiceArray},
        \begin{pNiceArray}{cc:cc}[margin]
            \Block{2-2}<\small  \boldmath>{z} & &  x^2 + xy^3 & -y^2 \\
            & & y^3 & x \\
            \hdottedline
            -x & -y^2 & \Block{2-2}<\small  \boldmath>{z} & \\
            y^3 & - x^2 - xy^3 & &
        \end{pNiceArray}
    }$}, \\
    &\M_{2} \coloneqq \mbox{\footnotesize $\mleft\lparen
        \begin{pNiceArray}{cccc:cccc}[margin] 
            \Block{4-4}<\large  \boldmath>{z} & & & & 0 & -y^3 & -x^2 - xy^3 & -y^4 \\
            & & & & y^2 & xy^2 & xy & -x^2 \\
            & & & & 0 & x & -y^2 & xy \\
            & & & & -x & y^2 & 0 & 0\\
            \hdottedline
            0 & -y^3 & -xy^2 & x^2 + xy^3 & \Block{4-4}<\large  \boldmath>{z} & & & \\
            0 & -xy & -x^2 & -y^3 & & & & \\
            x & 0 & y^3 & 0 & & & & \\
            y & x & -xy^2 & y^2 & & & &
        \end{pNiceArray} \mright.$,} \\
            \tag*{\footnotesize $
                \mleft. \begin{pNiceArray}{cccc:cccc}[margin]
                    \Block{4-4}<\large  \boldmath>{z} & & & & 0 & y^3 & x^2 + xy^3 & y^4 \\
                    & & & & -y^2 & -xy^2 & -xy & x^2 \\
                    & & & & 0 & -x & y^2 & -xy \\
                    & & & & x & -y^2 & 0 & 0\\
                    \hdottedline
                    0 & y^3 & xy^2 & -x^2 - xy^3 & \Block{4-4}<\large  \boldmath>{z} & & & \\
                    0 & xy & x^2 & y^3 & & & & \\
                    -x & 0 & -y^3 & 0 & & & & \\
                    -y & -x & xy^2 & -y^2 & & & &
                \end{pNiceArray} \mright\rparen
            $,} \\
    &\M_{3} \coloneqq \mbox{\footnotesize $\mleft\lparen
        \begin{pNiceArray}{cccccc:cccccc}[margin] 
            \Block{6-6}<\LARGE  \boldmath>{z} & & & & & & -xy & 0 & 0 & -x^2 & 0 & -y^4 \\
            & & & & & & 0 & -xy & y^3 & 0 & - x^2 - xy^3 & 0 \\
            & & & & & & 0 & 0 & -xy & -xy & -y^3 & x^2 \\
            & & & & & & -x & -y^2 & 0 & xy^2 & 0 & 0 \\
            & & & & & & 0 & -x & 0 & -y^2 & 0 & xy \\
            & & & & & & y & 0 & -x & 0 & -y^2 & -xy^2 \\
            \hdottedline
            xy^2 & 0 & y^3 & x^2 & -xy^2 & -y^4 & \Block{6-6}<\LARGE  \boldmath>{z} & & & & & \\
            0 & 0 & -xy & y^3 & x^2 + xy^3 & xy^2 & & & & & & \\
            0 & -y^2 & xy^2 & xy & 0 & x^2 & & & & & & \\
            x & 0 & 0 & -xy & y^3 & 0 & & & & & & \\
            0 & x & y^2 & 0 & -xy & 0 & & & & & & \\
            y & 0 & -x & 0 & 0 & xy & & & & & &
        \end{pNiceArray} \mright.$,} \\
            \tag*{\footnotesize $
                \mleft. \begin{pNiceArray}{cccccc:cccccc}[margin]
                    \Block{6-6}<\LARGE  \boldmath>{z} & & & & & & xy & 0 & 0 & x^2 & 0 & y^4 \\
                    & & & & & & 0 & xy & -y^3 & 0 & x^2 + xy^3 & 0 \\
                    & & & & & & 0 & 0 & xy & xy & y^3 & -x^2 \\
                    & & & & & & x & y^2 & 0 & -xy^2 & 0 & 0 \\
                    & & & & & & 0 & x & 0 & y^2 & 0 & -xy \\
                    & & & & & & -y & 0 & x & 0 & y^2 & xy^2 \\
                    \hdottedline
                    -xy^2 & 0 & -y^3 & -x^2 & xy^2 & y^4 & \Block{6-6}<\LARGE  \boldmath>{z} & & & & & \\
                    0 & 0 & xy & -y^3 & -x^2 - xy^3 & -xy^2 & & & & & & \\
                    0 & y^2 & -xy^2 & -xy & 0 & -x^2 & & & & & & \\
                    -x & 0 & 0 & xy & -y^3 & 0 & & & & & & \\
                    0 & -x & -y^2 & 0 & xy & 0 & & & & & & \\
                    -y & 0 & x & 0 & 0 & -xy & & & & & &
                \end{pNiceArray} \mright\rparen
            $,} \\
    &\M_{4} \coloneqq \mbox{\footnotesize $\mleft\lparen
        \begin{pNiceArray}{ccccc:ccccc}[margin] 
            \Block{5-5}<\Large \boldmath>{z} & & & & & -xy & x^2 & 0 & -x^2y^2 - y^4 & 0 \\
            & & & & & 0 & -xy^2 & -x^2 - xy^3 & -x^2y & -y^4 \\
            & & & & & y^2 & -xy & 0 & -x^2 & 0 \\
            & & & & & -xy^2 & -y^3 & -xy & -xy^2 & x^2 \\
            & & & & & -x & 0 & y^2 & 0 & -xy \\
            \hdottedline
            0 & 0 & -y^3 & xy & x^2 & \Block{5-5}<\Large \boldmath>{z} & & & & \\
            -x & 0 & xy^2 & y^2 & xy & & & & & \\
            0 & x & -xy & 0 & -y^3 & & & & & \\
            y & 0 & x & 0 & 0 & & & & & \\
            0 & y & 0 & -x & xy^2 & & & & &
        \end{pNiceArray} \mright.$,} \\
            \tag*{\footnotesize $
                \mleft. \begin{pNiceArray}{ccccc:ccccc}[margin]
                    \Block{5-5}<\Large \boldmath>{z} & & & & & xy & -x^2 & 0 & x^2y^2 + y^4 & 0 \\
                    & & & & & 0 & xy^2 & x^2 + xy^3 & x^2y & y^4 \\
                    & & & & & -y^2 & xy & 0 & x^2 & 0 \\
                    & & & & & xy^2 & y^3 & xy & xy^2 & -x^2 \\
                    & & & & & x & 0 & -y^2 & 0 & xy \\
                    \hdottedline
                    0 & 0 & y^3 & -xy & -x^2 & \Block{5-5}<\Large \boldmath>{z} & & & & \\
                    x & 0 & -xy^2 & -y^2 & -xy & & & & & \\
                    0 & -x & xy & 0 & y^3 & & & & & \\
                    -y & 0 & -x & 0 & 0 & & & & & \\
                    0 & -y & 0 & x & -xy^2 & & & & &
                \end{pNiceArray} \mright\rparen
            $,} \\
    &\M_{5} \coloneqq \mbox{\footnotesize $\mleft\lparen
        \begin{pNiceArray}{cccc:cccc}[margin] 
            \Block{4-4}<\large \boldmath>{z} & & & & xy & y^3 & x^2 & 0 \\
            & & & & y^2 & -xy^2 & xy & -x \\
            & & & & x^2 & xy^2 & -x^2y^2 - y^4 & 0 \\
            & & & & 0 & x & 0 & -y \\
            \hdottedline
            -xy^2 & -y^3 & -x & xy^2 & \Block{4-4}<\large \boldmath>{z} & & & \\
            -y^2 & xy & 0 & -x^2 & & & & \\
            -x & 0 & y & 0 & & & & \\
            -xy & x^2 & 0 & x^2y^2 + y^4 & & & &
        \end{pNiceArray} \mright.$,} \\
            \tag*{\footnotesize $
                \mleft. \begin{pNiceArray}{cccc:cccc}[margin]
                    \Block{4-4}<\large \boldmath>{z} & & & & -xy & -y^3 & -x^2 & 0 \\
                    & & & & -y^2 & xy^2 & -xy & x \\
                    & & & & -x^2 & -xy^2 & x^2y^2 + y^4 & 0 \\
                    & & & & 0 & -x & 0 & y \\
                    \hdottedline
                    xy^2 & y^3 & x & -xy^2 & \Block{4-4}<\large \boldmath>{z} & & & \\
                    y^2 & -xy & 0 & x^2 & & & & \\
                    x & 0 & -y & 0 & & & & \\
                    xy & -x^2 & 0 & -x^2y^2 - y^4 & & & &
                \end{pNiceArray} \mright\rparen
            $,} \\
    &\M_{6} \coloneqq \mbox{\footnotesize $\expar{
        \begin{pNiceArray}{ccc:ccc}[margin]
            \Block{3-3}<\normalsize \boldmath>{z} & & & xy & x^2 & -y^4 \\
            & & & x^2 & -x^2y^2 - y^4 & -xy^3 \\
            & & & y^2 & xy & x^2 + xy^3 \\
            \hdottedline
            -xy^2 & -x & -y^3 & \Block{3-3}<\normalsize \boldmath>{z} & & \\
            -x & y & 0 & & & \\
            y & 0 & -x & & &
        \end{pNiceArray},
        \begin{pNiceArray}{ccc:ccc}[margin]
            \Block{3-3}<\normalsize \boldmath>{z} & & & -xy & -x^2 & y^4 \\
            & & & -x^2 & x^2y^2 + y^4 & xy^3 \\
            & & & -y^2 & -xy & -x^2 - xy^3 \\
            \hdottedline
            xy^2 & x & y^3 & \Block{3-3}<\normalsize \boldmath>{z} & & \\
            x & -y & 0 & & & \\
            -y & 0 & x & & &
        \end{pNiceArray}
    }$}, \\
    &\M_{7} \coloneqq \mbox{\footnotesize $\expar{
        \begin{pNiceArray}{cc:cc}[margin]
            \Block{2-2}<\small \boldmath>{z} & & -x^2 & y \\
            & & -x^2y^2 - y^4 & -x \\
            \hdottedline
            x & y & \Block{2-2}<\small \boldmath>{z} & \\
            -x^2y^2 - y^4 & x^2 & &
        \end{pNiceArray},
        \begin{pNiceArray}{cc:cc}[margin]
            \Block{2-2}<\small \boldmath>{z} & & x^2 & -y \\
            & & x^2y^2 + y^4 & x \\
            \hdottedline
            -x & -y & \Block{2-2}<\small \boldmath>{z} & \\
            x^2y^2 + y^4 & -x^2 & &
        \end{pNiceArray}
    }$}, \\
    &\M_{8} \coloneqq \mbox{\footnotesize $\expar{
        \begin{pNiceArray}{ccc:ccc}[margin]
            \Block{3-3}<\normalsize \boldmath>{z} & & & -x^2 & xy^2 & -y^4 \\
            & & & -x^2y - y^3 & -x^2 & xy^2 \\
            & & & xy & -y^3 & -x^2 - xy^3 \\
            \hdottedline
            x & y^2 & 0 & \Block{3-3}<\normalsize \boldmath>{z} & & \\
            -xy & x & y^2 & & & \\
            y & 0 & x & & &
        \end{pNiceArray},
        \begin{pNiceArray}{ccc:ccc}[margin]
            \Block{3-3}<\normalsize \boldmath>{z} & & & x^2 & -xy^2 & y^4 \\
            & & & x^2y + y^3 & x^2 & -xy^2 \\
            & & & -xy & y^3 & x^2 + xy^3 \\
            \hdottedline
            -x & -y^2 & 0 & \Block{3-3}<\normalsize \boldmath>{z} & & \\
            xy & -x & -y^2 & & & \\
            -y & 0 & -x & & &
        \end{pNiceArray}
    }$}. \\
\end{align*}

\subsection{\texorpdfstring{$E_{8}^2$ in characteristic $3$: $f = z^2 + x^3 + y^5 + x^2y^2$}{E82InChar3}}
\label{subsec:E_8^2InChar3}
\begin{align*}
    &\M_{1} \coloneqq \mbox{\footnotesize $\expar{
        \begin{pNiceArray}{cc:cc}[margin]
            \Block{2-2}<\small \boldmath>{z} & & -x^2 - xy^2 & y^2 \\
            & & -y^3 & -x \\
            \hdottedline
            x & y^2 & \Block{2-2}<\small \boldmath>{z} & \\
            -y^3 & x^2 + xy^2 & &
        \end{pNiceArray},
        \begin{pNiceArray}{cc:cc}[margin]
            \Block{2-2}<\small \boldmath>{z} & & x^2 + xy^2 & -y^2 \\
            & & y^3 & x \\
            \hdottedline
            -x & -y^2 & \Block{2-2}<\small \boldmath>{z} & \\
            y^3 & -x^2 - xy^2 & &
        \end{pNiceArray}
    }$}, \\
    &\M_{2} \coloneqq \mbox{\footnotesize $\mleft\lparen
        \begin{pNiceArray}{cccc:cccc}[margin] 
            \Block{4-4}<\large \boldmath>{z} & & & & 0 & -y^3 & - x^2 - xy^2 & -y^4 \\
            & & & & y^2 & xy & xy & -x^2 \\
            & & & & -x & y^2 & 0 & 0 \\
            & & & & 0 & x & -y^2 & xy \\
            \hdottedline
            0 & -y^3 & x^2 + xy^2 & -xy^2 & \Block{4-4}<\large \boldmath>{z} & & & \\
            0 & -xy & -y^3 & -x^2 & & & & \\
            x & 0 & 0 & y^3 & & & & \\
            y & x & y^2 & -xy & & & &
        \end{pNiceArray} \mright.$,} \\
            \tag*{\footnotesize $
                \mleft. \begin{pNiceArray}{cccc:cccc}[margin]
                    \Block{4-4}<\large \boldmath>{z} & & & & 0 & y^3 & x^2 + xy^2 & y^4 \\
                    & & & & -y^2 & -xy & -xy & x^2 \\
                    & & & & x & -y^2 & 0 & 0 \\
                    & & & & 0 & -x & y^2 & -xy \\
                    \hdottedline
                    0 & y^3 & -x^2 - xy^2 & xy^2 & \Block{4-4}<\large \boldmath>{z} & & & \\
                    0 & xy & y^3 & x^2 & & & & \\
                    -x & 0 & 0 & -y^3 & & & & \\
                    -y & -x & -y^2 & xy & & & &
                \end{pNiceArray} \mright\rparen
            $,} \\
    &\M_{3} \coloneqq \mbox{\footnotesize $\mleft\lparen
            \begin{pNiceArray}{cccccc:cccccc}[margin]
                \Block{6-6}<\LARGE \boldmath>{z} & & & & & & 0 & x^2 + y^3 & 0 & -x^2 & 0 & 0 \\
                & & & & & & 0 & 0 & y^3 & x^2 + y^3 & -x^2 - xy^2 & -xy^2 \\
                & & & & & & -y^2 & -xy & 0 & -xy & 0 & x^2 \\
                & & & & & & -x & -y^2 & 0 & -xy & 0 & -x^2 - y^3 \\
                & & & & & & 0 & -x & 0 & -y^2 & 0 & 0 \\
                & & & & & & y & y^2 & -x & -xy & -y^2 & 0 \\
                \hdottedline
                xy & 0 & x^2 + y^3 & x^2 & -xy^2 & 0 & \Block{6-6}<\LARGE \boldmath>{z} & & & & & \\
                -y^2 & 0 & 0 & 0 & x^2 & 0 & & & & & & \\
                -xy & -y^2 & xy & xy & -x^2y - y^3 & x^2 + xy^2 & & & & & & \\
                x & 0 & 0 & 0 & x^2 + y^3 & 0 & & & & & & \\
                x - y^2 & x & y^2 & 0 & x^2 - xy^2 & y^3 & & & & & & \\
                y & 0 & -x & y^2 & -xy & 0 & & & & & &
            \end{pNiceArray} \mright.$,} \\
            \tag*{\footnotesize $
                \mleft. \begin{pNiceArray}{cccccc:cccccc}[margin]
                    \Block{6-6}<\LARGE \boldmath>{z} & & & & & & 0 & -x^2 - y^3 & 0 & x^2 & 0 & 0 \\
                    & & & & & & 0 & 0 & -y^3 & -x^2 - y^3 & x^2 + xy^2 & xy^2 \\
                    & & & & & & y^2 & xy & 0 & xy & 0 & -x^2 \\
                    & & & & & & x & y^2 & 0 & xy & 0 & x^2 + y^3 \\
                    & & & & & & 0 & x & 0 & y^2 & 0 & 0 \\
                    & & & & & & -y & -y^2 & x & xy & y^2 & 0 \\
                    \hdottedline
                    -xy & 0 & -x^2 - y^3 & -x^2 & xy^2 & 0 & \Block{6-6}<\LARGE \boldmath>{z} & & & & & \\
                    y^2 & 0 & 0 & 0 & -x^2 & 0 & & & & & & \\
                    xy & y^2 & -xy & -xy & x^2y + y^3 & -x^2 - xy^2 & & & & & & \\
                    -x & 0 & 0 & 0 & -x^2 - y^3 & 0 & & & & & & \\
                    -x + y^2 & -x & -y^2 & 0 & -x^2 + xy^2 & -y^3 & & & & & & \\
                    -y & 0 & x & -y^2 & xy & 0 & & & & & &
                \end{pNiceArray} \mright\rparen
            $,}\\
    &\M_{4} \coloneqq \mleft\lparen
        \begin{pNiceArray}{ccccc:ccccc}[margin] 
            \Block{5-5}<\Large \boldmath>{z} & & & & & 0 & -x^2 - xy^2 & -y^3 & 0 & xy^2 \\
            & & & & & y^3 & 0 & -x^2 & -x^2y & -x^2y \\
            & & & & & -y^2 & xy & 0 & x^2 & -y^3 \\
            & & & & & 0 & y^3 & -xy & 0 & x^2 \\
            & & & & & x + y^2 & 0 & y^2 & y^3 & y^3 \\
            \hdottedline
            0 & -y^2 & 0 & 0 & -x^2 & \Block{5-5}<\Large \boldmath>{z} & & & & \\
            x & 0 & 0 & -y^2 & 0 & & & & & \\
            y^2 & x & xy & xy & 0 & & & & & \\
            -y & 0 & -x - y^2 & 0 & -y^2 & & & & & \\
            0 & y & y^2 & -x & 0 & & & & &
        \end{pNiceArray} \mright., \\
            \tag*{$
                \mleft. \begin{pNiceArray}{ccccc:ccccc}[margin]
                    \Block{5-5}<\Large \boldmath>{z} & & & & & 0 & x^2 + xy^2 & y^3 & 0 & -xy^2 \\
                    & & & & & -y^3 & 0 & x^2 & x^2y & x^2y \\
                    & & & & & y^2 & -xy & 0 & -x^2 & y^3 \\
                    & & & & & 0 & -y^3 & xy & 0 & -x^2 \\
                    & & & & & -x - y^2 & 0 & -y^2 & -y^3 & -y^3 \\
                    \hdottedline
                    0 & y^2 & 0 & 0 & x^2 & \Block{5-5}<\Large \boldmath>{z} & & & & \\
                    -x & 0 & 0 & y^2 & 0 & & & & & \\
                    -y^2 & -x & -xy & -xy & 0 & & & & & \\
                    y & 0 & x + y^2 & 0 & y^2 & & & & & \\
                    0 & -y & -y^2 & x & 0 & & & & &
                \end{pNiceArray} \mright\rparen
            $,} \\
    &\M_{5} \coloneqq \mleft\lparen
        \begin{pNiceArray}{cccc:cccc}[margin] 
            \Block{4-4}<\large \boldmath>{z} & & & & 0 & x & 0 & -y \\
            & & & & y^2 & xy & xy & x \\
            & & & & x^2 & 0 & -x^2y - y^4 & -y^3 \\
            & & & & xy & -y^3 & x^2 & 0 \\
            \hdottedline
            0 & -y^3 & -x & -xy & \Block{4-4}<\large \boldmath>{z} & & & \\
            -x^2 & -xy & 0 & y^2 & & & & \\
            -y^3 & 0 & y & -x & & & & \\
            x^2y + y^4 & -x^2 & 0 & xy & & & &
        \end{pNiceArray} \mright., \\
            \tag*{$
                \mleft. \begin{pNiceArray}{cccc:cccc}[margin]
                    \Block{4-4}<\large \boldmath>{z} & & & & 0 & -x & 0 & y \\
                    & & & & -y^2 & -xy & -xy & -x \\
                    & & & & -x^2 & 0 & x^2y + y^4 & y^3 \\
                    & & & & -xy & y^3 & -x^2 & 0 \\
                    \hdottedline
                    0 & y^3 & x & xy & \Block{4-4}<\large \boldmath>{z} & & & \\
                    x^2 & xy & 0 & -y^2 & & & & \\
                    y^3 & 0 & -y & x & & & & \\
                    -x^2y - y^4 & x^2 & 0 & -xy & & & &
                \end{pNiceArray} \mright\rparen
            $,} \\
    &\M_{6} \coloneqq \mbox{\footnotesize $\expar{
        \begin{pNiceArray}{ccc:ccc}[margin]
            \Block{3-3}<\normalsize \boldmath>{z} & & & x^2 & -x^2y - y^4 & -xy^3 \\
            & & & xy & x^2 & -y^4 \\
            & & & y^2 & xy & x^2 + xy^2 \\
            \hdottedline
            -x & -xy & -y^3 & \Block{3-3}<\normalsize \boldmath>{z} & & \\
            y & -x & 0 & & & \\
            0 & y & -x & & &
        \end{pNiceArray},
        \begin{pNiceArray}{ccc:ccc}[margin]
            \Block{3-3}<\normalsize \boldmath>{z} & & & -x^2 & x^2y + y^4 & xy^3 \\
            & & & -xy & -x^2 & y^4 \\
            & & & -y^2 & -xy & -x^2 - xy^2 \\
            \hdottedline
            x & xy & y^3 & \Block{3-3}<\normalsize \boldmath>{z} & & \\
            -y & x & 0 & & & \\
            0 & -y & x & & &
        \end{pNiceArray}
    }$}, \\
    &\M_{7} \coloneqq \mbox{\footnotesize $\expar{
        \begin{pNiceArray}{cc:cc}[margin]
            \Block{2-2}<\small \boldmath>{z} & & -x^2 & y \\
            & & -x^2y - y^4 & -x \\
            \hdottedline
            x & y & \Block{2-2}<\small \boldmath>{z} & \\
            -x^2y - y^4 & x^2 & &
        \end{pNiceArray},
        \begin{pNiceArray}{cc:cc}[margin]
            \Block{2-2}<\small \boldmath>{z} & & x^2 & -y \\
            & & x^2y + y^4 & x \\
            \hdottedline
            -x & -y & \Block{2-2}<\small \boldmath>{z} & \\
            x^2y + y^4 & -x^2 & &
        \end{pNiceArray}
    }$}, \\
    &\M_{8} \coloneqq \mbox{\footnotesize $\expar{
        \begin{pNiceArray}{ccc:ccc}[margin]
            \Block{3-3}<\normalsize \boldmath>{z} & & & -x^2 & xy^2 & -y^4 \\
            & & & -x^2 - y^3 & -x^2 & xy^2 \\
            & & & xy & -y^3 & -x^2 - xy^2 \\
            \hdottedline
            x & y^2 & 0 & \Block{3-3}<\normalsize \boldmath>{z} & & \\
            -x & x & y^2 & & & \\
            y & 0 & x & & &
        \end{pNiceArray},
        \begin{pNiceArray}{ccc:ccc}[margin]
            \Block{3-3}<\normalsize \boldmath>{z} & & & x^2 & -xy^2 & y^4 \\
            & & & x^2 + y^3 & x^2 & -xy^2 \\
            & & & -xy & y^3 & x^2 + xy^2 \\
            \hdottedline
            -x & -y^2 & 0 & \Block{3-3}<\normalsize \boldmath>{z} & & \\
            x & -x & -y^2 & & & \\
            -y & 0 & -x & & &
        \end{pNiceArray}
    }$}. \\
\end{align*}

\subsection{\texorpdfstring{$E_{8}^r$ in characteristic $2$: $f = z^2 + x^3 + y^5 + zg$}{E8rInChar2}}
\label{subsec:E_8^rInChar2}
Here, 
\[ 
    g \coloneqq
    \begin{cases}
        0 & \text{if $r = 0$,} \\
        xy^3 & \text{if $r = 1$,} \\
        xy^2 & \text{if $r = 2$,} \\
        y^3 & \text{if $r = 3$,} \\
        xy & \text{if $r = 4$.}
    \end{cases}
\]

\begin{align*}
    &\M_{1} \coloneqq \mbox{\footnotesize $\expar{
        \begin{pNiceArray}{cc:w{c}{2em}w{c}{1em}}[margin]
            \Block{2-2}<\small \boldmath>{z} & & y^3 & x^2 \\
            & & x & y^2 \\
            \hdottedline
            y^2 & x^2 & \Block{2-2}<\small \boldmath>{z + g} & \\
            x & y^3 & &
        \end{pNiceArray},
        \begin{pNiceArray}{w{c}{1em}w{c}{2em}:cc}[margin]
            \Block{2-2}<\small \boldmath>{z + g} & & y^3 & x^2 \\
            & & x & y^2 \\
            \hdottedline
            y^2 & x^2 & \Block{2-2}<\small \boldmath>{z} & \\
            x & y^3 & &
        \end{pNiceArray}
    }$}, \\
    &\M_{2} \coloneqq \mbox{\footnotesize $\expar{
        \begin{pNiceArray}{cccc:cccc}[margin] 
            \Block{4-4}<\large \boldmath>{z} & & & & 0 & y^3 & x^2 & 0 \\
            & & & & y^2 & 0 & xy & x^2 \\
            & & & & x & y^2 & 0 & y^3 \\
            & & & & 0 & x & y^2 & 0 \\
            \hdottedline
            0 & y^3 & x^2 & xy^2 & \Block{4-4}<\large \boldmath>{z + g} & & & \\
            y^2 & 0 & 0 & x^2 & & & & \\
            x & 0 & 0 & y^3 & & & & \\
            y & x & y^2 & 0 & & & &
        \end{pNiceArray},
        \begin{pNiceArray}{cccc:cccc}[margin]
            \Block{4-4}<\large \boldmath>{z + g} & & & & 0 & y^3 & x^2 & 0 \\
            & & & & y^2 & 0 & xy & x^2 \\
            & & & & x & y^2 & 0 & y^3 \\
            & & & & 0 & x & y^2 & 0 \\
            \hdottedline
            0 & y^3 & x^2 & xy^2 & \Block{4-4}<\large \boldmath>{z} & & & \\
            y^2 & 0 & 0 & x^2 & & & & \\
            x & 0 & 0 & y^3 & & & & \\
            y & x & y^2 & 0 & & & &
        \end{pNiceArray}
    }$}, \\
    &\M_{3} \coloneqq \mbox{\footnotesize $\mleft\lparen
        \begin{pNiceArray}{cccccc:cccccc}[margin] 
            \Block{6-6}<\LARGE \boldmath>{z} & & & & & & 0 & 0 & 0 & x^2 & xy^2 & y^4 \\
            & & & & & & 0 & 0 & 0 & y^3 & x^2 & xy^2 \\
            & & & & & & 0 & 0 & 0 & xy & y^3 & x^2 \\
            & & & & & & x & y^2 & 0 & 0 & 0 & y^3 \\
            & & & & & & 0 & x & y^2 & y^2 & 0 & 0 \\
            & & & & & & y & 0 & x & 0 & y^2 & 0 \\
            \hdottedline
            0 & 0 & y^3 & x^2 & xy^2 & y^4 & \Block{6-6}<\LARGE \boldmath>{z + g} & & & & & \\
            y^2 & 0 & 0 & y^3 & x^2 & xy^2 & & & & & & \\
            0 & y^2 & 0 & xy & y^3 & x^2 & & & & & & \\
            x & y^2 & 0 & 0 & 0 & 0 & & & & & & \\
            0 & x & y^2 & 0 & 0 & 0 & & & & & & \\
            y & 0 & x & 0 & 0 & 0 & & & & & &
        \end{pNiceArray} \mright.$,} \\
            \tag*{\footnotesize $
                \mleft. \begin{pNiceArray}{cccccc:cccccc}[margin]
                    \Block{6-6}<\LARGE \boldmath>{z + g} & & & & & & 0 & 0 & 0 & x^2 & xy^2 & y^4 \\
                    & & & & & & 0 & 0 & 0 & y^3 & x^2 & xy^2 \\
                    & & & & & & 0 & 0 & 0 & xy & y^3 & x^2 \\
                    & & & & & & x & y^2 & 0 & 0 & 0 & y^3 \\
                    & & & & & & 0 & x & y^2 & y^2 & 0 & 0 \\
                    & & & & & & y & 0 & x & 0 & y^2 & 0 \\
                    \hdottedline
                    0 & 0 & y^3 & x^2 & xy^2 & y^4 & \Block{6-6}<\LARGE \boldmath>{z} & & & & & \\
                    y^2 & 0 & 0 & y^3 & x^2 & xy^2 & & & & & & \\
                    0 & y^2 & 0 & xy & y^3 & x^2 & & & & & & \\
                    x & y^2 & 0 & 0 & 0 & 0 & & & & & & \\
                    0 & x & y^2 & 0 & 0 & 0 & & & & & & \\
                    y & 0 & x & 0 & 0 & 0 & & & & & &
                \end{pNiceArray} \mright\rparen
            $,} \\
    &\M_{4} \coloneqq \mbox{\footnotesize $\expar{
        \begin{pNiceArray}{ccccc:ccccc}[margin] 
            \Block{5-5}<\Large \boldmath>{z} & & & & & y^3 & x^2 & 0 & 0 & 0 \\
            & & & & & 0 & y^3 & x^2 & xy^2 & y^4 \\
            & & & & & 0 & xy & y^3 & x^2 & xy^2 \\
            & & & & & y^2 & 0 & xy & y^3 & x^2 \\
            & & & & & x & y^2 & 0 & 0 & 0 \\
            \hdottedline
            y^2 & 0 & 0 & 0 & x^2 & \Block{5-5}<\Large \boldmath>{z + g} & & & & \\
            x & 0 & 0 & 0 & y^3 & & & & & \\
            0 & x & y^2 & 0 & 0 & & & & & \\
            y & 0 & x & y^2 & 0 & & & & & \\
            0 & y & 0 & x & y^2 & & & & &
        \end{pNiceArray},
        \begin{pNiceArray}{ccccc:ccccc}[margin]
            \Block{5-5}<\Large \boldmath>{z + g} & & & & & y^3 & x^2 & 0 & 0 & 0 \\
            & & & & & 0 & y^3 & x^2 & xy^2 & y^4 \\
            & & & & & 0 & xy & y^3 & x^2 & xy^2 \\
            & & & & & y^2 & 0 & xy & y^3 & x^2 \\
            & & & & & x & y^2 & 0 & 0 & 0 \\
            \hdottedline
            y^2 & 0 & 0 & 0 & x^2 & \Block{5-5}<\Large \boldmath>{z} & & & & \\
            x & 0 & 0 & 0 & y^3 & & & & & \\
            0 & x & y^2 & 0 & 0 & & & & & \\
            y & 0 & x & y^2 & 0 & & & & & \\
            0 & y & 0 & x & y^2 & & & & &
        \end{pNiceArray}
    }$}, \\
    &\M_{5} \coloneqq \mbox{\footnotesize $\expar{
        \begin{pNiceArray}{cccc:cccc}[margin] 
            \Block{4-4}<\large \boldmath>{z} & & & & xy & y^2 & x^2 & 0 \\
            & & & & y^3 & 0 & 0 & x \\
            & & & & x^2 & 0 & 0 & y^2 \\
            & & & & 0 & x & y^3 & y \\
            \hdottedline
            0 & y^2 & x & 0 & \Block{4-4}<\large \boldmath>{z + g} & & & \\
            y^3 & xy & 0 & x^2 & & & & \\
            x & 0 & y & y^2 & & & & \\
            0 & x^2 & y^3 & 0 & & & &
        \end{pNiceArray},
        \begin{pNiceArray}{cccc:cccc}[margin]
            \Block{4-4}<\large \boldmath>{z + g} & & & & xy & y^2 & x^2 & 0 \\
            & & & & y^3 & 0 & 0 & x \\
            & & & & x^2 & 0 & 0 & y^2 \\
            & & & & 0 & x & y^3 & y \\
            \hdottedline
            0 & y^2 & x & 0 & \Block{4-4}<\large \boldmath>{z} & & & \\
            y^3 & xy & 0 & x^2 & & & & \\
            x & 0 & y & y^2 & & & & \\
            0 & x^2 & y^3 & 0 & & & &
        \end{pNiceArray}
    }$}, \\
    &\M_{6} \coloneqq \mbox{\footnotesize $\expar{
        \begin{pNiceArray}{ccc:ccc}[margin]
            \Block{3-3}<\normalsize \boldmath>{z} & & & x^2 & y^4 & xy^3 \\
            & & & xy & x^2 & y^4 \\
            & & & y^2 & xy & x^2 \\
            \hdottedline
            x & 0 & y^3 & \Block{3-3}<\normalsize \boldmath>{z + g} & & \\
            y & x & 0 & & & \\
            0 & y & x & & &
        \end{pNiceArray},
        \begin{pNiceArray}{ccc:ccc}[margin]
            \Block{3-3}<\normalsize \boldmath>{z + g} & & & x^2 & y^4 & xy^3 \\
            & & & xy & x^2 & y^4 \\
            & & & y^2 & xy & x^2 \\
            \hdottedline
            x & 0 & y^3 & \Block{3-3}<\normalsize \boldmath>{z} & & \\
            y & x & 0 & & & \\
            0 & y & x & & &
        \end{pNiceArray}
    }$}, \\
    &\M_{7} \coloneqq \mbox{\footnotesize $\expar{
        \begin{pNiceArray}{cc:w{c}{2em}w{c}{1em}}[margin]
            \Block{2-2}<\small \boldmath>{z} & & x^2 & y^4 \\
            & & y & x \\
            \hdottedline
            x & y^4 & \Block{2-2}<\small \boldmath>{z + g} & \\
            y & x^2 & &
        \end{pNiceArray},
        \begin{pNiceArray}{w{c}{1em}w{c}{2em}:cc}[margin]
            \Block{2-2}<\small \boldmath>{z + g} & & x^2 & y^4 \\
            & & y & x \\
            \hdottedline
            x & y^4 & \Block{2-2}<\small \boldmath>{z} & \\
            y & x^2 & &
        \end{pNiceArray}
    }$}, \\
    &\M_{8} \coloneqq \mbox{\footnotesize $\expar{
        \begin{pNiceArray}{ccc:ccc}[margin]
            \Block{3-3}<\normalsize \boldmath>{z} & & & x^2 & xy^2 & y^4 \\
            & & & y^3 & x^2 & xy^2 \\
            & & & xy & y^3 & x^2 \\
            \hdottedline
            x & y^2 & 0 & \Block{3-3}<\normalsize \boldmath>{z + g} & & \\
            0 & x & y^2 & & & \\
            y & 0 & x & & &
        \end{pNiceArray},
        \begin{pNiceArray}{ccc:ccc}[margin]
            \Block{3-3}<\normalsize \boldmath>{z + g} & & & x^2 & xy^2 & y^4 \\
            & & & y^3 & x^2 & xy^2 \\
            & & & xy & y^3 & x^2 \\
            \hdottedline
            x & y^2 & 0 & \Block{3-3}<\normalsize \boldmath>{z} & & \\
            0 & x & y^2 & & & \\
            y & 0 & x & & &
        \end{pNiceArray}
    }$}. 
\end{align*}
\endgroup

\bibliographystyle{alphaurl}
\bibliography{References}
\end{document}